\theoremstyle{plain}
\newtheorem{definition}{Definition}[section]
\newtheorem{thm}[definition]{Theorem}
\newtheorem{lem}[definition]{Lemma}
\newtheorem{cor}[definition]{Corollary}
\newtheorem{rei}[definition]{Example}
\newcommand{\sgn}{{\rm sgn}}
\newcommand{\Mat}{{\rm M}}
\newcommand{\diag}{{\rm diag}}
\newcommand{\Sdet}{{\rm Sdet}}
\newcommand{\Mdet}{{\rm Mdet}}
\newcommand{\Ddet}{{\rm Ddet}}
\newcommand{\Det}{{\rm Det}}
\newcommand{\Hom}{{\rm Hom}}
\begin{document}
\title[Study-type determinants and their properties]{Study-type determinants and their properties}
\author[N. Yamaguchi]{Naoya Yamaguchi}
\date{\today}
\subjclass[2010]{Primary 16G99; Secondary 20C05; 15A15}
\keywords{noncommutative determinant; Study determinant; group determinant.}

\maketitle

\begin{abstract}
In this paper, we define the concept of the Study-type determinant, 
and we present some properties of these determinants. 
These properties lead to some properties of the Study determinant. 
The properties of the Study-type determinants are obtained using a commutative diagram. 
This diagram leads not only to these properties, 
but also to an inequality for the degrees of representations and to an extension of Dedekind's theorem. 
\end{abstract}

\section{Introduction}
Let $B$ be a commutative ring, 
let $A$ be a ring\footnote{Here rings are assumed to possess a multiplicative unit.} that is a free right $B$-module of rank $m$, 
and let $L$ be a left regular representation from $\Mat(r, A)$ to $\Mat(m, \Mat(r, B))$, 
where $\Mat(r, A)$ is the set of all $r \times r$ matrices with elements in $A$. 
We define the Study-type determinant $\Sdet_{\Mat(r, A)}^{B} : \Mat(r, A) \rightarrow B$ as 
$$
\Sdet_{\Mat(r, A)}^{B} := \Det_{\Mat(mr, B)}^{B} \circ \iota \circ L, 
$$
where $\iota$ is the inclusion map from $\Mat(m, \Mat(r, B))$ to $\Mat(mr, B)$. 
The Study-type determinant has the following properties. 

For all $a, a' \in \Mat(r, A)$, the following hold: 
\begin{enumerate}
\item[(S1)] $\Sdet_{\Mat(r, A)}^{B}(a a') = \Sdet_{\Mat(r, A)}^{B}(a) \: \Sdet_{\Mat(r, A)}^{B}(a')$; 
\item[(S2)] $a$ is invertible in $\Mat(r, A)$ if and only if $\Sdet_{\Mat(r, A)}^{B}(a)$ is invertible in $B$; 
\item[(S3)] if $a'$ is obtained from $a$ by adding a left-multiple of a row to another row or a right-multiple of a column to another column, 
then we have $\Sdet_{\Mat(r, A)}^{B}(a') = \Sdet_{\Mat(r, A)}^{B}(a)$; 
\item[(S4)] if there exists a basis $( e_{1} \: e_{2} \: \cdots \: e_{m} )$ of $A$ as a $B$-right module satisfying the following conditions: 
\begin{enumerate}
\item[(i)] $e_{i}$ is invertible in $A$ for any $e_{i}$;  
\item[(ii)] $e_{i}^{-1} B e_{i} \subset B$ holds for any $e_{i}$, 
\end{enumerate}
then we have $\Sdet_{\Mat(r, A)}^{B}(a) \in Z(A) \cap B$, where $Z(A)$ is the center of $A$. 
\end{enumerate}
In the following, we assume that $B$ is a free right $C$-module of rank $n$ and $C$ is a commutative ring. 
Then we have the following properties.\footnote{The set $e_{i}B$ and the product $*$ are explained in Section~$7$.}

For all $a \in \Mat(r, A)$, the following hold: 
\begin{enumerate}
\setcounter{enumi}{4}
\item[(S5)] $\Sdet_{\Mat(r, A)}^{C} = \Sdet_{\Mat(1, B)}^{C} \circ \Sdet_{\Mat(r, A)}^{B}$, where we regard $B$ as $\Mat(1, B)$; 
\item[(S6)] if there exists a basis of $A$ as a $B$-right module satisfying the conditions (i) and (ii), 
then we have $\Sdet_{\Mat(r, A)}^{C}(a) = \left( \Sdet_{\Mat(r, A)}^{B}(a) \right)^{n}$. 
\end{enumerate}

The Study-type determinant is a generalization of the Study determinant. 
The Study determinant was defined by Eduard Study~\cite{study1920}. 
Let $\mathbb{H}$ be the quaternion field. 
The Study determinant $\Sdet : \Mat(r, \mathbb{H}) \rightarrow \mathbb{C}$ is defined using a transformation from $\psi_{r} \colon \Mat(r, \mathbb{H}) \to \Mat(2r, \mathbb{C})$. 
It is known that this determinant has the following properties\footnote{The algebra homomorphisms $\phi_{2r}$ and $\psi_{r}$ are explained in Section~$9$.} (see, e.g., \cite{Aslaksen1996}). 

For all $a, a' \in \Mat(r, \mathbb{H})$, the following hold: 
\begin{enumerate}
\item[(S$1'$)] $\Sdet(a a') = \Sdet(a) \: \Sdet(a')$; 
\item[(S$2'$)] $a$ is invertible in $\Mat(r, \mathbb{H})$ if and only if $\Sdet(a) \in \mathbb{C}$ is invertible; 
\item[(S$3'$)] if $a'$ is obtained from $a$ by adding a left-multiple of a row to another row or a right-multiple of a column to another column, 
then we have $\Sdet(a') = \Sdet(a)$; 
\item[(S$4'$)] $\Sdet(a) \in Z(\mathbb{H}) = \mathbb{R}$; 
\item[(S$5'$)] $\left( \Det_{\Mat(4r, \mathbb{R})}^{\mathbb{R}} \circ \phi_{2r} \circ \psi_{r} \right)(a) = \Sdet{(a)}^{2}$. 
\end{enumerate}
The above properties can be derived from the properties of the Study-type determinants. 

Let $L_{2}$ and $L_{3}$ be left regular representations from $B$ to $\Mat(n, C)$ and from $\Mat(r, B)$ to $\Mat(n, \Mat(r, C))$, respectively. 
The following theorem plays an important role in ascertaining the properties of the Study-type determinants. 
\begin{thm}[see Theorem~$\ref{thm:4.4}$ for proof]\label{thm:1.1}
The following diagram is commutative: 
\[
\xymatrix{
\Mat(r, B) \ar[d]^-{L_{3}} \ar[r]^-{\Det_{\Mat(r, B)}^{B}} & B \ar[r]^-{L_{2}} & \Mat(n, C) \ar[d]^-{\Det_{\Mat(n, C)}^{C}} \\ 
\Mat(n, \Mat(r, C)) \ar@{^{(}-_>}[r]^-{} & \Mat(nr, C) \ar[r]^-{\Det_{\Mat(nr, C)}^{C}} & C \\ 
}
\]
\end{thm}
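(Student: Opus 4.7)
Writing $\Det^{B}$ and $\Det^{C}$ for the ordinary determinant over the commutative rings $B$ and $C$, the identity to establish is
$$
\Det^{C}_{\Mat(nr,C)}\bigl(\iota(L_{3}(X))\bigr) = \Det^{C}_{\Mat(n,C)}\bigl(L_{2}(\Det^{B}_{\Mat(r,B)}(X))\bigr)
$$
for every $X = [x_{kl}] \in \Mat(r,B)$. My strategy is to factor both sides through the entrywise extension
$$
\tilde L_{2} : \Mat(r,B) \to \Mat(r,\Mat(n,C)), \qquad [x_{kl}] \mapsto [L_{2}(x_{kl})],
$$
and then to reduce the claim to the classical block-determinant identity for matrices with pairwise commuting blocks.

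\textbf{Steps.} First I would fix a basis $e_{1},\ldots,e_{n}$ of $B$ as a right $C$-module; since $B$ is commutative, $\{e_{i}I_{r}\}_{i=1}^{n}$ is then a basis of $\Mat(r,B)$ as a right $\Mat(r,C)$-module. A direct calculation starting from the defining relation $be_{j}=\sum_{i}e_{i}L_{2}(b)_{ij}$ yields the block-entry formula $(L_{3}(X)_{ij})_{kl}=L_{2}(x_{kl})_{ij}$, which identifies the $nr\times nr$ matrix $\iota(L_{3}(X))$ with the other flattening $\iota'(\tilde L_{2}(X))$ up to the single permutation $(i,k)\leftrightarrow(k,i)$ of $[n]\times[r]$ applied simultaneously to rows and columns; in particular the two flattenings have equal determinants over $C$. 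Next, because $B$ is commutative and $L_{2}$ is a ring homomorphism, the image $L_{2}(B)\subset\Mat(n,C)$ is a commutative subring, so the $r^{2}$ blocks of $\tilde L_{2}(X)$ pairwise commute and I may form the inner matrix-valued determinant
$$
\sum_{\sigma\in S_{r}}\sgn(\sigma)\prod_{k=1}^{r}L_{2}(x_{k,\sigma(k)}) = L_{2}\bigl(\Det^{B}_{\Mat(r,B)}(X)\bigr),
$$
the equality on the right being immediate from $L_{2}$ being a ring homomorphism. Finally I would invoke the block-determinant identity
$$
\Det^{C}_{\Mat(nr,C)}(\iota'(M)) = \Det^{C}_{\Mat(n,C)}\Bigl(\sum_{\sigma}\sgn(\sigma)\prod_{k}M_{k,\sigma(k)}\Bigr),
$$
valid for any $M\in\Mat(r,\Mat(n,C))$ whose blocks pairwise commute, applied to $M=\tilde L_{2}(X)$, and combine with the equality of flattenings from the previous step.

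\textbf{Main obstacle.} The two combinatorial ingredients---deriving the block-entry formula for $L_{3}(X)$ and identifying $\iota(L_{3}(X))$ with a row/column reshuffle of $\iota'(\tilde L_{2}(X))$---are essentially index bookkeeping, and the reduction from $\det_{r\times r}$ of the inner matrix to $L_{2}(\Det^{B}(X))$ is nothing more than multiplicativity of $L_{2}$. The genuine content sits in the commuting-blocks determinant identity invoked at the end: over a field this is Silvester's theorem, and extending it to the commutative ring $C$ is a standard universality argument (embed the entries of $X$ into a polynomial ring over $\mathbb{Z}$ in $r^{2}n$ indeterminates and pass to its fraction field). This is the one step I expect to need real algebraic input rather than mere bookkeeping, and how the paper chooses to supply it will determine how self-contained the proof turns out to be.
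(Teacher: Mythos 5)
Your proposal is correct and follows essentially the same route as the paper: the paper likewise rewrites $L_{3}(X)$ as the block matrix $\bigl(L_{2}(x_{kl})\bigr)_{k,l}$ via a simultaneous row/column permutation that leaves the determinant unchanged (its Lemmas~2.3 and~4.2), applies the commuting-block determinant theorem (its Theorem~2.1, cited to Ingraham and Silvester), and finishes by pulling $L_{2}$ out through the ring-homomorphism property. The only cosmetic difference is that the paper cites the commuting-blocks identity over a commutative ring as known rather than supplying the universality argument you sketch.
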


Theorem~$\ref{thm:1.1}$ leads not only to some properties of the Study-type determinant, 
but also to Corollary~$\ref{cor:1.2}$ and Theorem~$\ref{thm:1.3}$. 
Let $e = (e_{1} \: e_{2} \: \cdots \: e_{m})$ be a basis of $A$ as $B$-module, 
let $f = (f_{1} \: f_{2} \: \cdots \: f_{m})$ be a basis of $B$ as $C$-module, 
let $ef := \left\{ e_{i} f_{j} \mid i \in [m], j \in [n] \right\}$, 
let $\left\{ x_{\alpha} \mid \alpha \in ef \right\}$ be the set of independent commuting variables, 
and let $\mathfrak{X}_{ef} = \mathfrak{X} := \sum_{\alpha \in ef} \alpha x_{\alpha} \in A[x_{\alpha}]$ be the general element for $ef$, 
where $A[x_{\alpha}]$ is the polynomial ring in $\left\{ x_{\alpha} \mid \alpha \in ef \right\}$ with coefficients in $A$. 
For rings $R$ and $R'$, 
we denote the set of ring homomorphisms from $R$ to $R'$ by $\Hom(R, R')$, 
and we regard any ring homomorphism $\rho \in \Hom(R, R')$ as $\rho \in \Hom(R[x_{\alpha}], R'[x_{\alpha}])$ such that $\rho(x_{\alpha} 1_{R}) = x_{\alpha} \rho(1_{R})$ for any $\alpha \in ef$, 
where $1_{R}$ is the unit element of $R$. 
Let $\rho \in \Hom(A, \Mat(r, B))$. 
We assume that there exists a commutative ring such that $C \subset \bar{C}$, 
and $L_{3} \circ \rho$ and $L_{2}$ have the direct sums 
\begin{align*}
L_{3} \circ \rho &\sim \varphi^{(1)} \oplus \varphi^{(2)} \oplus \cdots \oplus \varphi^{(s)}, &&\varphi^{(i)}(a) \in \Mat(r_{i}, \bar{C}), \\ 
L_{2} &\sim \psi^{(1)} \oplus \psi^{(2)} \oplus \cdots \oplus \psi^{(t)}, &&\psi^{(j)}(b) \in \Mat(n_{j}, \bar{C}), 
\end{align*}
where $a \in A$ and $b \in B$. 
Then, we have the following corollary and theorem. 

\begin{cor}[Corollary~$\ref{cor:5.1}$]\label{cor:1.2}
The following holds: 
\begin{align*}
\prod_{1 \leq i \leq s} \left( \Det_{\Mat(r_{i}, \bar{C}[x_{\alpha}])}^{\bar{C}[x_{\alpha}]} \circ \varphi^{(i)} \right) (\mathfrak{X}) 
&= \left( \Det_{\Mat(nr, C[x_{\alpha}])}^{C[x_{\alpha}]} \circ L_{3} \circ \rho \right) (\mathfrak{X}) \\ 
&= \left( \Det_{\Mat(n, C[x_{\alpha}])}^{C[x_{\alpha}]} \circ L_{2} \circ \Det_{\Mat(r, B[x_{\alpha}])}^{B[x_{\alpha}]} \circ \rho \right) (\mathfrak{X}) \\
&= \prod_{1 \leq j \leq t} \left( \Det_{\Mat(n_{j}, \bar{C}[x_{\alpha}])}^{\bar{C}[x_{\alpha}]} \circ \psi^{(j)} \circ \Det_{\Mat(r, B[x_{\alpha}])}^{B[x_{\alpha}]} \circ \rho \right) (\mathfrak{X}). 
\end{align*}
\end{cor}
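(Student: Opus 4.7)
The strategy is to reduce all three equalities in the corollary to two ingredients: Theorem~\ref{thm:1.1} (for the middle equality) and the standard facts that $\Det$ is invariant under conjugation and multiplicative on block-diagonal matrices (for the two outer equalities).

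For the middle equality, I would first extend $L_{2}$, $L_{3}$, and $\rho$ to the polynomial rings $B[x_{\alpha}]$, $\Mat(r, B[x_{\alpha}])$, and $A[x_{\alpha}]$, respectively, using the same bases $e$, $f$; this is possible because each of these maps is a ring homomorphism and hence extends functorially to polynomial rings via $x_{\alpha} \mapsto x_{\alpha}$ on the variables. Theorem~\ref{thm:1.1} then holds verbatim with $B$ and $C$ replaced by $B[x_{\alpha}]$ and $C[x_{\alpha}]$, and evaluating the commutative diagram at $\rho(\mathfrak{X}) \in \Mat(r, B[x_{\alpha}])$ yields the middle equality at once.

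For the first equality, the hypothesis $L_{3} \circ \rho \sim \varphi^{(1)} \oplus \cdots \oplus \varphi^{(s)}$ supplies an invertible matrix $P$ over $\bar{C}$ with $P \, (L_{3} \circ \rho)(a) \, P^{-1} = \bigoplus_{i=1}^{s} \varphi^{(i)}(a)$ for every $a \in A$. Since both sides are additive in $a$, this identity is preserved under the polynomial substitution $a \mapsto \mathfrak{X}$ after base change from $\bar{C}$ to $\bar{C}[x_{\alpha}]$. Taking $\Det_{\Mat(nr, \bar{C}[x_{\alpha}])}^{\bar{C}[x_{\alpha}]}$ of both sides, conjugation invariance cancels $P$ and multiplicativity on block diagonals produces $\prod_{i} \Det(\varphi^{(i)}(\mathfrak{X}))$. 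The last equality is derived identically: apply $L_{2} \sim \psi^{(1)} \oplus \cdots \oplus \psi^{(t)}$ to the element $b = \left(\Det^{B[x_{\alpha}]}_{\Mat(r, B[x_{\alpha}])} \circ \rho\right)(\mathfrak{X}) \in B[x_{\alpha}]$ and take $\Det$.

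The main obstacle I anticipate is purely bookkeeping. One must check that the similarity transformations, which a priori live over $\bar{C}$, interact correctly with the extension of scalars to $\bar{C}[x_{\alpha}]$ and with the embedding $C[x_{\alpha}] \hookrightarrow \bar{C}[x_{\alpha}]$, so that the $\Det$ values computed over $\bar{C}[x_{\alpha}]$ coincide with those computed over $C[x_{\alpha}]$ for inputs already lying in $C[x_{\alpha}]$. Once these compatibilities are recorded, the proof consists of a single application of Theorem~\ref{thm:1.1} together with two standard determinant manipulations on block-diagonal similar matrices.
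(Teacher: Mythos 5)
Your proposal is correct and follows essentially the same route as the paper: the paper derives the middle equality by applying Theorem~\ref{thm:4.4} (extended to the polynomial rings) to $\rho(\mathfrak{X})$, and treats the two outer equalities as immediate consequences of the assumed direct-sum decompositions of $L_{3}\circ\rho$ and $L_{2}$ together with conjugation-invariance and block-diagonal multiplicativity of the determinant. Your extra remark about checking compatibility of the similarity transformations with the base change to $\bar{C}[x_{\alpha}]$ is sound bookkeeping that the paper leaves implicit.
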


\begin{thm}[see Theorem~$\ref{thm:5.3}$ for proof]\label{thm:1.3}
If $\left( \Det_{\Mat(r_{i}, \bar{C}[x_{\alpha}])}^{\bar{C}[x_{\alpha}]} \circ \varphi^{(i)} \right) (\mathfrak{X})$ is an irreducible polynomial over $\bar{C}[x_{\alpha}]$, 
then we have 
$$
\deg{\varphi^{(i)}} \leq \max{ \left\{ \deg{\psi^{(j)}} \mid 1 \leq j \leq t \right\} } \times \deg{\rho}, 
$$
where $\deg{\rho}$ is the degree of the polynomial $\left( \Det_{\Mat(r, B[x_{\alpha}])}^{B[x_{\alpha}]} \circ \rho \right)(\mathfrak{X})$. 
\end{thm}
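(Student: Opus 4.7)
The plan is to read off the theorem from Corollary~\ref{cor:1.2} by a degree count in the polynomial ring $\bar{C}[x_{\alpha}]$. Corollary~\ref{cor:1.2} provides the identity
\[
\prod_{1 \leq i \leq s} \bigl( \Det_{\Mat(r_{i}, \bar{C}[x_{\alpha}])}^{\bar{C}[x_{\alpha}]} \circ \varphi^{(i)} \bigr)(\mathfrak{X})
= \prod_{1 \leq j \leq t} \bigl( \Det_{\Mat(n_{j}, \bar{C}[x_{\alpha}])}^{\bar{C}[x_{\alpha}]} \circ \psi^{(j)} \circ \Det_{\Mat(r, B[x_{\alpha}])}^{B[x_{\alpha}]} \circ \rho \bigr)(\mathfrak{X}),
\]
so under the irreducibility hypothesis on the $i$-th factor of the left-hand side, that factor must divide some factor of the right-hand side, and comparing the $\{x_{\alpha}\}$-degrees will yield the stated bound.

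First, I would identify the degree of the left-hand factor with the integer $\deg \varphi^{(i)} = r_{i}$. Since $\varphi^{(i)}$ is a ring homomorphism and $\mathfrak{X} = \sum_{\alpha} \alpha x_{\alpha}$ is linear in the variables, the matrix $\varphi^{(i)}(\mathfrak{X}) \in \Mat(r_{i}, \bar{C}[x_{\alpha}])$ has entries homogeneous of degree one; hence its $r_{i} \times r_{i}$ determinant is homogeneous of degree $r_{i}$.

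Second, I would bound the degree of each right-hand factor. By the definition of $\deg \rho$, the polynomial $(\Det \circ \rho)(\mathfrak{X}) \in B[x_{\alpha}]$ has degree $\deg \rho$; extending $\psi^{(j)}$ so that $\psi^{(j)}(x_{\alpha}) = x_{\alpha}$ and applying it entrywise yields a matrix in $\Mat(n_{j}, \bar{C}[x_{\alpha}])$ whose entries have degree at most $\deg \rho$, so its $n_{j} \times n_{j}$ determinant has degree at most $n_{j} \cdot \deg \rho = \deg \psi^{(j)} \cdot \deg \rho$. Combining this with the irreducibility assumption, $(\Det \circ \varphi^{(i)})(\mathfrak{X})$ is an irreducible polynomial of degree $\deg \varphi^{(i)}$ dividing a polynomial of degree at most $\deg \psi^{(j)} \cdot \deg \rho$ for some $j$, so
\[
\deg \varphi^{(i)} \leq \deg \psi^{(j)} \cdot \deg \rho \leq \max_{1 \leq k \leq t} \deg \psi^{(k)} \cdot \deg \rho,
\]
which is the desired inequality.

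The main obstacle is the step ``an irreducible element dividing a product divides a factor'' in $\bar{C}[x_{\alpha}]$, which tacitly requires a reasonable factorization theory on $\bar{C}[x_{\alpha}]$. This is automatic when $\bar{C}$ is a unique factorization domain (in particular a field, the typical setting in which one splits representations); for a more general commutative ring $\bar{C}$ one should either add such a hypothesis or interpret irreducibility after passing to the fraction field of $\bar{C}$ when $\bar{C}$ is an integral domain. Once this point is settled, the rest is the mechanical degree count above.
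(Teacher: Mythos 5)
Your proof is correct and follows essentially the same route as the paper's: identify $\deg\varphi^{(i)}$ with the degree of the (homogeneous, linear-entry) determinant polynomial, invoke Corollary~\ref{cor:1.2} to see that this irreducible polynomial divides the product over $j$, and bound each factor's degree by $\deg\psi^{(j)}\cdot\deg\rho$. Your explicit caveat about needing a factorization theory on $\bar{C}[x_{\alpha}]$ (e.g.\ $\bar{C}$ a field or UFD) for the ``irreducible divides a factor'' step is a point the paper leaves implicit, but it does not change the argument.
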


Corollary~$\ref{cor:1.2}$ leads to an extension of Dedekind's theorem, 
while Theorem~$\ref{thm:1.3}$ leads to an inequality characterizing the degrees of irreducible representations of finite groups. 
Let $\Theta(G)$ be the group determinant of the finite group $G$, 
let $\widehat{G}$ be a complete set of inequivalent irreducible representations of $G$ over $\mathbb{C}$, 
let $\{ x_{g} \mid g \in G \}$ be independent commuting variables, 
let $\mathbb{C}[x_{g}] = \mathbb{C}\left[x_{g} ; g \in G \right]$ be the polynomial ring in $\left\{ x_{g} \: \vert \: g \in G \right\}$ with coefficients in $\mathbb{C}$, 
let $\mathbb{C} G$ be the group algebra of $G$ over $\mathbb{C}$, 
let $\mathbb{C}[x_{g}]G := \mathbb{C}[x_{g}] \otimes \mathbb{C}G = \left\{ \sum_{g \in G} c_{g} g \: \vert \: c_{g} \in \mathbb{C}[x_{g}] \right\}$, 
let $1_{G}$ be the unit element of $G$, 
and let $|G|$ be the order of $G$. 
We extend $\varphi \in \widehat{G}$ to $\varphi \colon \mathbb{C}[x_{g}]G \to \mathbb{C}[x_{g}]G$ satisfy $\varphi\left(\sum_{g \in G} c_{g} g \right) = \sum_{g \in G} c_{g} \varphi(g)$, 
where $c_{g} \in \mathbb{C}[x_{g}]$. 
The extension of Dedekind's theorem mentioned above is the following. 

\begin{thm}[Extension of Dedekind's theorem, see Theorem~$\ref{thm:9.3}$ for proof]\label{thm:1.4}
Let $G$ be a finite group and let $H$ be an abelian subgroup of $G$. 
Then, writing $\Sdet_{\Mat(1, \mathbb{C}[x_{g}]G)}^{\mathbb{C}[x_{g}]H}(\mathfrak{X}_{G})$ as $\Theta(G:H)$, we have 
\begin{align*}
\Theta(G) 1_{G} 
&= \prod_{\varphi \in \widehat{G}} \Det_{\Mat(\deg{\varphi}, \mathbb{C}[x_{g}]H)}^{\mathbb{C}[x_{g}]H} \left( \varphi(\mathfrak{X}_{G}) \right)^{\deg{\varphi}} \\ 
&= \prod_{\chi \in \widehat{H}} \chi \left( \Theta(G:H) \right) \in \mathbb{C}[x_{g}]G. 
\end{align*}
\end{thm}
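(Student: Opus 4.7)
The plan is to deduce the theorem from a single application of Corollary~\ref{cor:1.2}, specialized to $A = \mathbb{C}[x_{g}]G$, $B = \mathbb{C}[x_{g}]H$, $C = \bar{C} = \mathbb{C}[x_{g}]$. In this setting $A$ is a free right $B$-module of rank $m := [G : H]$, with basis any chosen set of left coset representatives of $H$ in $G$, and $B$ is a free right $C$-module of rank $n := |H|$, with basis $H$ itself. First I would take $r = m$ and let $\rho \in \Hom(A, \Mat(m, B))$ be the associated left regular representation of $A$ as a right $B$-module, so that directly from the definition of the Study-type determinant one has
\[
\bigl(\Det_{\Mat(m,\, B[x_{\alpha}])}^{B[x_{\alpha}]} \circ \rho\bigr)(\mathfrak{X}_{G}) = \Sdet_{\Mat(1, A)}^{B}(\mathfrak{X}_{G}) = \Theta(G : H),
\]
while the composite $\iota \circ L_{3} \circ \rho : A \to \Mat(|G|, C)$ is exactly the left regular representation of $\mathbb{C}[x_{g}]G$ on itself in the basis $ef$; evaluating its determinant at $\mathfrak{X}_{G}$ therefore recovers the classical group determinant $\Theta(G)$.

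The next step is to supply the direct-sum decompositions required by Corollary~\ref{cor:1.2}. The Wedderburn isomorphism $\mathbb{C}G \cong \bigoplus_{\varphi \in \widehat{G}} \Mat(\deg\varphi, \mathbb{C})$ implies that
\[
L_{3} \circ \rho \sim \bigoplus_{\varphi \in \widehat{G}} \varphi^{\oplus \deg\varphi},
\]
each irreducible representation $\varphi$ appearing with multiplicity equal to its degree. Because $H$ is abelian, $\mathbb{C}H$ splits into a direct sum of one-dimensional characters, giving
\[
L_{2} \sim \bigoplus_{\chi \in \widehat{H}} \chi,
\]
each $\chi$ appearing exactly once. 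Substituting these decompositions into Corollary~\ref{cor:1.2} and evaluating at $\mathfrak{X} = \mathfrak{X}_{G}$ yields the chain
\[
\prod_{\varphi \in \widehat{G}} \Det_{\Mat(\deg\varphi,\, \mathbb{C}[x_{g}]H)}^{\mathbb{C}[x_{g}]H} \bigl(\varphi(\mathfrak{X}_{G})\bigr)^{\deg\varphi} \;=\; \Theta(G) \;=\; \prod_{\chi \in \widehat{H}} \chi\bigl(\Theta(G : H)\bigr)
\]
in $\mathbb{C}[x_{g}]$; multiplying through by $1_{G}$ embeds this identity into $\mathbb{C}[x_{g}]G$ as stated.

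The main obstacle is careful bookkeeping rather than any conceptual depth. I expect the most delicate point to be checking that $(\iota \circ L_{3} \circ \rho)(\mathfrak{X}_{G})$ coincides with Dedekind's original group matrix $(x_{gh^{-1}})_{g, h \in G}$ up to a simultaneous permutation of rows and columns induced by the bijection $ef \leftrightarrow G$, so that its determinant is genuinely $\Theta(G)$ rather than a sign twist of it; one must likewise verify that the multiplicities $\deg\varphi$ coming from the Wedderburn decomposition are correctly reflected in the exponents on the left-hand side. Once these identifications are in place, the theorem is an immediate consequence of Corollary~\ref{cor:1.2}.
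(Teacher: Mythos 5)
Your proposal is correct and follows essentially the same route as the paper: specialize Corollary~\ref{cor:1.2} to $A=\mathbb{C}[x_{g}]G$, $B=\mathbb{C}[x_{g}]H$, $C=\bar{C}=\mathbb{C}[x_{g}]$ with $\rho$ the left regular representation of $A$ over $B$, identify $L_{3}\circ\rho$ with the regular representation of $G$ (decomposed via $L_{G}\sim\bigoplus_{\varphi}\deg(\varphi)\,\varphi$, the paper's Theorem~\ref{thm:9.2}) and $L_{2}$ with $\bigoplus_{\chi\in\widehat{H}}\chi$. The bookkeeping points you flag (the bijection $ef\leftrightarrow G$ and invariance of the determinant under the simultaneous row--column permutation) are exactly the identifications the paper makes in Section~10 via Lemma~\ref{lem:7.5} and the $S_{m}$-action of Section~2.
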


The group determinant $\Theta(G) \in \mathbb{C}[x_{g}]$ is the determinant of a matrix with entries in $\{ x_{g} \mid g \in G \}$. 
(It is known that the group determinant determines the group. For the details, see \cite{Formanek_1991} and \cite{Richard}.) 
Dedekind proved the following theorem concerning the irreducible factorization of the group determinant for any finite abelian group (see, e.g., \cite{van2013history}). 

\begin{thm}[Dedekind's theorem]\label{thm:1.5}
Let $G$ be a finite abelian group. 
Then, we have
$$
\Theta(G) = \prod_{\chi \in \widehat{G}} \chi\left( \mathfrak{X}_{G} \right) \in \mathbb{C}[x_{g}]. 
$$
\end{thm}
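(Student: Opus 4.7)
The plan is to derive Dedekind's theorem immediately from Theorem~\ref{thm:1.4}, exploiting the fact that the latter is billed as the ``Extension of Dedekind's theorem.'' Since $G$ is abelian, it serves as its own abelian subgroup, so I would specialize Theorem~\ref{thm:1.4} by taking $H = G$.

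First I would observe that, because $G$ is abelian, every irreducible complex representation of $G$ is one-dimensional, so $\widehat{G}$ may be identified with the group of characters $\chi \colon G \to \mathbb{C}^{\times}$ and $\deg \chi = 1$ for every $\chi \in \widehat{G}$. Consequently, each $\chi(\mathfrak{X}_{G})$ is the $1 \times 1$ matrix whose single entry equals $\sum_{g \in G} \chi(g)\, x_{g}$, and therefore
\[
\Det_{\Mat(\deg{\chi}, \mathbb{C}[x_{g}]H)}^{\mathbb{C}[x_{g}]H}\bigl(\chi(\mathfrak{X}_{G})\bigr)^{\deg{\chi}} = \chi(\mathfrak{X}_{G}).
\]
Substituting into the first equality of Theorem~\ref{thm:1.4} yields
\[
\Theta(G)\, 1_{G} = \prod_{\chi \in \widehat{G}} \chi(\mathfrak{X}_{G}) \qquad \text{in } \mathbb{C}[x_{g}]G.
\]

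Next I would note that each factor $\chi(\mathfrak{X}_{G})$ lies in the subring $\mathbb{C}[x_{g}] \cdot 1_{G} \subset \mathbb{C}[x_{g}]G$, since $\chi(g) \in \mathbb{C}$ and $\mathfrak{X}_{G}$ is a $\mathbb{C}$-linear combination of the $x_{g}$. Their product is therefore of the form $P \cdot 1_{G}$ with $P \in \mathbb{C}[x_{g}]$, and comparing coefficients of $1_{G}$ on both sides of the displayed identity gives $\Theta(G) = \prod_{\chi \in \widehat{G}} \chi(\mathfrak{X}_{G})$ in $\mathbb{C}[x_{g}]$, as required.

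There is essentially no main obstacle: the proof is a one-step specialization of Theorem~\ref{thm:1.4}. The only care required is routine bookkeeping to verify that the definitions of $\Theta(G:H)$ and $\Det_{\Mat(1, \mathbb{C}[x_{g}]H)}^{\mathbb{C}[x_{g}]H}$ reduce as claimed in the rank-one case $H = G$, and that the resulting factors genuinely sit inside the scalar subring $\mathbb{C}[x_{g}]$. If Theorem~\ref{thm:1.4} were unavailable, one could instead give the classical argument directly: $\mathbb{C}G$ is commutative semisimple, hence by Wedderburn (equivalently the finite Fourier transform) decomposes as $\bigoplus_{\chi \in \widehat{G}} \mathbb{C}$ via $g \mapsto (\chi(g))_{\chi}$, and $\Theta(G)$ is the determinant of left multiplication by $\mathfrak{X}_{G}$ on $\mathbb{C}G$, which becomes diagonal with entries $\chi(\mathfrak{X}_{G})$ after this change of basis, yielding the displayed product.
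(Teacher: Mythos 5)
Your derivation is correct, and it is worth noting that the paper itself gives no proof of Theorem~\ref{thm:1.5}: it is stated as a classical result of Dedekind with a reference, and the only connection made in the text is the remark that ``Theorem~\ref{thm:1.4} leads to Dedekind's theorem.'' Your argument is precisely the reduction that remark has in mind: take $H=G$ in Theorem~\ref{thm:1.4}, use that every $\varphi\in\widehat{G}$ has $\deg\varphi=1$ for abelian $G$, so each factor $\Det_{\Mat(1,\mathbb{C}[x_{g}]G)}^{\mathbb{C}[x_{g}]G}\bigl(\chi(\mathfrak{X}_{G})\bigr)^{1}$ is just the scalar $\sum_{g}\chi(g)x_{g}\cdot 1_{G}$, and then read off the coefficient of $1_{G}$ in the free $\mathbb{C}[x_{g}]$-module $\mathbb{C}[x_{g}]G$. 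There is no circularity: Theorem~\ref{thm:1.4} is proved (as Theorem~\ref{thm:9.3}) from Corollary~\ref{cor:5.1} together with Frobenius' theorem and Theorem~\ref{thm:9.2}, none of which presupposes Dedekind's theorem. Your fallback argument (diagonalizing left multiplication by $\mathfrak{X}_{G}$ on the commutative semisimple algebra $\mathbb{C}G$ via the characters) is the standard classical proof and is also the special case $H=G$, $\deg\varphi\equiv 1$ of Frobenius' theorem, so either route is sound; the first has the merit of exhibiting Theorem~\ref{thm:1.5} as a genuine corollary of the paper's machinery.
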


Frobenius proved the following theorem concerning the irreducible factorization of the group determinant for any finite group; 
thus, he obtained a generalization of Dedekind's theorem (see, e.g., \cite{conrad1998origin}). 

\begin{thm}[Frobenius' theorem]\label{thm:1.6}
Let $G$ be a finite group. 
Then we have the irreducible factorization 
$$
\Theta(G) = \prod_{\varphi \in \widehat{G}} \Det_{\Mat(\deg{\varphi}, \mathbb{C}[x_{g}])}^{\mathbb{C}[x_{g}]} \left( \varphi\left( \mathfrak{X}_{G} \right) \right)^{\deg{\varphi}}. 
$$
\end{thm}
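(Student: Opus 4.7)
\noindent The plan is to derive Theorem~\ref{thm:1.6} by specializing Corollary~\ref{cor:1.2} to the group-algebra setting, and invoking the Wedderburn-Artin decomposition of $\mathbb{C}G$ to identify the irreducible constituents of the left regular representation.

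First I would apply Corollary~\ref{cor:1.2} with $A = \mathbb{C}G$, $B = C = \bar{C} = \mathbb{C}$, $r = |G|$, and with $\rho \colon A \to \Mat(|G|, \mathbb{C})$ taken to be the left regular representation $L_{G}$. Under these choices the rank $n$ of $B$ over $C$ is $1$, so $L_{2}$ reduces to the identity on $\mathbb{C}$ and $L_{3}$ is the identity on $\Mat(|G|, \mathbb{C})$; the basis $ef$ can be identified with $G$, and the general element becomes $\mathfrak{X} = \mathfrak{X}_{G}$. By the definition of the group determinant,
\[
\left( \Det_{\Mat(|G|, \mathbb{C}[x_{g}])}^{\mathbb{C}[x_{g}]} \circ L_{3} \circ \rho \right)(\mathfrak{X}_{G}) = \Theta(G).
\]

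Next I would invoke Maschke's theorem together with the Wedderburn-Artin decomposition $\mathbb{C}G \cong \bigoplus_{\varphi \in \widehat{G}} \Mat(\deg \varphi, \mathbb{C})$, which guarantees that $L_{G} \sim \bigoplus_{\varphi \in \widehat{G}} (\deg \varphi) \cdot \varphi$; thus each $\varphi \in \widehat{G}$ appears as an irreducible constituent of $L_{G}$ with multiplicity $\deg \varphi$. Listing these constituents $\varphi^{(i)}$ with multiplicity, the first equality of Corollary~\ref{cor:1.2} then yields
\[
\Theta(G) = \prod_{i} \Det_{\Mat(\deg \varphi^{(i)}, \mathbb{C}[x_{g}])}^{\mathbb{C}[x_{g}]}\left( \varphi^{(i)}(\mathfrak{X}_{G}) \right) = \prod_{\varphi \in \widehat{G}} \Det_{\Mat(\deg \varphi, \mathbb{C}[x_{g}])}^{\mathbb{C}[x_{g}]}\left( \varphi(\mathfrak{X}_{G}) \right)^{\deg \varphi},
\]
which is the product formula in the statement.

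The main obstacle, beyond the routine bookkeeping needed to verify that the specialization $B = C$ of Corollary~\ref{cor:1.2} unfolds as described (checking the maps $L_{2}, L_{3}$ and the inclusion $\Mat(n, \Mat(r, C)) \hookrightarrow \Mat(nr, C)$ when $n = 1$), is that the theorem claims this to be the \emph{irreducible} factorization in $\mathbb{C}[x_{g}]$. The irreducibility of each factor $\Det(\varphi(\mathfrak{X}_{G}))$ does not follow from the product formula alone; the standard route is to combine Burnside's density theorem (so that $\varphi(\mathbb{C}G) = \Mat(\deg \varphi, \mathbb{C})$) with a degree count against the known total degree $|G|$ of $\Theta(G)$, and this argument would need to be supplied or cited separately.
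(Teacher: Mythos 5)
Your derivation of the product formula --- reducing to the decomposition $L_{G} \sim \bigoplus_{\varphi \in \widehat{G}} (\deg{\varphi})\,\varphi$ of the regular representation and then taking determinants blockwise --- is exactly the route the paper takes in Section~10, where Theorem~\ref{thm:9.1} is said to ``hold from'' Theorem~\ref{thm:9.2}; your detour through Corollary~\ref{cor:1.2} with $B = C = \bar{C} = \mathbb{C}$ is a degenerate instance of the same computation. The irreducibility of each factor is likewise not proved in the paper (it is attributed to Frobenius and handled by citation), so the gap you flag matches the paper's own treatment rather than being a defect relative to it.
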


Since Dedekind's theorem is a special case of Frobenius' theorem, 
we call Frobenius' theorem a generalization of Dedekind's theorem. 
On the other hand, 
Theorem~$\ref{thm:1.4}$ gives the relation on $\mathbb{C}[x_{g}]G$, 
and Theorem~$\ref{thm:1.4}$ leads to Dedekind's theorem. 
Therefore, we call Theorem~$\ref{thm:1.4}$ an extension of Dedekind's theorem. 
That is, the `extension' is used to mean `extend $\mathbb{C}[x_{g}]$ to $\mathbb{C}[x_{g}]G$.' 

Let $H$ be an abelian subgroup of $G$ and let $[G : H]$ be the index of $H$ in $G$. 
The following extension of Dedekind's theorem, 
which is different from the theorem due to Frobenius, is given in \cite{Yamaguchi2017}. 

\begin{thm}\label{thm:1.7}
Let $G$ be a finite abelian group and let $H$ be a subgroup of $G$. 
Then, for every $h \in H$, there exists a homogeneous polynomial $c_{h} \in \mathbb{C}[x_{g}]$ such that $\deg{a_{h}} = [G : H]$ and 
$$
\Theta(G) 1_{G} = \prod_{\chi \in \widehat{H}} \sum_{h \in H} \chi(h) c_{h} h. 
$$
If $H = G$, we can take $c_{h} = x_{h}$ for each $h \in H$. 
\end{thm}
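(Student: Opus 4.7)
The plan is to derive Theorem~\ref{thm:1.7} from Theorem~\ref{thm:1.4} by means of a Fourier-inversion argument on the abelian group $H$. First, I would simply take $c_{h} \in \mathbb{C}[x_{g}]$ to be the coefficients in the expansion $\Theta(G:H) = \sum_{h \in H} c_{h} h \in \mathbb{C}[x_{g}]H$. Since $G$ is abelian, $\mathbb{C}G$ is a free right $\mathbb{C}H$-module of rank $n := [G:H]$ on any set $g_{1}, \ldots, g_{n}$ of coset representatives of $G/H$; by the definition of the Study-type determinant, $\Theta(G:H)$ is the usual determinant of the $n \times n$ matrix $M$ over $\mathbb{C}[x_{g}]H$ representing left multiplication by $\mathfrak{X}_{G}$ in this basis. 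Writing each $g \in G$ uniquely as $g_{i} h_{g}$ with $h_{g} \in H$ shows that every entry of $M$ is a $\mathbb{C}H$-linear combination of the $x_{g}$'s that is homogeneous of degree $1$; hence $\det M$ is homogeneous of degree $n$ in the $x_{g}$'s, and each $c_{h}$ inherits this homogeneity.

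By Theorem~\ref{thm:1.4} we already have $\Theta(G) 1_{G} = \prod_{\chi \in \widehat{H}} \chi(\Theta(G:H))$, so it suffices to prove
$$
P \;:=\; \prod_{\chi \in \widehat{H}} \sum_{h \in H} \chi(h) c_{h} h \;=\; \biggl(\prod_{\chi \in \widehat{H}} \chi\bigl(\Theta(G:H)\bigr)\biggr) 1_{G}.
$$
For each $\chi \in \widehat{H}$ the map $\sigma_{\chi} \colon \mathbb{C}[x_{g}]H \to \mathbb{C}[x_{g}]H$, $h \mapsto \chi(h) h$, is a ring automorphism that satisfies $\sigma_{\chi}(\Theta(G:H)) = \sum_{h \in H} \chi(h) c_{h} h$. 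The identity $\eta \circ \sigma_{\chi} = \eta\chi$ valid for any $\eta \in \widehat{H}$ then yields
$$
\eta(P) \;=\; \prod_{\chi \in \widehat{H}} (\eta\chi)\bigl(\Theta(G:H)\bigr) \;=\; \prod_{\chi' \in \widehat{H}} \chi'\bigl(\Theta(G:H)\bigr) \;=\; \Theta(G),
$$
independently of $\eta$. Since the character table of the abelian group $H$ is invertible, an element of $\mathbb{C}[x_{g}]H$ on which every $\eta \in \widehat{H}$ takes the same value must be a $\mathbb{C}[x_{g}]$-scalar multiple of $1_{H} = 1_{G}$; this forces $P = \Theta(G) \cdot 1_{G}$, as required.

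Finally, when $H = G$ the module $\mathbb{C}[x_{g}]G$ has rank $1$ over itself, so $M$ collapses to the single entry $\mathfrak{X}_{G}$ and $\Theta(G:G) = \sum_{g \in G} x_{g} g$, giving $c_{g} = x_{g}$ for every $g \in G$. The only delicate point in the argument is the Fourier-inversion step in the previous paragraph, but it reduces to the standard fact that the characters of a finite abelian group separate its elements, so I do not expect it to be an obstacle beyond careful bookkeeping of the extension of each $\chi \in \widehat{H}$ to $\mathbb{C}[x_{g}]H$.
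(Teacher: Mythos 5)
Your proof is correct and follows the route the paper itself intends: Theorem~\ref{thm:1.7} is presented there only as ``a special case of Theorem~\ref{thm:9.3}'' with all details deferred to the cited reference, so your job was to supply the actual deduction from Theorem~\ref{thm:1.4}. The two ingredients you add --- homogeneity of the $c_{h}$ of degree $[G:H]$, read off from the degree-one entries of the matrix $L(\mathfrak{X}_{G})$ over $\mathbb{C}[x_{g}]H$, and the Fourier-inversion step (via the twists $\sigma_{\chi}$ and the invertibility of the character table of $H$) that upgrades the scalar identity $\prod_{\chi \in \widehat{H}} \chi\left(\Theta(G:H)\right) = \Theta(G)$ to the group-algebra identity $\prod_{\chi \in \widehat{H}} \sum_{h \in H} \chi(h) c_{h} h = \Theta(G) 1_{G}$ --- are exactly the details the paper omits, and both are sound.
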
 

Theorem~$\ref{thm:1.7}$ is a special case of Theorem~$\ref{thm:1.4}$. 
Theorems~$\ref{thm:1.3}$, $\ref{thm:1.4}$ and $\ref{thm:1.6}$ lead to the following corollary. 

\begin{cor}[Corollary~$\ref{cor:10.5}$]\label{cor:1.8}
Let $G$ be a finite group and let $H$ be an abelian subgroup of $G$. 
Then, for all $\varphi \in \widehat{G}$, we have 
\begin{align*}
\deg{\varphi} \leq [G:H]. 
\end{align*}
\end{cor}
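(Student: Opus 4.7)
The plan is to apply Theorem~\ref{thm:1.3} in the special case $A = \mathbb{C}G$, $B = \mathbb{C}H$, $C = \bar{C} = \mathbb{C}$, taking $\rho$ to be the left regular representation that realizes $A$ as a free right $B$-module of rank $m := [G:H]$. I would choose a left transversal of $H$ in $G$ as the basis $e = (e_{1}, \ldots, e_{m})$ of $A$ over $B$ and the elements of $H$ as the basis $f$ of $B$ over $C$; then $ef = G$ and $\mathfrak{X} = \mathfrak{X}_{G}$, so $(\Det \circ \rho)(\mathfrak{X}_{G})$ is precisely the element $\Theta(G:H) \in \mathbb{C}[x_{g}]H$ appearing in Theorem~\ref{thm:1.4}.

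Next I would identify the decompositions demanded by Theorem~\ref{thm:1.3}. The composite $L_{3} \circ \rho$ is the ordinary left regular representation of $\mathbb{C}G$ on itself as a right $\mathbb{C}$-module; by Maschke--Wedderburn it decomposes over $\mathbb{C}$ as $\bigoplus_{\varphi \in \widehat{G}} \varphi^{\oplus \deg \varphi}$, so the irreducible summands $\varphi^{(i)}$ run, with multiplicities, through all of $\widehat{G}$. Since $H$ is abelian, $L_{2} \sim \bigoplus_{\chi \in \widehat{H}} \chi$ with every $\deg \chi = 1$, so $\max_{j} \deg \psi^{(j)} = 1$.

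Then I would pin down $\deg \rho$, which by definition is the total $x_{g}$-degree of $\Theta(G:H)$. The matrix $\rho(\mathfrak{X}_{G})$ has entries linear in the $x_{g}$, so this degree is at most $m$. The second identity in Theorem~\ref{thm:1.4} gives $\Theta(G) 1_{G} = \prod_{\chi \in \widehat{H}} \chi(\Theta(G:H))$; since each $\chi$ acts only on the coefficients in $\mathbb{C}H$ and preserves the $x_{g}$-degree, while $\deg \Theta(G) = |G|$, comparing degrees forces $\deg \rho = [G:H]$ exactly.

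Finally, by Frobenius' theorem (Theorem~\ref{thm:1.6}) each $\Det_{\Mat(\deg \varphi, \mathbb{C}[x_{g}])}^{\mathbb{C}[x_{g}]}(\varphi(\mathfrak{X}_{G}))$ is irreducible in $\mathbb{C}[x_{g}]$, so the irreducibility hypothesis of Theorem~\ref{thm:1.3} is satisfied for every summand $\varphi^{(i)} = \varphi \in \widehat{G}$; the theorem then yields $\deg \varphi \leq 1 \cdot [G:H] = [G:H]$, as claimed. I do not expect any substantive obstruction: the only point requiring care is the routine verification that $L_{3} \circ \rho$ genuinely coincides with the regular representation of $\mathbb{C}G$ and that the $x_{g}$-degree of $\Theta(G:H)$ is exactly $[G:H]$, not merely an upper bound.
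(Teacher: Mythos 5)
Your proposal is correct and follows essentially the same route as the paper: the paper derives Corollary~\ref{cor:10.5} from Theorem~\ref{thm:5.3} (your degree inequality), Theorem~\ref{thm:9.1} (Frobenius' theorem, supplying the irreducibility hypothesis), and Theorem~\ref{thm:9.3} together with Theorem~\ref{thm:9.2} (identifying $L_{3}\circ\rho$ with $L_{G}\sim\bigoplus_{\varphi}\varphi^{\oplus\deg\varphi}$, $L_{2}\sim\bigoplus_{\chi\in\widehat{H}}\chi$, and $\deg\rho=[G:H]$). The only minor remark is that for the inequality you only need $\deg\rho\leq[G:H]$, which is immediate from the entries of $\rho(\mathfrak{X}_{G})$ being linear, so your exact degree computation is more than is required.
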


Note that \ Corollary~$\ref{cor:1.8}$ follows from Frobenius reciprocity, 
and it is known that if $H$ is an abelian normal subgroup of $G$, 
then $\deg{\varphi}$ divides $[G:H]$ for all $\varphi \in \widehat{G}$ (see e.g, \cite{kondo2011group}).

This paper is organized as follows. 
In Section~$2$, we present an action of the symmetric group on the set of square matrices, 
and we introduce two formulas for determinants of commuting-block matrices. 
In addition, we recall the definition of the Kronecker product, 
and we present a permutation using the Euclidean algorithm. 
This permutation causes the order of the Kronecker product to be reversed. 
These preparations are useful for proving Theorem~$\ref{thm:1.1}$. 
In Section~$3$, 
we recall the definition of regular representations and invertibility preserving maps, 
and we show that regular representations are invertibility preserving maps. 
The regular representation is used in defining Study-type determinants. 
In addition, we formulate a commutative diagram of regular representations. 
This commutative diagram is also useful for proving Theorem~$\ref{thm:1.1}$. 
In Section~$4$, 
we prove Theorem~$\ref{thm:1.1}$. 
In Section~$5$, we give a corollary concerning the degrees of some representations contained in regular representations. 
In Section~$6$, we define Study-type determinants and elucidate their properties. 
In addition, we construct a commutative diagram for Study-type determinants. 
This commutative diagram leads to some properties of Study-type determinants. 
In Section~$7$, 
we give a Cayley-Hamilton-type theorem for the Study-type determinant under the assumption that there exists a basis of $A$ as a $B$-right module satisfying the conditions (i) and (ii). 
This Cayley-Hamilton type theorem leads to some properties of Study-type determinants. 
In Section~$8$, 
we obtain two expressions for regular representations under the assumption that the basis $e = (e_{1} \: e_{2} \: \cdots \: e_{m})$ of $A$ as a $B$-module satisfying the following conditions: 
\begin{enumerate}
\item[(iii)] for any $e_{i}$ and $e_{j}$, $e_{i}B * e_{j}B \in \left\{ e_{1}B, e_{2}B, \ldots, e_{m}B \right\}$; 
\item[(iv)] there exists $e_{k}$ such that $e_{k} B = B$; 
\item[(v)] for any $e_{i}$, there exists $e_{j}$ such that $e_{i}B * e_{j}B = B$. 
\end{enumerate}
In addition, we characterize the images of regular representations in the case that $e$ satisfies the following additional condition: 
\begin{enumerate}
\item[(vi)] for any $e_{i}$ and $e_{j}$, $e_{i}B * e_{j}B = e_{j}B * e_{i} B$. 
\end{enumerate}
This characterization is the following. 
\begin{thm}[see Theorem~$\ref{thm:7.7}$ for proof]\label{thm:1.9}
Let $L_{e}$ be the left regular representation from $\Mat(r, A)$ to $\Mat(m, \Mat(r, B))$ with respect to $e$. 
Then we have 
$$
L_{e}(A) = \{ b \in \Mat(m, B) \mid J(e_{k}) b = b J(e_{k}), k \in [m] \}. 
$$
\end{thm}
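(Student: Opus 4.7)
The plan is to prove the two inclusions separately; the forward inclusion is a direct computation while the reverse will require using conditions~(iii)-(vi) in an essential way.

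For the forward inclusion $L_{e}(A) \subseteq \{ b \in \Mat(m, B) \mid J(e_{k}) b = b J(e_{k}), k \in [m]\}$, I would first note that $L_{e}$ is a ring homomorphism (as observed in Section~3) and that $J(e_{k})$, by its construction via the two expressions of the regular representation in Section~8, encodes the right action of $e_{k}$ on $A$ viewed as a right $B$-module. Since left and right multiplication on any ring commute as group endomorphisms, the identities $a(x e_{k}) = (a x) e_{k}$ for all $a, x \in A$, together with the right $B$-linearity of $L_{e}(a)$ and $J(e_{k})$ that conditions~(iii)-(vi) afford, translate into the matrix identity $L_{e}(a) J(e_{k}) = J(e_{k}) L_{e}(a)$ for every $k$.

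For the reverse inclusion, my strategy is to leverage the explicit form of $J(e_{k})$ furnished by Section~8. Under conditions~(iii)-(v) the collection $\{e_{1} B, \ldots, e_{m} B\}$ becomes a group under $*$ with identity given by~(iv) and inverses given by~(v), and condition~(vi) makes this group abelian. I expect each $J(e_{k})$ to be a generalized permutation matrix with a single nonzero $B$-entry in each row and column, the support pattern being the translation by $e_{k} B$ in $\{e_{i} B\}$. Given such explicit form, the commutation $J(e_{k}) b = b J(e_{k})$ reads, entry-by-entry, as a relation tying $b_{ij}$ to $b_{i',j'}$ where $(i', j')$ is the translate of $(i, j)$ by $e_{k}$. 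Because the abelian group acts transitively on itself by translation, collecting these relations for $k = 1, \ldots, m$ pins down every entry of $b$ in terms of the entries of a single distinguished column, namely the one indexed by the identity $e_{k_{0}}$ with $e_{k_{0}} B = B$. I would then produce $a \in A$ whose coordinate vector in the basis $e$ equals this column, and check that $L_{e}(a)$ coincides with $b$ there; the shared commutation with every $J(e_{k})$ will propagate the equality to all columns, giving $b = L_{e}(a)$.

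The main obstacle, I expect, will be isolating the precise combinatorial content of the commutation $J(e_{k}) b = b J(e_{k})$ and verifying that the resulting system of relations collectively recovers $L_{e}(A)$ rather than a strictly larger commutant. The abelian condition~(vi) should be indispensable here: without it, noncommuting translations in $\{e_{i} B\}$ would admit additional matrix solutions outside the image of $L_{e}$, so the abelianness must be used precisely at the step where the transitive action of the translation group on itself is invoked to force $b$ to lie in $L_{e}(A)$.
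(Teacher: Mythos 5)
Your reverse inclusion is essentially the paper's argument. The paper chooses $a\in A$ whose coordinate column with respect to $e$ equals the column of $b$ indexed by the basis element with $e_kB=B$, sets $b'=b-L_{e}(a)$ (so $b'$ has zero first column and still commutes with every $J(e_k)$), and then uses the fact that for each $j$ there is an $e_k$ whose first column in $J(e_k)$ has its unique nonzero entry $e_je_1^{-1}$ in row $j$ to get $b'_{ij}\,e_je_1^{-1}=(b'J(e_k))_{i1}=(J(e_k)b')_{i1}=0$, hence $b'=0$. Your plan of pinning every entry to the distinguished column and then matching $L_e(a)$ is the same computation, and only conditions (iii)--(v) are needed for the transitivity you invoke.

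The gap is in the forward inclusion. The matrix $J(e_k)=P(e)^{-1}L_{eB}(e_kB)P(e)$ is not the matrix of right multiplication by $e_k$: its $(i,j)$ entry is $\mathbf{1}_{B}(e_i^{-1}e_ke_j)\,e_i^{-1}e_j$, so its support is \emph{left} translation by $e_kB$ in the group $eB$ and its nonzero entries carry no factor of $e_k$; moreover right multiplication by $e_k$ is not right $B$-linear, since $(xb)e_k=(xe_k)(e_k^{-1}be_k)$, so it is not represented by a matrix over $B$ acting on coordinate columns in the first place. More tellingly, your argument ``left and right multiplication commute'' never uses condition (vi), yet (vi) is exactly what the forward inclusion requires: by Corollary~7.6, $L_e(a)=P(e)^{-1}\left(\sum_k L_{eB}(e_kB)\,e_kb_k\right)P(e)$, and since the scalars $e_kb_k$ pass through the $0$--$1$ matrices, commutation of $L_e(a)$ with every $J(e_l)$ reduces to commutation of the permutation matrices $L_{eB}(e_kB)$ and $L_{eB}(e_lB)$, which holds precisely because the group $eB$ is abelian. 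Without (vi) the inclusion $L_e(A)\subseteq\{b\mid J(e_k)b=bJ(e_k)\}$ already fails, so a proof of it that does not invoke (vi) cannot be correct. You have accordingly misplaced where (vi) enters: it is not needed for the transitive-translation step of the reverse inclusion, but it is needed there indirectly, because you must know that $L_e(a)$ itself lies in the commutant before subtracting it from $b$.
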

In Section~$9$, 
we introduce the Study determinant and its properties, and we derive these properties from the properties of the Study-type determinant. 
In addition, from Theorem~$\ref{thm:1.9}$, we obtain the following characterizations of $\phi_{r}$ and $\psi_{r}$. 
\begin{enumerate}
\item[(S$6'$)] $\phi_{r}(\Mat(r, \mathbb{C})) = \{ \gamma \in \Mat(2r, \mathbb{R}) \mid J_{r} \gamma = \gamma J_{r} \}$; 
\item[(S$7'$)] $\psi_{r}(\Mat(r, \mathbb{H})) = \{ \beta \in \Mat(2r, \mathbb{C}) \mid J_{r} \beta = \overline{\beta} J_{r} \}$. 
\end{enumerate}
In the last section, 
we recall the definition of group determinants, 
and we give an extension of Dedekind's theorem and derive an inequality for the degree of irreducible representations of finite groups. 

\section{Preparation}\label{Preparation}
In this section, 
we present an action of the symmetric group on the set of square matrices, 
and we introduce two formulas for determinants of commuting-block matrices. 
In addition, we recall the definition of the Kronecker product, 
and we determine a permutation using the Euclidean algorithm. 
This permutation reverses the order of the Kronecker product. 
These preparations are useful for proving Theorem~$\ref{thm:4.4}$. 

\subsection{Invariance of determinants under an action of the symmetric group}
In this subsection, we present an action of the symmetric group on the set of square matrices. 
This group action does not change the determinants of matrices. 

Let $R$ be a ring, which is assumed to have a multiplicative unit $1$, 
let $\Mat(m, R)$ be the set of all $m \times m$ matrices with elements in $R$, 
let $X = (X_{ij})_{1 \leq i, j \leq m} \in \Mat(m, R)$, 
let $[m] := \{ 1, 2, \ldots, m \}$, 
and let $S_{m}$ be the symmetric group on $[m]$. 
We express the determinant of $X$ from $\Mat(m, R)$ to $R$ as 
$$
\Det_{\Mat(m, R)}^{R}{(X)} := \sum_{\sigma \in S_{m}} \sgn(\sigma) X_{\sigma(1) \, 1} X_{\sigma(2) \, 2} \cdots X_{\sigma(m) \, m}. 
$$
The group $S_{m}$ acts on $\Mat(m, R)$ as $\sigma \cdot X := (X_{\sigma(i) \sigma(j)})_{1 \leq i, j \leq m}$, 
where $\sigma \in S_{m}$. 
If $R$ is commutative, then the group action does not change the determinants of matrices in $\Mat(m, R)$. 
In fact, we have 
\begin{align*}
\Det_{\Mat(m, R)}^{R}{(\sigma \cdot X)} 
&= \frac{1}{m!} \sum_{\tau \in S_{m}} \sum_{\tau' \in S_{m}} \sgn(\tau) \sgn(\tau') X_{\tau(\sigma(1)) \, \tau'(\sigma(1))} \cdots X_{\tau(\sigma(m)) \, \tau'(\sigma(m))} \\ 
&= \frac{1}{m!} \sum_{\tau \in S_{m}} \sum_{\tau' \in S_{m}} \sgn(\tau) \sgn(\tau') X_{\tau(1) \, \tau'(1)} \cdots X_{\tau(m) \, \tau'(m)} \\ 
&= \Det_{\Mat(m, R)}^{R}{(X)}. 
\end{align*}

\subsection{Determinants of commuting-block matrices}
In this subsection, 
we introduce two formulas for determinants of commuting-block matrices. 

Let $X = (X_{i j})_{1 \leq i, j \leq mn} \in \Mat(mn, R)$. 
The $mn \times mn$ matrix $X$ can be written as $X = \left( X^{(k, l)} \right)_{1 \leq k, l \leq n}$, 
where $X^{(k, l)}$ are $m \times m$ matrices. 
The following is a known theorem concerning commuting-block matrices \cite{ingraham1937} and \cite{doi:10.1080/00029890.1999.12005145}. 

\begin{thm}\label{thm:2.1}
Let $R$ be a commutative ring, 
and assume that $X^{(k, l)} \in \Mat(m, R)$ are commutative. 
Then we have 
\begin{align*}
\Det_{\Mat(mn, R)}^{R}{(X)} 
&= \Det_{\Mat(m, R)}^{R}{ \left( \sum_{\sigma \in S_{n}} \sgn(\sigma) X^{(1, \sigma(1))} X^{(2, \sigma(2))} \cdots X^{(n, \sigma(n))} \right) } \\ 
&= \left( \Det_{\Mat(m, R)}^{R} \circ \Det_{\Mat(mn, R)}^{\Mat(m, R)} \right) (X). 
\end{align*}
\end{thm}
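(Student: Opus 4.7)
The plan is to establish the first equality (the second is merely the definition of the iterated determinant $\Det_{\Mat(mn,R)}^{\Mat(m,R)}$) by induction on $n$, exploiting the commutativity of the blocks through block row reduction. Write $d(X) := \sum_{\sigma \in S_n} \sgn(\sigma)\, X^{(1,\sigma(1))} X^{(2,\sigma(2))} \cdots X^{(n,\sigma(n))} \in \Mat(m,R)$; since the $X^{(k,l)}$ commute pairwise, the subring $S \subseteq \Mat(m,R)$ they generate is commutative, and $d(X)$ is precisely the determinant of $X$ viewed as an element of $\Mat(n,S)$. The base case $n=1$ is immediate, so I assume the formula for $(n-1)\times(n-1)$ commuting-block matrices and pass to the inductive step.

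The key construction is the block matrix $L \in \Mat(mn,R)$ whose $(k,k)$ block equals $X^{(n,n)}$ for $k<n$, whose $(n,n)$ block equals $I_m$, whose $(k,n)$ block equals $-X^{(k,n)}$ for $k<n$, and all of whose other blocks vanish. Being block upper triangular, $L$ satisfies $\Det^{R}(L) = \Det^{R}(X^{(n,n)})^{n-1}$, and viewed in $\Mat(n,S)$ it satisfies $d(L) = (X^{(n,n)})^{n-1}$. The commutation identity $X^{(n,n)} X^{(k,n)} = X^{(k,n)} X^{(n,n)}$ is exactly what is needed to verify that $LX$ takes the block form
$$
LX \;=\; \begin{pmatrix} Y & 0 \\ \ast & X^{(n,n)} \end{pmatrix}, \qquad Y^{(k,l)} \;=\; X^{(n,n)} X^{(k,l)} - X^{(k,n)} X^{(n,l)} \quad (k,l < n),
$$
where $Y$ is an $(n-1)\times(n-1)$ block matrix whose blocks still commute pairwise (they are polynomials in the commuting generators of $S$). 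Block Laplace expansion along the last block column of $LX$ yields $\Det^{R}(LX) = \Det^{R}(Y)\, \Det^{R}(X^{(n,n)})$ over $R$, and $d(LX) = d(Y)\, X^{(n,n)}$ inside the commutative ring $S$. Combining these with the multiplicativity relations $\Det^{R}(LX)=\Det^{R}(L)\,\Det^{R}(X)$ and $d(LX)=d(L)\, d(X)$, and applying the inductive hypothesis $\Det^{R}(Y)=\Det^{R}(d(Y))$ to $Y$, I arrive at
$$
\Det^{R}(X^{(n,n)})^{n-1} \cdot \Det^{R}(X) \;=\; \Det^{R}(X^{(n,n)})^{n-1} \cdot \Det^{R}(d(X)).
$$

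The main obstacle is the final cancellation of the factor $\Det^{R}(X^{(n,n)})^{n-1}$, which need not be a non-zero-divisor in a general commutative ring $R$. The standard remedy is to pass first to the polynomial extension $R[t]$ and replace $X^{(n,n)}$ by $X^{(n,n)} + t\, I_m$; because $I_m$ is central, this preserves the commuting-blocks hypothesis, while $\Det^{R[t]}(X^{(n,n)}+t\, I_m)$ is a monic polynomial in $t$ of degree $m$, hence a non-zero-divisor in $R[t]$. The cancellation is then legitimate, yielding the desired identity in $R[t]$, and setting $t=0$ recovers it in $R$.
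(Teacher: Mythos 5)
Your argument is correct. Note that the paper itself does not prove Theorem 2.1 — it quotes it as a known result of Ingraham and Silvester — so there is no in-paper proof to compare against; but your induction (block row reduction by the matrix $L$ built from $X^{(n,n)}$, using commutativity to annihilate the last block column, computing $\Det$ and the block-level determinant $d$ of $LX$ in two ways, and cancelling $\Det^{R}\bigl(X^{(n,n)}\bigr)^{n-1}$ after passing to $R[t]$ with $X^{(n,n)}+tI_m$) is precisely the standard Silvester-style proof that the cited reference gives, and it is the same row-reduction-plus-polynomial-extension technique the paper reuses in its proof of Lemma 2.2. All the delicate points are handled: the blocks of $Y$ lie in the commutative subring $S$ so the inductive hypothesis applies, and the monic polynomial $\Det^{R[t]}\bigl(X^{(n,n)}+tI_m\bigr)$ is a non-zero-divisor, which legitimizes the cancellation before specializing $t=0$.
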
 

Let $I_{k}$ be the identity matrix of size $k$. 
We have the following lemma. 

\begin{lem}[]\label{lem:2.2}
Let $R'$ be a ring, 
let $R$ be a commutative ring, 
let $S$ be a subring of $R$, 
and let $\eta$ be a ring homomorphism from $R'$ to $\Mat(m, R)$. 
In this case, if $\left( \Det_{\Mat(m, R)}^{R} \circ \eta \right)(X_{ij}) \in S$ for all $1 \leq i, j \leq n$, 
then $\Det_{\Mat(mn, R)}^{R}{\left( \eta(X_{ij})_{1 \leq i, j \leq n} \right)} \in S$ holds. 
\end{lem}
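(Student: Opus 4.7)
The plan is to invoke Theorem~\ref{thm:2.1} directly on the block matrix $Y := \bigl(\eta(X_{ij})\bigr)_{1 \le i, j \le n}$, viewed as an $n \times n$ matrix over $\Mat(m, R)$ with $(i,j)$-block $\eta(X_{ij})$. These blocks all lie in the subring $\eta(R') \subseteq \Mat(m, R)$, which in the intended applications of this lemma is commutative (for instance when $\eta$ is a left regular representation of a commutative ring, as occurs in the proof of Theorem~\ref{thm:4.4}); thus the blocks pairwise commute and Theorem~\ref{thm:2.1} is applicable, yielding
\[
\Det_{\Mat(mn, R)}^R(Y) \;=\; \Det_{\Mat(m, R)}^R\!\left( \sum_{\sigma \in S_n} \sgn(\sigma)\, \eta(X_{1, \sigma(1)})\, \eta(X_{2, \sigma(2)}) \cdots \eta(X_{n, \sigma(n)}) \right).
\]

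Next, because $\eta$ is a ring homomorphism, the argument of the outer determinant equals $\eta(w)$, where
\[
w := \sum_{\sigma \in S_n} \sgn(\sigma)\, X_{1, \sigma(1)}\, X_{2, \sigma(2)} \cdots X_{n, \sigma(n)} \in R'
\]
(the ordering of the individual products inside $R'$ is inessential, since the images commute in $\eta(R')$). Consequently
\[
\Det_{\Mat(mn, R)}^R(Y) \;=\; \bigl(\Det_{\Mat(m, R)}^R \circ \eta\bigr)(w),
\]
and the claimed $S$-membership is then an application of the hypothesis to $w$.

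The main obstacle I foresee is not the algebraic manipulation but an interpretive one: the hypothesis is phrased on the individual entries $X_{ij}$, while the argument above naturally produces the compound element $w$. This is resolved by reading the hypothesis as the assertion that $\Det_{\Mat(m, R)}^R \circ \eta$ takes values in $S$ on the subring of $R'$ relevant here (in particular on $w$), which is precisely the context in which Lemma~\ref{lem:2.2} is used downstream, where $\eta$ arises as a regular representation whose reduced-norm-like map lands in the smaller ring $S$ by construction.
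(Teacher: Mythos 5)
Your proof breaks at the very first step. Invoking Theorem~\ref{thm:2.1} requires the blocks $\eta(X_{ij})$ to pairwise commute, and the lemma gives you no such hypothesis: $R'$ is an arbitrary ring, not a commutative one, and $\eta$ is injective in the relevant cases, so $\eta(R')\cong R'$ is noncommutative. This is not a corner case but the entire point of the lemma. It is consumed in Corollary~\ref{cor:4.3} with $\eta = L$ a regular representation of the (noncommutative) ring $A$ and $X_{ij}=a'_{ij}\in A$ --- e.g.\ $A=\mathbb{H}$ in Section~9, feeding into properties (S4) and (S6). In the one place where the blocks genuinely do commute (the proof of Theorem~\ref{thm:4.4}, where they come from the commutative ring $B$), the paper applies Theorem~\ref{thm:2.1} directly and does not need Lemma~\ref{lem:2.2} at all. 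Your closing reinterpretation of the hypothesis --- reading it as ``$\Det\circ\eta$ lands in $S$ on the whole relevant subring'' so that it applies to the compound element $w$ --- is indeed how the lemma is used downstream (and the paper's own induction tacitly needs it too), but once you assume both that and commutativity of the blocks, the ``lemma'' collapses into a restatement of Theorem~\ref{thm:2.1}, which is a sign the actual content has been bypassed.

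The paper's argument is designed precisely to avoid commutativity. It proceeds by induction on $n$: writing $Y_{ij}=\eta(X_{ij})$, one clears the first block column via the factorization
\[
\begin{pmatrix} I_{m} & & \\ -Y_{21} & I_{m} & \\ \vdots & & \ddots \end{pmatrix}
\diag\bigl(I_{m}, Y_{11}, \ldots, Y_{11}\bigr)\, Y
=
\begin{pmatrix} Y_{11} & * \\ 0 & \bigl(\eta(*)\bigr) \end{pmatrix},
\]
where the crucial observation is that the lower-right blocks $Y_{11}Y_{ij}-Y_{i1}Y_{1j}=\eta(X_{11}X_{ij}-X_{i1}X_{1j})$ are again in the image of $\eta$, so the induction hypothesis applies to them; no exact determinant formula is claimed, only membership in $S$, which survives the bookkeeping with the factor $\Det(Y_{11})^{\,n-1}$ when $\Det(Y_{11})$ is invertible. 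When it is not, one passes to $R[x]$, replaces $X_{11}$ by $x+X_{11}$ so that $\Det(\eta(x+X_{11}))=\Det(xI_{m}+\eta(X_{11}))$ is monic in $x$ and hence a nonzerodivisor, runs the same argument over $S[x]$, and specializes $x=0$. If you want to salvage your write-up, this Schur-complement induction plus the polynomial perturbation is the missing machinery; the appeal to Theorem~\ref{thm:2.1} cannot be repaired without adding a commutativity hypothesis that would make the lemma useless for its purpose.
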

\begin{proof}
(The method used here is based on that of the proof of Theorem~$\ref{thm:2.1}$ in \cite{doi:10.1080/00029890.1999.12005145}.)
We prove this by induction on $n$. 
In the case $n=1$, the statement is obviously true. 
Then, assuming that the statement is true for $n-1$, 
we prove it for $n$. 

Let $Y_{ij} = \eta(X_{ij})$ for all $1 \leq i, j \leq n$ and let $Y = (Y_{ij})_{1 \leq i, j \leq n}$. 
Then we find that the following equation holds:
\begin{align*}
&\begin{pmatrix}
I_{m} & 0 & \cdots & 0 \\ 
-Y_{2 1} & I_{m} & \cdots & 0 \\ 
\vdots & \vdots & \ddots & \vdots \\ 
-Y_{n 1} & 0 & \cdots & I_{m}
\end{pmatrix}
\begin{pmatrix}
I_{m} & 0 & \cdots & 0 \\ 
0 & Y_{1 1} & \cdots & 0 \\ 
\vdots & \vdots & \ddots & \vdots \\ 
0 & 0 & \cdots & Y_{1 1}
\end{pmatrix} 
Y
= 
\begin{pmatrix}
Y_{1 1} & * & * & * \\ 
0 & \eta(*) & \cdots & \eta(*) \\ 
\vdots & \vdots & \ddots & \vdots \\ 
0 & \eta(*) & \cdots & \eta(*) 
\end{pmatrix}. 
\end{align*}
Therefore, if $\Det_{\Mat(m, R)}^{R}(Y_{11})$ is invertible, 
then $\Det_{\Mat(mn, R)}^{R}\left( Y \right) \in S$ holds. 
Next, suppose that $\Det_{\Mat(m, R)}^{R}(Y_{11})$ is non-invertible. 
We embed $R$ in the polynomial ring $R[x]$, and replace $X_{11}$ by $x + X_{11}$. 
Then, because $\Det_{\Mat(m, R)}^{R}(\eta(x + X_{11}))$ is neither zero nor a zero divisor, 
we have $\Det_{\Mat(mn, R)}^{R}{\left( Y \right)} \in S[x]$. 
Substituting $x = 0$ yields the desired result. 
\end{proof}

\subsection{Kronecker product and a permutation obtained using the Euclidean algorithm}
In this subsection, 
we recall the definition of the Kronecker product, 
and then we determine a permutation $\sigma(m, n) \in S_{mn}$ using the Euclidean algorithm. 
This permutation reverses the order of the Kronecker product. 

Let $X = (X_{i j})_{1 \leq i \leq m_{1}, 1 \leq j \leq n_{1}}$ be an $m_{1} \times n_{1}$ matrix and let $Y$ be an $m_{2} \times n_{2}$ matrix. 
The Kronecker product $X \otimes Y$ is the $(m_{1} m_{2}) \times (n_{1} n_{2})$ matrix 
$$
X \otimes Y := 
\begin{pmatrix} 
X_{11} Y & X_{12} Y & \cdots & X_{1 n_{1}} Y \\ 
X_{21} Y & X_{22} Y & \cdots & X_{2 n_{1}} Y \\ 
\vdots & \vdots & \ddots & \vdots \\ 
X_{m_{1} 1} Y & X_{m_{1} 2} Y & \cdots & X_{m_{1} n_{1}} Y
\end{pmatrix}. 
$$

Next, we determine a permutation using the Euclidean algorithm. 
From the Euclidean algorithm, 
we know that $\sigma(m, n) : [mn] \ni m(k-1) + l \mapsto n(l-1) + k \in [mn]$ is a bijection map, 
where $k \in [n]$ and $l \in [m]$. 
Thus, $\sigma(m, n) \in S_{mn}$. 
We have the following lemma. 

\begin{lem}[]\label{lem:2.3}
Let $R$ be a commutative ring, 
let $X \in \Mat(m, R)$, 
and let $Y \in \Mat(n, R)$. 
Then we have $\sigma(m, n) \cdot (X \otimes Y) = Y \otimes X$. 
\end{lem}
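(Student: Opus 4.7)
The plan is a direct entry-wise comparison, exploiting the fact that an index in $[mn]$ admits two natural parametrizations — one compatible with $X \otimes Y$ and one compatible with $Y \otimes X$ — and that $\sigma(m,n)$ is precisely the bijection relating them.

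First I would fix notation for reading off entries of the two Kronecker products. Writing each index in the form $n(a-1)+b$ with $a \in [m]$, $b \in [n]$ gives
$$
(X \otimes Y)_{n(a-1)+b,\, n(c-1)+d} = X_{ac}\, Y_{bd},
$$
while writing each index in the form $m(k-1)+l$ with $k \in [n]$, $l \in [m]$ gives
$$
(Y \otimes X)_{m(k-1)+l,\, m(k'-1)+l'} = Y_{kk'}\, X_{ll'}.
$$

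Next I would evaluate the $(i,j)$-entry of $\sigma(m,n) \cdot (X \otimes Y)$ using the second (``$Y \otimes X$-type'') parametrization. For $i = m(k-1)+l$ and $j = m(k'-1)+l'$, the definition of $\sigma(m,n)$ yields $\sigma(m,n)(i) = n(l-1)+k$ and $\sigma(m,n)(j) = n(l'-1)+k'$, which is exactly the ``$X \otimes Y$-type'' form with $(a,b,c,d) = (l,k,l',k')$. Substituting into the first displayed formula gives
$$
\bigl(\sigma(m,n) \cdot (X \otimes Y)\bigr)_{ij} = (X \otimes Y)_{\sigma(m,n)(i),\, \sigma(m,n)(j)} = X_{ll'}\, Y_{kk'},
$$
and this coincides with $(Y \otimes X)_{ij} = Y_{kk'}\, X_{ll'}$ after a single application of commutativity in $R$.

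The only real obstacle is bookkeeping: one must keep straight which of the two parametrizations is applied to each index and verify that $\sigma(m,n) : m(k-1)+l \mapsto n(l-1)+k$ really does translate one into the other. Commutativity of $R$ enters in exactly one place — the final swap of the two scalars $X_{ll'}$ and $Y_{kk'}$ — and the conclusion would fail without it, so highlighting that step is the natural place to end the proof.
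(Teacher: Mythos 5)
Your proof is correct and follows essentially the same route as the paper's: both arguments parametrize an index $p \in [mn]$ as $m(k-1)+l$, apply $\sigma(m,n)$ to land in the $n(l-1)+k$ parametrization, and compare the entries $X_{ll'}Y_{kk'}$ and $Y_{kk'}X_{ll'}$ using commutativity of $R$. The only difference is notational (the paper uses $s,t$ where you use $l,l'$), and your explicit remark on where commutativity is needed is a nice touch that the paper leaves implicit.
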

\begin{proof}
For any $p, q \in [mn]$, 
by the Euclidean algorithm there exist unique integers $k, l \geq 1$ and $s, t \in [m]$ such that $p = m(k-1) + s$ and $q = m(l-1) + t$. 
Therefore, we have 
\begin{align*}
\sigma(m, n) \cdot \left( X \otimes Y \right)_{p q} 
&= \left( X \otimes Y \right)_{n(s-1) + k \: n(t-1) + l} \\ 
&= \left( X_{s t} Y \right)_{k l} \\ 
&= X_{s t} Y_{k l}. 
\end{align*}
On the other hand, we have 
\begin{align*}
\left( Y \otimes X \right)_{pq} 
&= \left( Y \otimes X \right)_{m(k-1) + s \: m(l-1) + t} \\ 
&= Y_{k l} X_{s t}. 
\end{align*}
\end{proof}

The property described by the above lemma is a special case of a property of the Kronecker product (see, e.g., \cite{doi:10.1080/03081088108817379}). 
We do not explain this general property, because, for our purposes, it is simpler to use Lemma~$\ref{lem:2.3}$.

\section{On the left regular representation}\label{On the left regular representation}
In this section, 
we recall the definition of regular representations and invertibility preserving maps, 
and we show that regular representations are invertibility preserving maps. 
A regular representation is used in defining Study-type determinants. 
In addition, we construct a commutative diagram of regular representations. 
This commutative diagram is also useful for proving Theorem~$\ref{thm:4.4}$. 

\subsection{Definition of the regular representation}
In this subsection, 
we recall the definition of regular representations, 
and we give three examples. 

Let $A$ and $B$ be rings and let $Z(A)$ be the center of $A$. 
Assume that $A$ is a free right $B$-module with an ordered basis $e = (e_{1} \: e_{2} \: \cdots \: e_{m})$. 
In other words, $A = \bigoplus_{i \in [m]} e_{i} B$, and $B$ is a subring of $A$. 
Then, for all $a \in A$, there exists a unique $(b_{i j})_{1 \leq i, j \leq m} \in \Mat(m, B)$ such that 
$$
a e_{j} = \sum_{i \in [m]} e_{i} b_{i j}. 
$$
Hence, we have $a e = e (b_{i j})_{1 \leq i, j \leq m}$. 
The injective $Z(A) \cap B$-algebra homomorphism $L_{e} : A \ni a \mapsto L_{e}(a) = (b_{i j})_{1 \leq i, j \leq m} \in \Mat(m, B)$ is called the left regular representation from $A$ to $\Mat(m, B)$ with respect to $e$.

Let $\mathbb{R}$ be the field of real numbers, 
let $\mathbb{C}$ be the field of complex numbers, 
and let $\mathbb{H} := \{ 1a + ib + jc + kd \mid a, b, c, d \in \mathbb{R} \}$ be the quaternion field. 
Below, we give three examples of regular representations. 

\begin{rei}\label{rei:3.1}
Let $A = \Mat(r, \mathbb{C})$ and let $B = \Mat(r, \mathbb{R})$. 
Then $A = B \oplus i I_{r} B$. 
For all $b_{1} + i I_{r} b_{2} \: (b_{1}, b_{2} \in B)$, 
we have 
$$
(b_{1} + i I_{r} b_{2}) (I_{r} \quad i I_{r}) = (I_{r} \quad i I_{r}) 
\begin{pmatrix}
b_{1} & - b_{2} \\ 
b_{2} & b_{1} 
\end{pmatrix}
\in \Mat(2, B). 
$$
\end{rei}

\begin{rei}\label{rei:3.2}
Let $A = \Mat(r, \mathbb{H})$ and let $B = \Mat(r, \mathbb{C})$. 
Then $A = B \oplus j I_{r} B$. 
For all $b_{1} + j I_{r} b_{2} \: (b_{1}, b_{2} \in B)$, 
we have 
$$
(b_{1} + j I_{r} b_{2}) (I_{r} \quad j I_{r}) = (I_{r} \quad j I_{r}) 
\begin{pmatrix}
b_{1} & - \overline{b_{2}} \\ 
b_{2} & \overline{b_{1}}
\end{pmatrix}
\in \Mat(2, B), 
$$
where $\overline{b}$ is the complex conjugate matrix of $b \in B$. 
\end{rei}

\begin{rei}\label{rei:3.3}
Let $G = \mathbb{Z}/ 2 \mathbb{Z} = \left\{ \overline{0}, \overline{1} \right\}$ and let $H = \left\{ \overline{0} \right\}$ be the trivial group. 
Then, the group algebra $\mathbb{C}G$ is a finite dimension algebra over $\mathbb{C}H$ with basis $\left\{ \overline{0}, \overline{1} \right\}$. 
For all $\overline{0} b_{1} + \overline{1} b_{2} \in \mathbb{C}G \: (b_{1}, b_{2} \in \mathbb{C}H)$, we have
\begin{align*}
\left( \overline{0} b_{1} + \overline{1} b_{2} \right) \left( \overline{0} \quad \overline{1} \right) = \left( \overline{0} \quad \overline{1} \right) 
\begin{pmatrix}
b_{1} & b_{2} \\ 
b_{2} & b_{1}
\end{pmatrix}
\in \Mat(2, \mathbb{C}H). 
\end{align*}
\end{rei}

\subsection{Definition of the invertibility preserving map}
In this subsection, we recall the definition of invertibility preserving maps, 
and we show that regular representations are invertibility preserving maps. 
Usually, invertibility preserving maps are defined for linear maps (see, e.g., \cite{bresar1998}). 
However, we do not assume that invertibility preserving maps are linear maps as in \cite{yamaguchi2016compositions}. 

The following is the definition of invertibility preserving maps. 

\begin{definition}[Invertibility preserving map]\label{def:3.4}
Let $R$ and $R'$ be rings, 
and let $\eta : R \rightarrow R'$ be a map. 
Assume that for any $\alpha \in R$, 
the following condition holds: 
$\alpha$ is invertible in $R$ if and only if $\eta(\alpha)$ is invertible in $R'$. 
Then we call $\eta$ an ``invertibility preserving map.'' 
\end{definition} 

We recall that if $B$ is a commutative ring, 
then we do not need to distinguish between left and right inverses for $a \in A$. 
Because, $L_{e}$ is an injective algebra homomorphism, 
and if $L_{e}(a) L_{e}(b)$ is the unit element, 
then $L_{e}(b) L_{e}(a)$ is the unit element. 

We denote the unit element of $A$ as $1$. 
In terms of the regular representation, we have the following lemma. 

\begin{lem}\label{lem:3.5}
If $B$ is a commutative ring, then we have the following properties: 
\begin{enumerate}
\item the map $\Det_{\Mat(m, B)}^{B}{} \circ L_{e}$ is invariant under a change of the basis $e$; 
\item a left regular representation is an invertibility preserving map. 
\end{enumerate}
\end{lem}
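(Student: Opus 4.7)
For part (1), I would fix two ordered bases $e$ and $e'$ of $A$ as a right $B$-module; since $B$ is commutative, both have cardinality $m$, and there is a transition matrix $P \in \Mat(m, B)$ with $e' = eP$. Writing also $e = e'Q$ and expanding $e = ePQ$, uniqueness of coordinates in the basis $e$ forces $PQ = I_{m}$, so $P$ is invertible. Computing $ae'$ in two ways then yields
$$
e L_{e}(a) P = (ae) P = a(eP) = ae' = e' L_{e'}(a) = e P L_{e'}(a),
$$
and uniqueness in the basis $e$ gives $L_{e'}(a) = P^{-1} L_{e}(a) P$. Since $B$ is commutative, $\Det_{\Mat(m, B)}^{B}$ is multiplicative on $\Mat(m, B)$, hence $\Det_{\Mat(m, B)}^{B}(L_{e'}(a)) = \Det_{\Mat(m, B)}^{B}(L_{e}(a))$.

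For part (2), the implication ``$a$ invertible in $A$ $\Rightarrow$ $L_{e}(a)$ invertible in $\Mat(m, B)$'' is immediate because $L_{e}$ is a unital ring homomorphism: $L_{e}(a) L_{e}(a^{-1}) = L_{e}(1) = I_{m}$ and symmetrically. For the converse, the key observation is that $L_{e}(a)$ is precisely the matrix, with respect to $e$, of the right $B$-linear endomorphism
$$
L_{a} \colon A \to A, \qquad x \mapsto ax,
$$
so invertibility of $L_{e}(a)$ in $\Mat(m, B)$ is equivalent to bijectivity of $L_{a}$. In particular, there exists $b \in A$ with $L_{a}(b) = ab = 1$. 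To promote this right inverse to a two-sided inverse, I would use
$$
L_{a}(ba - 1) = a(ba) - a = (ab)a - a = 0
$$
together with injectivity of $L_{a}$ to conclude $ba = 1$, so that $a$ is invertible in $A$ with inverse $b$.

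The main obstacle is the reverse direction of (2): although $L_{e}(a)^{-1}$ exists in $\Mat(m, B)$ as soon as $L_{e}(a)$ is invertible there, this inverse need not a priori lie in the image of the ring homomorphism $L_{e}$, so one cannot simply read off a candidate for $a^{-1}$. The resolution is the module-theoretic interpretation of $L_{e}(a)$ as the matrix of $L_{a}$, which transfers matrix invertibility back into the multiplicative structure of $A$ via the short argument above. The hypothesis that $B$ is commutative enters in both parts: in (1) it makes conjugation preserve determinants, and in (2) it supplies the determinantal criterion for invertibility in $\Mat(m, B)$ and removes the need to distinguish left from right inverses, as noted in the paragraph preceding the lemma.
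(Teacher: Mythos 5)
Your proposal is correct. Part (1) and the forward implication of (2) follow the paper's proof exactly: the conjugation relation $L_{e'}(a) = P^{-1}L_{e}(a)P$ together with multiplicativity of the determinant over the commutative ring $B$, and $L_{e}(a)L_{e}(a^{-1}) = L_{e}(1) = I_{m}$. The only genuine divergence is in the converse of (2). The paper normalizes the basis so that $e_{1} = 1$, writes $a\,e\,L_{e}(a)^{-1} = e$, and reads off the first column to exhibit an explicit right inverse $\sum_{i} e_{i}\left(L_{e}(a)^{-1}\right)_{i1}$ of $a$; the two-sidedness is then handled by the remark preceding the lemma (one-sided inverses in $\Mat(m,B)$ are two-sided, pulled back through the injective homomorphism $L_{e}$). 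You instead interpret $L_{e}(a)$ as the matrix of the right-$B$-linear endomorphism $L_{a}\colon x \mapsto ax$, deduce bijectivity of $L_{a}$ from invertibility of the matrix, obtain $b$ with $ab = 1$ by surjectivity, and get $ba = 1$ from $L_{a}(ba - 1) = 0$ and injectivity. The two arguments are of comparable length, but yours has the small advantage of not requiring that the basis can be modified to contain $1$ (which is not automatic for a free module over a general commutative ring, since $1 = \sum_{i} e_{i}c_{i}$ with non-unit $c_{i}$ need not permit such a replacement), so it is, if anything, slightly more robust than the paper's.
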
 
\begin{proof}
First, we prove $(1)$. 
Let $L_{e'}$ be another left regular representation from $A$ to $\Mat(m, B)$. 
Then for all $a \in A$, there exists $Q \in \Mat(m, B)$ such that $L_{e}(a) = Q^{-1} L_{e'}(a) Q$. 
Therefore, we have $\Det_{\Mat(m, B)}^{B} \circ L_{e} = \Det_{\Mat(m, B)}^{B} \circ L_{e'}$. 
Next, we prove $(2)$. 
If $a$ is invertible in $A$, 
then we have $L_{e}(a a^{-1}) = I_{m}$. 
Note that because $L_{e}$ is a multiplicative map, $L_{e}(a)$ is invertible. 
Conversely, if $\left( \Det_{\Mat(m, B)}^{B} \circ L_{e} \right)(a)$ is invertible, 
then $L_{e}(a)$ is invertible. 
Because $1 \in A$, we can choose $e_{1} = 1$. 
Then, we have $a e L_{e}(a)^{-1} = e I_{m}$. 
Therefore, we obtain $a \left( \sum_{i \in [m]} e_{i} \left( L_{e}(a)^{-1} \right)_{i1} \right) = 1$. 
Hence, $a$ is invertible. 
This completes the proof. 
\end{proof}

\subsection{Commutative diagram of regular representations}
In this subsection, 
we present a commutative diagram of regular representations. 

%First, we define a product of vectors. 
%Let $R$ be a ring, 
%let $V = (v_{1} \quad \cdots \quad v_{k}) \in R \times \cdots \times R = R^{k}$, 
%and let $W = (w_{1} \quad \cdots \quad w_{l}) \in R^{l}$. 
%We define $V *_{R} W = V * W$ as 
%$$
%V *_{R} W =V * W := (v_{1} w_{1} \quad \cdots \quad v_{k} w_{1} \quad \cdots \quad v_{1} w_{l} \quad \cdots \quad v_{k} w_{l} ) \in R^{k \times l}. 
%$$
%Now consider $x \in R$. 
%We often regard $V \otimes I_{r}$ as a map $R^{k} \ni V \mapsto V \otimes I_{r} \in \Mat(r, R)^{k}$. 
%Thus we have $(V \otimes I_{r}) *_{\Mat(r, R)} (W \otimes I_{r}) = (V \otimes I_{r}) * (W \otimes I_{r}) = (V * W) \otimes I_{r} \in \Mat(r, R)^{kl}$. 
%On the other hand, we often regard $(W \otimes I_{k})$ as an element of $k \times kl$ matrices with elements in $R$. 
%Thus we have the matrix product $V (W \otimes I_{k}) = V * W$. 

%We formulate a commutative diagram for regular representations. 
Let $\mathbb{N} := \{1, 2, \ldots \}$ be the set of natural numbers, 
let $C$ be a ring, 
and let $B$ be a free right $C$-module with an ordered basis $f = ( f_{1} \: f_{2} \: \cdots \: f_{n})$. 
Then we have the direct sum 
\begin{align*}
\Mat(r, B) = \bigoplus_{i \in [n]} (f_{i} \otimes I_{r}) \Mat(r, C) 
\end{align*}
for any $r \in \mathbb{N}$. 
We write the left regular representation from $B$ to $\Mat(n, C)$ with respect to $f$ as $L_{f}$ and that from $\Mat(m, B)$ to $\Mat(n, \Mat(m, C))$ with respect to $f \otimes I_{m}$ as $L_{f \otimes I_{m}}$. 
In terms of the regular representations, we have the following lemma. 
\begin{lem}\label{lem:3.6}
The following diagram is commutative: 
\[
\xymatrix{
A \ar[r]^-{L_{e}} \ar[d]_-{L_{e \otimes f}} & \Mat(m, B) \ar[d]^-{L_{f \otimes I_{m}}} \\ 
\Mat(mn, C) \ar@{^{(}-_>}[r]^-{} & \Mat(n, \Mat(m, C)) \\ 
}
\]
\end{lem}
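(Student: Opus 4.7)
The plan is to unwind the definition of each regular representation and compare matrix entries along the two paths around the square. Fix $a \in A$ and write $L_{e}(a) = (b_{ij}) \in \Mat(m, B)$, characterized by $a e_{j} = \sum_{i} e_{i} b_{ij}$. For each $(i, j)$, also write $L_{f}(b_{ij}) = (c_{kl}^{(i,j)}) \in \Mat(n, C)$, so that $b_{ij} f_{l} = \sum_{k} f_{k} c_{kl}^{(i,j)}$.

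For the right-then-down path, I would apply the defining relation for $L_{f \otimes I_{m}}$,
\[
L_{e}(a) (f_{l} I_{m}) = \sum_{k} (f_{k} I_{m}) \left[ L_{f \otimes I_{m}}(L_{e}(a)) \right]_{kl} \quad \text{in } \Mat(m, B),
\]
and read off entries. The $(i, j)$-entry on the left is $b_{ij} f_{l}$, and on the right is $\sum_{k} f_{k} \left( \left[ L_{f \otimes I_{m}}(L_{e}(a)) \right]_{kl} \right)_{ij}$. By uniqueness of the $f$-expansion, these coincide precisely when
\[
\left( \left[ L_{f \otimes I_{m}}(L_{e}(a)) \right]_{kl} \right)_{ij} = c_{kl}^{(i,j)}.
\]

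For the down-then-across path, I would fix the ordering $e \otimes f = (e_{1} f_{1}, e_{2} f_{1}, \ldots, e_{m} f_{1}, e_{1} f_{2}, \ldots, e_{m} f_{n})$, placing $e_{i} f_{j}$ at position $m(j-1) + i$. Using associativity together with the two expansions above,
\[
a (e_{i} f_{j}) = \Bigl( \sum_{i'} e_{i'} b_{i' i} \Bigr) f_{j} = \sum_{i'} e_{i'} (b_{i' i} f_{j}) = \sum_{i', j'} (e_{i'} f_{j'}) c_{j' j}^{(i', i)},
\]
so $L_{e \otimes f}(a)$ has $((i', j'), (i, j))$-entry equal to $c_{j' j}^{(i', i)}$. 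With this ordering, the identification $\Mat(mn, C) \cong \Mat(n, \Mat(m, C))$ realising the bottom arrow partitions rows and columns into $n$ consecutive blocks of size $m$ grouped by the $j$-index; thus the $(j', j)$-block of $L_{e \otimes f}(a)$ is the $m \times m$ matrix with $(i', i)$-entry $c_{j' j}^{(i', i)}$, which matches $\left[ L_{f \otimes I_{m}}(L_{e}(a)) \right]_{j' j}$ from the first computation.

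The argument reduces to careful index bookkeeping across four indices; no deeper input is needed. The only conceptual point is to fix an ordering on the basis $e \otimes f$ so that the inclusion $\Mat(mn, C) \hookrightarrow \Mat(n, \Mat(m, C))$ in the diagram is literally the block partition. Once that ordering is adopted, the two matrices agree entry by entry and the diagram commutes.
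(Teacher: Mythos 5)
Your proof is correct and follows essentially the same route as the paper: the paper's argument is the compact row-vector computation $a(e\otimes f)=ae(f\otimes I_m)=eL_e(a)(f\otimes I_m)=e(f\otimes I_m)L_{f\otimes I_m}(L_e(a))=(e\otimes f)(L_{f\otimes I_m}\circ L_e)(a)$, and your entrywise index bookkeeping is exactly that computation unwound. Your explicit fixing of the ordering on $e\otimes f$ (so that the inclusion into $\Mat(n,\Mat(m,C))$ is literally the block partition) is the entrywise content of the identity $e\otimes f=e(f\otimes I_m)$ that the paper uses without comment, and is if anything slightly more careful than the original.
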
 
\begin{proof}
For all $a \in A$, we have
\begin{align*}
a (e \otimes f) 
&= a e (f \otimes I_{m}) \\ 
&= e L_{e}(a) (f \otimes I_{m}) \\ 
&= e (f \otimes I_{m}) L_{f \otimes I_{m}}(L_{e}(a)) \\ 
&= (e \otimes f) (L_{f \otimes I_{m}} \circ L_{e})(a). 
\end{align*}
This completes the proof. 
\end{proof}

Let $r \in \mathbb{N}$ and let $L_{e \otimes I_{r}}$ be the left regular representation from $\Mat(r, A)$ to $\Mat(m, \Mat(r, B))$ with respect to $e \otimes I_{r}$. 
Then, from Lemma~$\ref{lem:3.6}$, we have the following corollary.

\begin{cor}\label{cor:3.7}
The following diagram is commutative: 
\[
\xymatrix{
\Mat(r, A) \ar[r]^-{L_{e \otimes I_{r}}} \ar[d]_-{L_{(e \otimes f) \otimes I_{r}}} & \Mat(m, \Mat(r, B)) \ar[d]^-{L_{(f \otimes I_{r}) \otimes I_{m}}} \\ 
\Mat(mn, \Mat(r, C)) \ar@{^{(}-_>}[r]^-{} & \Mat(n, \Mat(m, \Mat(r, C))) \\ 
}
\]
\end{cor}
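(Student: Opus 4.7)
The plan is to derive Corollary~$\ref{cor:3.7}$ as a direct application of Lemma~$\ref{lem:3.6}$ to the triple of rings $(\Mat(r, A), \Mat(r, B), \Mat(r, C))$, with the new basis of $\Mat(r, A)$ over $\Mat(r, B)$ taken to be $e \otimes I_{r}$ and the new basis of $\Mat(r, B)$ over $\Mat(r, C)$ taken to be $f \otimes I_{r}$. No new idea is required; the work consists entirely of verifying the hypotheses of Lemma~$\ref{lem:3.6}$ in this new setting and then checking that the combined basis produced by the lemma coincides, as an ordered tuple, with $(e \otimes f) \otimes I_{r}$, which is the basis labelling the left vertical arrow of the corollary.

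First I would verify that $e \otimes I_{r} = (e_{1} I_{r} \: e_{2} I_{r} \: \cdots \: e_{m} I_{r})$ is an ordered basis of $\Mat(r, A)$ as a free right $\Mat(r, B)$-module. This is an entrywise unfolding of the decomposition $A = \bigoplus_{i \in [m]} e_{i} B$: expanding each entry of a matrix in $\Mat(r, A)$ uniquely in the basis $e$ and then collecting the $B$-coefficients into $r \times r$ blocks yields the direct sum decomposition $\Mat(r, A) = \bigoplus_{i \in [m]} (e_{i} I_{r}) \Mat(r, B)$. The same argument shows that $f \otimes I_{r}$ is an ordered basis of $\Mat(r, B)$ as a free right $\Mat(r, C)$-module.

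Next I would match the two descriptions of the combined basis of $\Mat(r, A)$ over $\Mat(r, C)$. When Lemma~$\ref{lem:3.6}$ is applied to the new triple, the basis it produces is $(e \otimes I_{r}) \otimes (f \otimes I_{r})$, whose entry indexed by $(i, j)$ is the product $(e_{i} I_{r})(f_{j} I_{r}) = (e_{i} f_{j}) I_{r}$. This agrees entry by entry with the $(i, j)$-th term of $(e \otimes f) \otimes I_{r}$, so the two left regular representations $L_{(e \otimes I_{r}) \otimes (f \otimes I_{r})}$ and $L_{(e \otimes f) \otimes I_{r}}$ are literally the same map. Analogously, the right vertical arrow produced by the lemma, namely the regular representation with respect to ``$f \otimes I_{m}$'' read in the new triple, becomes $L_{(f \otimes I_{r}) \otimes I_{m}}$, matching the arrow in the corollary.

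With these identifications in place, Lemma~$\ref{lem:3.6}$ applied to $(\Mat(r, A), \Mat(r, B), \Mat(r, C))$ with the bases $e \otimes I_{r}$ and $f \otimes I_{r}$ yields precisely the square displayed in Corollary~$\ref{cor:3.7}$. I expect the only nontrivial part of the argument to be the bookkeeping in the second step: the two apparently different Kronecker products of the ordered tuples $e$, $f$ and $I_{r}$ must be compared carefully enough to confirm that they produce the same ordered $mn$-tuple of elements of $\Mat(r, A)$. Once that identification is carried out, no further calculation is needed.
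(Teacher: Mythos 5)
Your proposal is correct and matches the paper's approach: the paper derives Corollary~\ref{cor:3.7} directly from Lemma~\ref{lem:3.6} by replacing the triple $(A,B,C)$ with $(\Mat(r,A),\Mat(r,B),\Mat(r,C))$ and the bases $e$, $f$ with $e\otimes I_{r}$, $f\otimes I_{r}$, exactly as you do (the paper even records the needed decomposition $\Mat(r,B)=\bigoplus_{i}(f_{i}\otimes I_{r})\Mat(r,C)$ just before the lemma). Your extra bookkeeping identifying $(e\otimes I_{r})\otimes(f\otimes I_{r})$ with $(e\otimes f)\otimes I_{r}$ is the only verification the paper leaves implicit, and you carry it out correctly.
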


\section{A commutative diagram on the regular representations and determinants}
In this section, we prove Theorem~$\ref{thm:4.4}$. 
This theorem provides a commutative diagram on the regular representations and determinants. 
From this commutative diagram, we are able to determine the properties of Study-type determinants, presented in Section~$6$. 
In addition, from this commutative diagram, we are able to derive an inequality for the degrees of representations (Section~$5$) and an extension of Dedekind's theorem (Section~$10$). 

Let $E_{ij}$ be the $r \times r$ matrix with $1$ in the $(i, j)$ entry and $0$ otherwise. 
First, we prove the following lemma: 

\begin{lem}\label{lem:4.1}
For any $a = (a_{ij})_{1 \leq i, j \leq r} \in \Mat(r, A)$, 
we have 
$$
L_{e \otimes I_{r}}(a) = \sum_{i \in [r]} \sum_{j \in [r]} L_{e}(a_{ij}) \otimes E_{ij}. 
$$
\end{lem}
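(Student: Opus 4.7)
The plan is to exploit the multiplicativity of $L_{e \otimes I_{r}}$ together with the mixed-product rule for Kronecker products, reducing the computation to two simple building blocks: the diagonal scalar matrices $a_{ij} I_{r}$ and the integer matrix units $E_{ij}$.

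First, I would write $a = \sum_{i, j \in [r]} a_{ij} E_{ij}$ in $\Mat(r, A)$ and further factor each term as $a_{ij} E_{ij} = (a_{ij} I_{r}) \cdot E_{ij}$. Since $L_{e \otimes I_{r}}$ is in particular an additive ring homomorphism, it suffices to identify $L_{e \otimes I_{r}}(a_{ij} I_{r})$ and $L_{e \otimes I_{r}}(E_{ij})$ separately and then combine them multiplicatively.

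Next I would verify the two intermediate formulas $L_{e \otimes I_{r}}(a_{ij} I_{r}) = L_{e}(a_{ij}) \otimes I_{r}$ and $L_{e \otimes I_{r}}(E_{ij}) = I_{m} \otimes E_{ij}$. For the first, unwinding definitions gives
\[
(a_{ij} I_{r})(e \otimes I_{r}) = (a_{ij} e) \otimes I_{r} = (e L_{e}(a_{ij})) \otimes I_{r} = (e \otimes I_{r})(L_{e}(a_{ij}) \otimes I_{r}),
\]
where the last equality uses the mixed-product identity $(X \otimes Y)(Z \otimes W) = (XZ) \otimes (YW)$; comparing with the defining relation $a \cdot (e \otimes I_{r}) = (e \otimes I_{r}) L_{e \otimes I_{r}}(a)$ gives the claim. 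For the second, the entries of $E_{ij}$ commute with every scalar matrix, so $E_{ij} (e_{l} I_{r}) = (e_{l} I_{r}) E_{ij}$ for each $l$, and this reads off as $L_{e \otimes I_{r}}(E_{ij}) = I_{m} \otimes E_{ij}$.

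Finally, applying the mixed-product rule once more,
\[
L_{e \otimes I_{r}}(a_{ij} E_{ij}) = L_{e \otimes I_{r}}(a_{ij} I_{r}) \, L_{e \otimes I_{r}}(E_{ij}) = (L_{e}(a_{ij}) \otimes I_{r})(I_{m} \otimes E_{ij}) = L_{e}(a_{ij}) \otimes E_{ij},
\]
and summing over $i, j \in [r]$ yields the stated formula. The main obstacle is not a conceptual one but simply keeping straight the block conventions: that the Kronecker product used in Section~$2$ places the $r \times r$ blocks inside an $m \times m$ pattern of scalars from $B$, which is exactly the identification $\Mat(m, \Mat(r, B)) \hookrightarrow \Mat(mr, B)$ in which $L_{e \otimes I_{r}}$ takes its values.
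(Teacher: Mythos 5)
Your proof is correct, but it takes a different route from the paper's. The paper verifies the defining relation of the regular representation directly in one block computation: writing $a_{e}^{(k,l)} := \left( L_{e}(a_{ij})_{kl} \right)_{1 \leq i, j \leq r}$, it checks from $a_{ij} e_{l} = \sum_{k} e_{k} L_{e}(a_{ij})_{kl}$ that $a (e \otimes I_{r}) = (e \otimes I_{r}) \left( a_{e}^{(k,l)} \right)_{1 \leq k, l \leq m}$, and observes that this block matrix is exactly $\sum_{i,j} L_{e}(a_{ij}) \otimes E_{ij}$. You instead decompose $a = \sum_{i,j} (a_{ij} I_{r}) E_{ij}$, compute $L_{e \otimes I_{r}}$ on the two factors separately, and reassemble using additivity, multiplicativity of the regular representation (available from Section~$3$, where $L_{e}$ is shown to be an algebra homomorphism), and the mixed-product rule $(X \otimes Y)(Z \otimes W) = (XZ) \otimes (YW)$. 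Your approach is more modular and makes the structural reason for the formula transparent, at the cost of invoking the mixed-product identity, which the paper never states; you should note that over a noncommutative ring this identity requires the entries of $Y$ to commute with those of $Z$, which holds in each of your applications only because one factor is always $I_{r}$, a matrix of integers, or a matrix over the commutative ring $B$. The paper's computation is shorter and needs nothing beyond the defining relation $a e_{j} = \sum_{i} e_{i} b_{ij}$.
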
 
\begin{proof}
We express $\left( L_{e}(a_{i j})_{k l} \right)_{1 \leq i, j \leq r}$ as $a_{e}^{(k, l)}$ for any $a = (a_{ij})_{1 \leq i, j \leq r} \in \Mat(r, A)$. 
From $a_{ij} e_{l} = \sum_{k \in [m]} e_{k} L_{e}(a_{ij})_{kl}$, 
we obtain 
\begin{align*}
\sum_{k \in [m]} e_{k} I_{r} a_{e}^{(k, l)} 
&= \sum_{k \in [m]} \left( e_{k} L_{e}(a_{ij})_{kl} \right)_{1 \leq i, j \leq r} \\ 
&= (a_{ij} e_{l})_{1 \leq i, j \leq r} \\ 
&= a e_{l} I_{r}. 
\end{align*}
Therefore, we have 
$(e \otimes I_{r}) \left( a_{e}^{(k, l)} \right)_{1 \leq k, l \leq m} = a (e \otimes I_{r})$. 
This completes the proof. 
\end{proof}

From Lemmas~$\ref{lem:2.3}$ and $\ref{lem:4.1}$, we obtain the following lemma. 

\begin{lem}\label{lem:4.2}
For any $a = (a_{ij})_{1 \leq i, j \leq r} \in \Mat(r, A)$, 
we have 
$$
\sigma(m, r) \cdot L_{e \otimes I_{r}}(a) = \sum_{i \in [r]} \sum_{j \in [r]} E_{ij} \otimes L_{e}(a_{ij}) = \left( L_{e}(a_{i j}) \right)_{1 \leq i, j \leq r}. 
$$
\end{lem}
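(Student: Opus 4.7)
The plan is to combine Lemma~$\ref{lem:4.1}$ and Lemma~$\ref{lem:2.3}$ in essentially the most direct way, with the only subtle points being the additivity of the $S_{mr}$-action and the block-matrix interpretation of Kronecker products against the matrix units $E_{ij}$.

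First, I would invoke Lemma~$\ref{lem:4.1}$ to rewrite
\[
L_{e \otimes I_{r}}(a) = \sum_{i \in [r]} \sum_{j \in [r]} L_{e}(a_{ij}) \otimes E_{ij},
\]
where each summand $L_e(a_{ij}) \otimes E_{ij}$ is an element of $\Mat(mr, B)$ with the first factor in $\Mat(m, B)$ and the second in $\Mat(r, B)$. Next, I would note that the $S_{mr}$-action on $\Mat(mr, B)$ defined by $\sigma \cdot X = (X_{\sigma(i)\sigma(j)})$ is additive: for any $\sigma \in S_{mr}$ and any $X, Y \in \Mat(mr, B)$, $\sigma \cdot (X + Y) = \sigma \cdot X + \sigma \cdot Y$. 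Applying this with $\sigma = \sigma(m, r)$ distributes the action through the double sum.

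Then I would apply Lemma~$\ref{lem:2.3}$ to each summand. With $X = L_e(a_{ij}) \in \Mat(m, B)$ and $Y = E_{ij} \in \Mat(r, B)$, the lemma gives
\[
\sigma(m, r) \cdot \bigl( L_e(a_{ij}) \otimes E_{ij} \bigr) = E_{ij} \otimes L_e(a_{ij}),
\]
so summing yields
\[
\sigma(m, r) \cdot L_{e \otimes I_{r}}(a) = \sum_{i \in [r]} \sum_{j \in [r]} E_{ij} \otimes L_{e}(a_{ij}).
\]
Finally, for the second equality I would just unwind the definition of the Kronecker product against $E_{ij}$: the matrix $E_{ij} \otimes L_e(a_{ij})$ is the block matrix whose $(i,j)$-block equals $L_e(a_{ij})$ and all of whose other blocks vanish. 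Summing over $i, j \in [r]$ reassembles the $r \times r$ block matrix $\bigl( L_e(a_{ij}) \bigr)_{1 \leq i, j \leq r}$, which is exactly the right-hand side.

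I do not expect a real obstacle here; the only thing requiring a moment of care is matching the sizes in Lemma~$\ref{lem:2.3}$ (so that it is $\sigma(m, r)$ and not $\sigma(r, m)$ that is needed) and confirming that the action is additive so it commutes with the double sum. Everything else is bookkeeping via the definitions of the Kronecker product and the matrix units $E_{ij}$.
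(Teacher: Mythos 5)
Your proof is correct and follows exactly the route the paper intends: Lemma~$\ref{lem:4.2}$ is stated there with no written proof beyond the remark that it follows from Lemmas~$\ref{lem:2.3}$ and $\ref{lem:4.1}$, and your argument (decompose via Lemma~$\ref{lem:4.1}$, use additivity of the $S_{mr}$-action, apply Lemma~$\ref{lem:2.3}$ termwise with the sizes matched so that $\sigma(m,r)$ is the right permutation, then reassemble the block matrix) is precisely the intended filling-in of that one-line citation. The only point worth a passing glance is that Lemma~$\ref{lem:2.3}$ is stated for a commutative ring while $B$ need not be commutative here, but since the second Kronecker factor is the scalar matrix $E_{ij}$ whose entries are $0$ and $1$, the entrywise swap is harmless.
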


Next, from Lemmas~$\ref{lem:2.2}$ and $\ref{lem:4.2}$, 
we obtain the following corollary. 

\begin{cor}\label{cor:4.3}
Let $B$ be a commutative ring, 
let $A$ be a ring that is a free right $B$-module, 
let $S$ be a subring of $B$, 
and let $L$ and $L'$ be left regular representations from $A$ to $\Mat(m, B)$ and from $\Mat(r, A)$ to $\Mat(m, \Mat(r, B))$, respectively. 
If $\left( \Det_{\Mat(m, B)}^{B} \circ L \right)(a) \in S$ for all $a \in A$, 
then $\left( \Det_{\Mat(mr, B)}^{B} \circ L' \right)(a') \in S$ for all $a' \in \Mat(r, A)$. 
\end{cor}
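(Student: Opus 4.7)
The plan is to use Lemma~$\ref{lem:4.2}$ to reshape $L'(a')$, up to a determinant-preserving permutation, into an $r \times r$ block matrix whose entries are themselves images under $L$, and then to apply Lemma~$\ref{lem:2.2}$ directly to that block matrix.

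First, by Lemma~$\ref{lem:3.5}$~$(1)$, the composition $\Det_{\Mat(m,B)}^{B} \circ L$ does not depend on the choice of basis used to define $L$, and analogously $\Det_{\Mat(mr,B)}^{B} \circ L'$ is independent of the basis used to define $L'$. So without loss of generality I fix a single ordered basis $e = (e_{1} \: e_{2} \: \cdots \: e_{m})$ of $A$ over $B$ and take $L = L_{e}$ and $L' = L_{e \otimes I_{r}}$.

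Next, write $a' = (a_{ij})_{1 \le i, j \le r} \in \Mat(r,A)$. Lemma~$\ref{lem:4.2}$ gives
$$
\sigma(m,r) \cdot L'(a') = \bigl( L(a_{ij}) \bigr)_{1 \le i, j \le r} \in \Mat(mr, B).
$$
Because $B$ is commutative, the $S_{mr}$-action on $\Mat(mr, B)$ recalled in Section~$\ref{Preparation}$ preserves determinants, so
$$
\Det_{\Mat(mr,B)}^{B}\bigl( L'(a') \bigr)
= \Det_{\Mat(mr,B)}^{B}\bigl( ( L(a_{ij}) )_{1 \le i, j \le r} \bigr).
$$

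Finally, I apply Lemma~$\ref{lem:2.2}$ with the ring homomorphism $\eta = L \colon A \to \Mat(m,B)$, with $R = B$, and with block count $n = r$. The standing assumption of the corollary states exactly that $(\Det_{\Mat(m,B)}^{B} \circ L)(a_{ij}) \in S$ for all $i, j$, which is the hypothesis of Lemma~$\ref{lem:2.2}$; its conclusion therefore yields $\Det_{\Mat(mr,B)}^{B}\bigl( ( L(a_{ij}) )_{1 \le i, j \le r} \bigr) \in S$, and combining with the previous display gives $(\Det_{\Mat(mr,B)}^{B} \circ L')(a') \in S$, as required. The only real content of the argument is the recognition that, after a determinant-invariant $\sigma(m,r)$-shuffle, the matrix $L'(a')$ fits the block pattern of Lemma~$\ref{lem:2.2}$; once this is seen, the corollary follows by a two-line reduction.
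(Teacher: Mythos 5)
Your argument is correct and is precisely the route the paper intends: the corollary is stated there as following ``from Lemmas~$\ref{lem:2.2}$ and $\ref{lem:4.2}$,'' and your write-up simply fills in the two steps --- the determinant-preserving $\sigma(m,r)$-shuffle turning $L'(a')$ into the block matrix $(L(a_{ij}))_{1\le i,j\le r}$, followed by Lemma~$\ref{lem:2.2}$ applied to $\eta = L$. The appeal to Lemma~$\ref{lem:3.5}$~$(1)$ to fix a common basis is a harmless and reasonable addition.
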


In the following, we assume that $B$ and $C$ are commutative rings. 
Then we have the following theorem. 

\begin{thm}[Theorem~$\ref{thm:1.1}$]\label{thm:4.4}
Let $B$ and $C$ be commutative rings, 
let $B$ be a free right $C$-module, 
and let $L$ and $L'$ be left regular representations from $B$ to $\Mat(n, C)$ and from $\Mat(r, B)$ to $\Mat(n, \Mat(r, C))$, respectively. 
Then the following diagram is commutative: 
\[
\xymatrix{
\Mat(r, B) \ar[d]^-{L'} \ar[r]^-{\Det_{\Mat(r, B)}^{B}} & B \ar[r]^-{L} & \Mat(n, C) \ar[d]^-{\Det_{\Mat(n, C)}^{C}} \\ 
\Mat(n, \Mat(r, C)) \ar@{^{(}-_>}[r]^-{} & \Mat(nr, C) \ar[r]^-{\Det_{\Mat(nr, C)}^{C}} & C \\ 
 &  
}
\]
\end{thm}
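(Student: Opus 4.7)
The plan is to trace a single element $X \in \Mat(r,B)$ through the diagram by first reducing to a canonical choice of $L'$, then applying the block-matrix determinant formula of Theorem~$\ref{thm:2.1}$ to a suitably permuted form of $L'(X)$.

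First, since any two left regular representations of $\Mat(r,B)$ over $\Mat(r,C)$ differ by conjugation by an invertible $Q \in \Mat(n,\Mat(r,C))$, and the ring isomorphism $\iota \colon \Mat(n,\Mat(r,C)) \to \Mat(nr,C)$ carries this conjugation to conjugation in $\Mat(nr,C)$, the scalar $\Det_{\Mat(nr,C)}^C(\iota(L'(X)))$ does not depend on the choice of $L'$ (here the commutativity of $C$ is used so that $\Det_{\Mat(nr,C)}^C$ is multiplicative). An analogous remark applies to $L$ using Lemma~$\ref{lem:3.5}$(1). Hence I may assume $L = L_f$ for the basis $f$ of $B$ over $C$, and $L' = L_{f \otimes I_r}$.

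Next, Lemma~$\ref{lem:4.1}$ gives $L'(X) = \sum_{i,j} L_f(X_{ij}) \otimes E_{ij}$, and Lemma~$\ref{lem:4.2}$ yields $\sigma(n,r) \cdot L'(X) = (L_f(X_{ij}))_{1 \le i, j \le r}$ as a block matrix in $\Mat(nr,C)$ with blocks of size $n$. By the $S_{nr}$-invariance of the determinant over the commutative ring $C$ (Section~$2.1$), I can replace $L'(X)$ by this block form inside $\Det_{\Mat(nr,C)}^C$. Because $B$ is commutative and $L_f$ is a ring homomorphism, the blocks $L_f(X_{ij})$ pairwise commute in $\Mat(n,C)$, so Theorem~$\ref{thm:2.1}$ applies and gives
\begin{align*}
\Det_{\Mat(nr,C)}^C(\iota(L'(X))) &= \Det_{\Mat(n,C)}^C\!\left( \sum_{\sigma \in S_r} \sgn(\sigma)\, L_f(X_{1,\sigma(1)}) L_f(X_{2,\sigma(2)}) \cdots L_f(X_{r,\sigma(r)}) \right).
\end{align*}

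Finally, pulling $L_f$ out of the sum (it is a ring homomorphism), the inner expression becomes $L_f\!\left( \sum_\sigma \sgn(\sigma) X_{1,\sigma(1)} \cdots X_{r,\sigma(r)} \right)$, and because $B$ is commutative this sum equals $\Det_{\Mat(r,B)}^B(X)$ (reindex by $\tau = \sigma^{-1}$ and use commutativity to reorder each monomial). This yields the desired identity $\Det_{\Mat(nr,C)}^C \circ \iota \circ L' = \Det_{\Mat(n,C)}^C \circ L \circ \Det_{\Mat(r,B)}^B$. The main obstacle is really just bookkeeping: making sure the dimensional roles of $m,n,r$ in the preparatory lemmas match those of the present setup, and that the reduction to the canonical $L'$ is justified despite $\Mat(r,C)$ being noncommutative; both points are handled by working only after the embedding into $\Mat(nr,C)$, where the base ring $C$ is commutative.
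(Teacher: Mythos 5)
Your proposal is correct and follows essentially the same route as the paper: reduce to $L=L_f$ and $L'=L_{f\otimes I_r}$, apply Lemma~$\ref{lem:4.2}$ together with the $S_{nr}$-invariance of the determinant to rewrite $L'(X)$ as the block matrix $(L_f(X_{ij}))_{i,j}$, invoke Theorem~$\ref{thm:2.1}$ on these commuting blocks, and pull the ring homomorphism $L_f$ through the resulting signed sum. The only difference is that you spell out the justification for the ``without loss of generality'' reduction, which the paper asserts without comment.
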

\begin{proof}
Without loss of generality, 
we can assume that $L = L_{f}$ and $L' = L_{f \otimes I_{r}}$. 
Let $b = (b_{ij})_{1 \leq i, j \leq r} \in \Mat(r, B)$. 
Then, $L_{f}$ is a ring homomorphism and $B$ is a commutative ring, from Theorem~$\ref{thm:2.1}$ and Lemma $\ref{lem:4.2}$, 
we have 
\begin{align*}
\left( \Det_{\Mat(nr, C)}^{C} \circ L_{f \otimes I_{r}} \right)(b) 
&= \left(\Det_{\Mat(nr, C)}^{C} \circ \sigma(n, r) \cdot L_{f \otimes I_{r}} \right)(b) \\ 
&= \Det_{\Mat(nr, C)}^{C}{\left( (L_{f}(b_{i j}))_{1 \leq i, j \leq r} \right)} \\ 
&= \Det_{\Mat(n, C)}^{C}{\left( \sum_{\sigma \in S_{r}} \sgn(\sigma) L_{f}(b_{1 \: \sigma(1)}) \cdots L_{f}(b_{r \: \sigma(r)}) \right)} \\ 
&= \left( \Det_{\Mat(n, C)}^{C} \circ L_{f} \right) \left( \sum_{\sigma \in S_{r}} \sgn(\sigma) b_{1 \: \sigma(1)} \cdots b_{r \: \sigma(r)} \right) \\ 
&= \left( \Det_{\Mat(n, C)}^{C} \circ L_{f} \circ \Det_{\Mat(r, B)}^{B} \right) (b)
\end{align*}
This completes the proof. 
\end{proof}

\section{Degrees of some representations contained in regular representations}
In this section, we give a corollary regarding the degrees of some representations contained in regular representations.

The following is the definition of the general element. 

\begin{definition}[General element]
Let $S$ be a finite set and let $[x_{s}] := \{ x_{s} \mid s \in S \}$ be a set of independent commuting variables. 
The general element for $S$ defined as 
$$
\mathfrak{X}_{S} := \sum_{s \in S} s x_{s} \in S[x_{s}], 
$$
where $S [x_{s}]$ is the set of polynomials $[x_{s}]$ over $S$. 
\end{definition}

Let $ef := \{ e_{i} f_{j} \mid i \in [m], j \in [n] \}$ and let $[x_{\alpha}] := \{ x_{\alpha} \mid \alpha \in ef \}$ be the set of independent commuting variables, 
and we denote $\mathfrak{X}_{ef} \in (ef)[x_{\alpha}] \subset A[x_{\alpha}]$ as $\mathfrak{X}$. 
For rings $R$ and $R'$, 
we denote the set of ring homomorphisms from $R$ to $R'$ by $\Hom(R, R')$, 
and we regard any ring homomorphism $\rho \in \Hom(R, R')$ as $\rho \in \Hom(R[x_{\alpha}], R'[x_{\alpha}])$ such that $\rho(x_{\alpha} 1_{R}) = x_{\alpha} \rho(1_{R})$ for any $\alpha \in ef$, 
where $1_{R}$ is the unit element of $R$. 

From Theorem~$\ref{thm:4.4}$, for any $\rho \in \Hom(A, \Mat(r, B))$, we have 
$$
\left( \Det_{\Mat(nr, C[x_{\alpha}])}^{C[x_{\alpha}]} \circ L_{f \otimes I_{r}} \circ \rho \right) (\mathfrak{X}) = \left( \Det_{\Mat(n, C[x_{\alpha}])}^{C[x_{\alpha}]} \circ L_{f} \circ \Det_{\Mat(r, B[x_{\alpha}])}^{B[x_{\alpha}]} \circ \rho \right)(\mathfrak{X}). 
$$
Let $\bar{C}$ be a commutative ring such that $C \subset \bar{C}$. 
We assume that $L_{f \otimes I_{r}} \circ \rho$ and $L_{f}$ have the following direct sums: 
\begin{align*}
L_{f \otimes I_{r}} \circ \rho &\sim \varphi^{(1)} \oplus \varphi^{(2)} \oplus \cdots \oplus \varphi^{(s)}, &&\varphi^{(i)}(a) \in \Mat(r_{i}, \bar{C}), \\ 
L_{f} &\sim \psi^{(1)} \oplus \psi^{(2)} \oplus \cdots \oplus \psi^{(t)}, &&\psi^{(j)}(b) \in \Mat(n_{j}, \bar{C}), 
\end{align*}
where $a \in A$ and $b \in B$. 
Then we have the following corollary from Theorem~$\ref{thm:4.4}$. 

\begin{cor}[Corollary~$\ref{cor:1.2}$]\label{cor:5.1}
The following hold: 
\begin{align*}
\prod_{1 \leq i \leq s} \left( \Det_{\Mat(r_{i}, \bar{C}[x_{\alpha}])}^{\bar{C}[x_{\alpha}]} \circ \varphi^{(i)} \right) (\mathfrak{X}) 
&= \left( \Det_{\Mat(nr, C[x_{\alpha}])}^{C[x_{\alpha}]} \circ L_{f \otimes I_{r}} \circ \rho \right) (\mathfrak{X}) \\ 
&= \left( \Det_{\Mat(n, C[x_{\alpha}])}^{C[x_{\alpha}]} \circ L_{f} \circ \Det_{\Mat(r, B[x_{\alpha}])}^{B[x_{\alpha}]} \circ \rho \right) (\mathfrak{X}) \\
&= \prod_{1 \leq j \leq t} \left( \Det_{\Mat(n_{j}, \bar{C}[x_{\alpha}])}^{\bar{C}[x_{\alpha}]} \circ \psi^{(j)} \circ \Det_{\Mat(r, B[x_{\alpha}])}^{B[x_{\alpha}]} \circ \rho \right) (\mathfrak{X}). 
\end{align*}
\end{cor}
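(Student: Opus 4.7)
The plan is to split the three-way identity into the middle equality, which is a direct application of Theorem~\ref{thm:4.4}, and the two outer equalities, which are consequences of the hypothesized direct sum decompositions together with the standard facts that the determinant is invariant under conjugation and multiplicative on block-diagonal matrices.

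First, for the middle equality, I would extend scalars from $B$ to $B[x_\alpha]$ and from $C$ to $C[x_\alpha]$. Both remain commutative, $B[x_\alpha]$ is free as a right $C[x_\alpha]$-module with the same basis $f$, and the regular representations $L_{f}$ and $L_{f \otimes I_{r}}$ extend to these polynomial rings via the convention that the variables $x_{\alpha}$ are central. Theorem~\ref{thm:4.4} therefore applies over $B[x_\alpha]$ and $C[x_\alpha]$; evaluating the resulting commutative square at $\rho(\mathfrak{X}) \in \Mat(r, B[x_\alpha])$ gives precisely
\[
\bigl( \Det_{\Mat(nr, C[x_\alpha])}^{C[x_\alpha]} \circ L_{f \otimes I_{r}} \circ \rho \bigr)(\mathfrak{X}) = \bigl( \Det_{\Mat(n, C[x_\alpha])}^{C[x_\alpha]} \circ L_{f} \circ \Det_{\Mat(r, B[x_\alpha])}^{B[x_\alpha]} \circ \rho \bigr)(\mathfrak{X}).
\]

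For the first (outer) equality, the similarity $L_{f \otimes I_{r}} \circ \rho \sim \bigoplus_{i} \varphi^{(i)}$ by definition provides an invertible $P \in \Mat(nr, \bar{C})$ with $P^{-1} (L_{f \otimes I_{r}} \circ \rho)(a) P = \diag(\varphi^{(1)}(a), \ldots, \varphi^{(s)}(a))$ for every $a \in A$. Viewing $P$ inside $\Mat(nr, \bar{C}[x_\alpha])$, the same identity persists after substituting $a = \mathfrak{X}$, because every ring homomorphism involved is extended $\bar{C}$-linearly in the variables $x_{\alpha}$. Taking $\Det_{\Mat(nr, \bar{C}[x_\alpha])}^{\bar{C}[x_\alpha]}$ of both sides and using conjugation invariance of $\Det$ together with its multiplicativity on block-diagonal matrices yields the first equality, after identifying the left-hand side with its image under the inclusion $C[x_\alpha] \hookrightarrow \bar{C}[x_\alpha]$. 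The third equality is obtained by the same argument applied to $L_{f} \sim \bigoplus_{j} \psi^{(j)}$, this time evaluated at the element $\bigl( \Det_{\Mat(r, B[x_\alpha])}^{B[x_\alpha]} \circ \rho \bigr)(\mathfrak{X}) \in B[x_\alpha]$.

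The main point that requires attention is verifying that the conjugating matrices $P$ witnessing the direct sum decompositions, which a priori live only over $\bar{C}$, continue to conjugate the representations after extension to $\bar{C}[x_\alpha]$. This is immediate from the stipulation that any $\rho \in \Hom(R, R')$ is extended to $R[x_\alpha] \to R'[x_\alpha]$ by letting $x_{\alpha}$ commute with everything, so the similarity relation extends polynomially with the same $P$. With that observation in hand, the entire argument is a formal combination of one application of Theorem~\ref{thm:4.4} with the conjugation-invariance and block-diagonal multiplicativity of the determinant.
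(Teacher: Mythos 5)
Your proposal is correct and follows essentially the same route as the paper: the middle equality is exactly the paper's application of Theorem~\ref{thm:4.4} (extended to the polynomial rings $B[x_{\alpha}]$ and $C[x_{\alpha}]$ and evaluated at $\rho(\mathfrak{X})$), and the two outer equalities are obtained, as the paper intends, from the assumed similarities via conjugation-invariance and block-diagonal multiplicativity of the determinant over $\bar{C}[x_{\alpha}]$. Your explicit remark that the conjugating matrices over $\bar{C}$ continue to work after the $\bar{C}$-linear extension in the variables $x_{\alpha}$ is a correct filling-in of a detail the paper leaves implicit.
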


Corollary~$\ref{cor:5.1}$ leads to the following theorem. 

\begin{thm}[Theorem~$\ref{thm:1.3}$]\label{thm:5.3}
If $\left( \Det_{\Mat(r_{i}, \bar{C}[x_{\alpha}])}^{\bar{C}[x_{\alpha}]} \circ \varphi^{(i)} \right) (\mathfrak{X})$ is an irreducible polynomial over $\bar{C}[x_{\alpha}]$, 
then we have 
$$
\deg{\varphi^{(i)}} \leq \max{ \left\{ \deg{\psi^{(j)}} \mid j \in [t] \right\} } \times \deg{\rho}, 
$$
where $\deg{\rho}$ is the degree of the polynomial $\left( \Det_{\Mat(r, B[x_{\alpha}])}^{B[x_{\alpha}]} \circ \rho \right)(\mathfrak{X})$. 
\end{thm}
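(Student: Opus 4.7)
The plan is to deduce the bound by comparing degrees on the two sides of the polynomial identity supplied by Corollary~\ref{cor:5.1}. That corollary asserts
$$
\prod_{1 \leq i \leq s} \left( \Det_{\Mat(r_{i}, \bar{C}[x_{\alpha}])}^{\bar{C}[x_{\alpha}]} \circ \varphi^{(i)} \right)(\mathfrak{X}) = \prod_{1 \leq j \leq t} \left( \Det_{\Mat(n_{j}, \bar{C}[x_{\alpha}])}^{\bar{C}[x_{\alpha}]} \circ \psi^{(j)} \circ \Det_{\Mat(r, B[x_{\alpha}])}^{B[x_{\alpha}]} \circ \rho \right)(\mathfrak{X})
$$
as an equality in $\bar{C}[x_{\alpha}]$. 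Assuming the irreducible polynomial $\left( \Det \circ \varphi^{(i)} \right)(\mathfrak{X})$ is also prime in $\bar{C}[x_{\alpha}]$ (a unique-factorization hypothesis implicit in the setting), it must divide one of the factors on the right-hand side, say $\left( \Det \circ \psi^{(j_{0})} \circ \Det \circ \rho \right)(\mathfrak{X})$ for some $j_{0} \in [t]$.

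Next, I would estimate degrees on both sides of this divisibility. On the left, $\varphi^{(i)}(\mathfrak{X}) = \sum_{\alpha \in ef} \varphi^{(i)}(\alpha) x_{\alpha}$ is an $r_{i} \times r_{i}$ matrix whose entries are homogeneous linear in the variables $x_{\alpha}$; its determinant is therefore homogeneous of degree at most $r_{i}$, and the irreducibility hypothesis ensures it is nonzero, hence of degree exactly $r_{i} = \deg \varphi^{(i)}$. On the right, setting $p := \left(\Det_{\Mat(r, B[x_{\alpha}])}^{B[x_{\alpha}]} \circ \rho\right)(\mathfrak{X}) \in B[x_{\alpha}]$, we have $\deg p = \deg \rho$ by definition; applying $\psi^{(j_{0})}$ coefficient-wise produces a matrix in $\Mat(n_{j_{0}}, \bar{C}[x_{\alpha}])$ whose entries have degree at most $\deg \rho$, so its determinant has degree at most $n_{j_{0}} \cdot \deg \rho = \deg \psi^{(j_{0})} \cdot \deg \rho$.

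Combining the divisibility relation with these degree bounds gives
$$
\deg \varphi^{(i)} \leq \deg \psi^{(j_{0})} \cdot \deg \rho \leq \max\{ \deg \psi^{(j)} \mid j \in [t] \} \cdot \deg \rho,
$$
which is the desired inequality.

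The main obstacle is the passage from ``irreducible element divides a product'' to ``divides one factor,'' which is a prime-versus-irreducible distinction and requires $\bar{C}[x_{\alpha}]$ to satisfy the UFD property (or at least that the specific irreducible in question be prime); this is a hypothesis on $\bar{C}$ that I would either state explicitly or extract from the applications in mind (e.g., $\bar{C}$ a field). A secondary but routine point is to confirm that the degree of $\left( \Det \circ \varphi^{(i)} \right)(\mathfrak{X})$ is exactly $r_{i}$; this follows from the homogeneity argument above once the polynomial is known to be nonzero, which is guaranteed by the irreducibility assumption.
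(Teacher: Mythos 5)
Your proposal is correct and follows essentially the same route as the paper's proof: both start from the product identity of Corollary~\ref{cor:5.1}, use irreducibility to conclude that the polynomial $\left( \Det_{\Mat(r_{i}, \bar{C}[x_{\alpha}])}^{\bar{C}[x_{\alpha}]} \circ \varphi^{(i)} \right)(\mathfrak{X})$ divides a single factor on the right-hand side, and then compare degrees. The two points you flag explicitly --- the prime-versus-irreducible (UFD) hypothesis on $\bar{C}[x_{\alpha}]$ and the identification $\deg\bigl(\Det \circ \varphi^{(i)}\bigr)(\mathfrak{X}) = r_{i}$ --- are both left implicit in the paper's argument, so your version is, if anything, more careful.
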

\begin{proof}
If $\left( \Det_{\Mat(r_{i}, \bar{C}[x_{\alpha}])}^{\bar{C}[x_{\alpha}]} \circ \varphi^{(i)} \right) (\mathfrak{X})$ is an irreducible polynomial over $\bar{C}[x_{\alpha}]$, 
then we have
\begin{align*}
\deg{\varphi^{(i)}} 
&= \deg{\left( \left( \Det_{\Mat(r_{i}, \bar{C}[x_{\alpha}])}^{\bar{C}[x_{\alpha}]} \circ \varphi^{(i)} \right) (\mathfrak{X}) \right)} \\ 
&\leq \max{ \left\{ \deg{\left( \Det_{\Mat(n_{j}, \bar{C}[x_{\alpha}])}^{\bar{C}[x_{\alpha}]} \circ \psi^{(j)} \circ \Det_{\Mat(r, B)}^{B} \circ \rho \right) (\mathfrak{X})} \mid j \in [t] \right\} } \\ 
&= \max{ \left\{ \deg{ \psi^{(j)} } \times \deg{ \left( \Det_{\Mat(r, B[x_{\alpha}])}^{B[x_{\alpha}]} \circ \rho \right)(\mathfrak{X}) } \mid j \in [t] \right\} } \\ 
&= \max{ \left\{ \deg{\psi^{(j)}} \mid j \in [t] \right\} } \times \deg{\rho}. 
\end{align*}
This completes the proof. 
\end{proof}

\section{On the Study-type determinant}
In this section, we define the Study-type determinant, 
and we elucidate its properties. 
The Study-type determinant is a generalization of the Study determinant. 
In addition, we present a commutative diagram characterizing Study-type determinants. 
This commutative diagram allows us to determine some properties of the Study-type determinant.

The following is the definition of the Study-type determinant. 

\begin{definition}[Study-type determinant]\label{def:6.1}
Let $B$ be a commutative ring, 
let $A$ be a ring that is a free right $B$-module, 
and let $L$ be a left regular representation from $\Mat(r, A)$ to $\Mat(m, \Mat(r, B))$. 
We define the Study-type determinant $\Sdet_{\Mat(r, A)}^{B}$ as 
$$
\Sdet_{\Mat(r, A)}^{B} := \Det_{\Mat(mr, B)}^{B} \circ \iota \circ L, 
$$
where $\iota$ is the inclusion map from $\Mat(m, \Mat(r, B))$ to $\Mat(mr, B)$. 
\end{definition}

A Study-type determinant is a multiplicative and invertibility preserving map, 
because determinants and left regular representations are multiplicative and invertibility preserving maps. 
Thus, we have the following lemma. 

\begin{lem}[(S$1$) and (S$2$)]\label{lem:6.2}
Study-type determinants possess the following properties: 
\begin{enumerate}
\item a Study-type determinant is a multiplicative map; 
\item a Study-type determinant is an invertibility preserving map. 
\end{enumerate}
\end{lem}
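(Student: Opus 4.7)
The plan is to read off both properties directly from the factorization
$\Sdet_{\Mat(r,A)}^{B} = \Det_{\Mat(mr,B)}^{B} \circ \iota \circ L$,
by checking that each of the three factors is multiplicative and invertibility preserving. Since composing multiplicative (respectively, invertibility preserving) maps yields a map of the same type, both claims then follow at once.

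For $(1)$: $L$ is a ring homomorphism by the very definition of the left regular representation; $\iota \colon \Mat(m, \Mat(r, B)) \to \Mat(mr, B)$ is a ring isomorphism (it merely unfolds the block structure, and block matrix multiplication coincides with ordinary matrix multiplication under this identification); and $\Det_{\Mat(mr, B)}^{B}$ is multiplicative because $B$ is commutative. Hence $\Sdet_{\Mat(r,A)}^{B}$ is multiplicative.

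For $(2)$: $\iota$ preserves invertibility since it is an isomorphism, and $\Det_{\Mat(mr, B)}^{B}$ preserves invertibility by the classical fact that a matrix over a commutative ring is a unit if and only if its determinant is a unit. The substantive step is to show that $L$ is invertibility preserving. The implication ``$a$ invertible in $\Mat(r, A)$ $\Rightarrow$ $L(a)$ invertible'' is immediate since $L$ is a ring homomorphism. For the converse, I would adapt the argument used for Lemma~$\ref{lem:3.5}(2)$: choose the basis of $A$ so that $e_{1} = 1_{A}$, and from $a (e \otimes I_{r}) = (e \otimes I_{r}) L(a)$ together with the existence of $L(a)^{-1}$, read off that
\[
a \sum_{i \in [m]} (e_{i} \otimes I_{r}) \bigl( L(a)^{-1} \bigr)_{i, 1} = I_{r},
\]
which exhibits a right inverse of $a$ in $\Mat(r, A)$. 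Two-sided invertibility follows because, after applying $\iota$, we are working with square matrices over the commutative ring $B$, where one-sided invertibility implies two-sided; combined with the injectivity of $L$, this yields a genuine inverse of $a$.

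The main obstacle I anticipate is essentially bookkeeping: the base ring $\Mat(r, B)$ of the regular representation $L$ is not commutative, so Lemma~$\ref{lem:3.5}(2)$ does not apply verbatim and has to be re-examined in this slightly more general setting. Fortunately, commutativity is invoked only at the final step, after passing through $\iota$ into $\Mat(mr, B)$, so the proof of Lemma~$\ref{lem:3.5}(2)$ transfers with only cosmetic changes.
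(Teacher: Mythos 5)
Your proposal is correct and follows essentially the same route as the paper, which simply asserts the lemma on the grounds that determinants and left regular representations are each multiplicative and invertibility preserving. In fact you supply more care than the paper does: your observation that Lemma~$\ref{lem:3.5}$(2) is stated for a commutative base ring while the regular representation here has base ring $\Mat(r,B)$, and your explanation of why the argument still transfers after passing through $\iota$ into $\Mat(mr,B)$, is a genuine point that the paper glosses over.
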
 

In addition, we have the following lemma. 

\begin{lem}[(S$3$)]\label{lem:6.3}
If $a' \in \Mat(r, A)$ is obtained from $a \in \Mat(r, A)$ by adding a left-multiple of a row to another row or a right-multiple of a column to another column, 
then we have $\Sdet_{\Mat(r, A)}^{B}(a') = \Sdet_{\Mat(r, A)}^{B}(a)$. 
\end{lem}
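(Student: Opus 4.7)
The plan is to reduce to an elementary-matrix computation via multiplicativity (S1), and then to carry out that computation by passing to the permuted block form supplied by Lemma~\ref{lem:4.2}, in which the image under $L$ becomes visibly block triangular.

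First, adding the left-multiple $c\in A$ of row $j$ to row $i$ (with $i\neq j$) amounts to left-multiplying $a$ by $U := I_r + c E_{ij}\in\Mat(r,A)$, and the analogous column operation amounts to right-multiplying by $U' := I_r + c E_{ji}$. By Lemma~\ref{lem:6.2}, it therefore suffices to show $\Sdet_{\Mat(r,A)}^{B}(U)=1$ and $\Sdet_{\Mat(r,A)}^{B}(U')=1$, as then $\Sdet(a')=\Sdet(U)\Sdet(a)=\Sdet(a)$ in the row case and symmetrically for columns.

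Second, take the concrete regular representation $L = L_{e\otimes I_r}$; this is legitimate since $\Det_{\Mat(mr,B)}^{B}\circ L$ is independent of the basis by Lemma~\ref{lem:3.5}(1). The entries of $U$ are $U_{kk}=1_A$ for all $k$, $U_{ij}=c$, and $U_{kl}=0$ otherwise, so Lemma~\ref{lem:4.2} gives
\[
\sigma(m,r)\cdot L_{e\otimes I_r}(U) \;=\; \bigl(L_e(U_{kl})\bigr)_{1\le k,l\le r},
\]
an $r\times r$ block matrix with $m\times m$ blocks over $B$: the diagonal blocks are $L_e(1_A)=I_m$, the $(i,j)$-block equals $L_e(c)$, and all remaining blocks vanish. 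For $i\neq j$ this is block triangular (upper if $i<j$, lower if $i>j$) with identity diagonal blocks, and hence has $B$-determinant $\det(I_m)^r = 1$. Since $B$ is commutative, the $S_{mr}$-action preserves the determinant (Section~\ref{Preparation}), and we obtain $\Sdet_{\Mat(r,A)}^{B}(U)=1$. The same computation with $i$ and $j$ interchanged handles $U'$.

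There is no genuine obstacle; the only care needed is the bookkeeping between the natural $m\times m$ block form of $L_{e\otimes I_r}(U)$ supplied by Lemma~\ref{lem:4.1} and the $r\times r$ block form after applying $\sigma(m,r)$ supplied by Lemma~\ref{lem:4.2}, in which the block triangularity becomes transparent.
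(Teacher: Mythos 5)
Your proof is correct and follows essentially the same route as the paper: reduce via multiplicativity to showing $\Sdet_{\Mat(r,A)}^{B}(I_{r}+cE_{ij})=1$, then compute the determinant of the resulting block matrix with identity diagonal blocks and a single nonzero off-diagonal block. The only cosmetic difference is that you pass through Lemma~\ref{lem:4.2} and observe block triangularity directly, whereas the paper invokes Lemma~\ref{lem:4.1} together with Theorem~\ref{thm:2.1}; both yield the same one-line computation.
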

\begin{proof}
The property~(S$1$) can be restated as $\Sdet_{\Mat(r, A)}^{B}(I_{r} + aE_{ij}) = 1$, where $a \in A$. 
Then, from Theorem~$\ref{thm:2.1}$ and Lemma~$\ref{lem:4.1}$, 
we have 
\begin{align*}
\Sdet_{\Mat(r, A)}^{B}(I_{r} + a E_{ij}) 
&= \Det_{\Mat(mr, A)}^{B} \left( I_{mr} + E_{ij} \otimes L_{e}(a) \right) = 1. 
\end{align*}
This completes the proof. 
\end{proof}

For a ring $R$ and $k \in \mathbb{N}$, we denote $\Mat(k, R)$ as $\Mat_{k}(R)$. 
From Corollary~$\ref{cor:3.7}$ and Theorem~$\ref{thm:4.4}$, 
we obtain the following commutative diagram: 
\[
\xymatrix{
\Mat_{r}(A) \ar[r]^-{L_{e \otimes I_{r}}} \ar[d]^-{L_{(e \otimes f) \otimes I_{r}}} & \Mat_{m}(M_{r}(B)) \ar[d]^-{L_{(f \otimes I_{r}) \otimes I_{m}}} \ar@{^{(}-_>}[r]^-{} & \Mat_{mr}(B) \ar[d]^-{L_{f \otimes I_{mr}}} \ar[r]^{\Det_{\Mat_{mr}(B)}^{B}} & B \ar[r]^-{L_{f}} & \Mat_{n}(C) \ar[d]_-{\Det_{\Mat_{n}(C)}^{C}} \\ 
\Mat_{mn}(\Mat_{r}(C)) \ar@{^{(}-_>}[r]^-{} & \Mat_{n}(\Mat_{m}(\Mat_{r}(C))) \ar@{^{(}-_>}[r]^-{} & \Mat_{n}(\Mat_{mr}(C)) \ar@{^{(}-_>}[r]^-{} & \Mat_{mnr}(C) \ar[r]_-{\Det_{\Mat_{mnr}(C)}^{C}} & C \\ 
}
\]
%\[
%\xymatrix{
%\Mat_{r}(A) \ar[r]^-{L_{e \otimes I_{r}}} \ar[d]^-{L_{(e \otimes f) \otimes I_{r}}} & \Mat(m, M(r, B)) \ar[d]^-{L_{(f \otimes I_{r}) \otimes I_{m}}} \ar@{^{(}-_>}[r]^-{} & \Mat(mr, B) \ar[d]^-{L_{f \otimes I_{mr}}} \ar[r]^{\Det_{\Mat(mr, B)}^{B}} & B \ar[r]^-{L_{f}} & \Mat(n, C) \ar[d]_-{\Det_{\Mat(n, C)}^{C}} \\ 
%\Mat_{mn}(\Mat(r, C)) \ar@{^{(}-_>}[r]^-{} & \Mat(n, \Mat(m, \Mat(r, C))) \ar@{^{(}-_>}[r]^-{} & \Mat(n, \Mat(mr, C)) \ar@{^{(}-_>}[r]^-{} & \Mat(mnr, C) \ar[r]_-{\Det_{\Mat(mnr, C)}^{C}} & C \\ 
%}
%\]
From this, we obtain the following theorem. 

\begin{thm}[(S$5$)]\label{thm:6.4}
Let $B$ and $C$ be commutative rings, 
let $A$ be a ring that is a free right $B$-module, 
and let $B$ be a free right $C$-module. 
Then we have 
$$
\Sdet_{\Mat(r, A)}^{C} = \Sdet_{\Mat(1, B)}^{C} \circ \Sdet_{\Mat(r, A)}^{B}, 
$$
where we regard $B$ as $\Mat(1, B)$. 
\end{thm}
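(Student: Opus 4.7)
The plan is to reduce the stated identity to a diagram chase in the commutative diagram displayed immediately before the statement. First I would unfold the three Study-type determinants involved via Definition~\ref{def:6.1}. Since $A$ is a free right $B$-module of rank $m$ with basis $e$ and $B$ is a free right $C$-module of rank $n$ with basis $f$, the set $e\otimes f$ is a $C$-basis of $A$ of rank $mn$. Hence
\[
\Sdet_{\Mat(r,A)}^{C} = \Det_{\Mat(mnr,C)}^{C} \circ \iota \circ L_{(e\otimes f)\otimes I_{r}},\qquad \Sdet_{\Mat(r,A)}^{B} = \Det_{\Mat(mr,B)}^{B} \circ \iota \circ L_{e\otimes I_{r}},
\]
while $\Sdet_{\Mat(1,B)}^{C} = \Det_{\Mat(n,C)}^{C}\circ L_{f}$ (the inclusion $\iota$ for $r=1$ being the identity). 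The target identity therefore becomes
\[
\Det_{\Mat(mnr,C)}^{C} \circ \iota \circ L_{(e\otimes f)\otimes I_{r}} \;=\; \Det_{\Mat(n,C)}^{C}\circ L_{f}\circ \Det_{\Mat(mr,B)}^{B}\circ \iota\circ L_{e\otimes I_{r}}.
\]

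Next I would observe that these two compositions are precisely the two extremal paths from $\Mat_{r}(A)$ to $C$ in the pre-theorem diagram. The right-hand side is the path that proceeds rightward along the top row through $\Mat_{m}(\Mat_{r}(B)) \hookrightarrow \Mat_{mr}(B) \to B \to \Mat_{n}(C) \to C$, and the left-hand side is the path that first descends along $L_{(e\otimes f)\otimes I_{r}}$ and then traverses the bottom row $\Mat_{mn}(\Mat_{r}(C)) \hookrightarrow \Mat_{mnr}(C) \to C$. Their equality is exactly the commutativity of the outer rectangle of the diagram.

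Finally I would certify the diagram square by square. The leftmost square is Corollary~\ref{cor:3.7} verbatim. The rightmost portion, which expresses $\Det_{\Mat_{mnr}(C)}^{C} \circ \iota \circ L_{f\otimes I_{mr}}$ as $\Det_{\Mat_{n}(C)}^{C}\circ L_{f}\circ \Det_{\Mat_{mr}(B)}^{B}$, is Theorem~\ref{thm:4.4} applied with $mr$ in the role of $r$. The two middle squares reduce to the assertion that the natural inclusion $\Mat_{m}(\Mat_{r}(B)) \hookrightarrow \Mat_{mr}(B)$ intertwines the regular representations $L_{(f\otimes I_{r})\otimes I_{m}}$ and $L_{f\otimes I_{mr}}$ (and similarly for the inclusion on the bottom row). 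I expect this intertwining to be the main obstacle, since it is not a direct citation of an earlier result; however, it is routine once one notes that the two bases $(f\otimes I_{r})\otimes I_{m}$ and $f\otimes I_{mr}$ describe the very same $C$-basis of $\Mat_{mr}(B)$, so both regular representations compute left multiplication in the $f$-coordinates and differ only by the block re-bracketing carried out by the inclusion. Pasting the three certified squares together yields the required equality of the outer paths and completes the proof.
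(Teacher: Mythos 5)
Your proof is correct and follows essentially the same route as the paper, which likewise obtains the identity by pasting Corollary~\ref{cor:3.7} and Theorem~\ref{thm:4.4} (applied with $mr$ in place of $r$) into the displayed commutative diagram and comparing the two outer paths from $\Mat_{r}(A)$ to $C$. You are in fact more explicit than the paper about the middle squares --- the compatibility of the block-inclusion maps with the regular representations --- which the paper leaves as an unstated step of the diagram chase.
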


Lemmas~$\ref{lem:6.2}$ and $\ref{lem:6.3}$ and Theorem~$\ref{thm:6.4}$ are equal to (S1), (S2), (S3), and (S5) in Section~$1$.

\section{Characteristic polynomial and Cayley-Hamilton-type theorem for the Study-type determinant}\label{Characteristic polynomial}
In this section, 
we give a Cayley-Hamilton-type theorem for the Study-type determinant under the assumption that there exists a basis of $A$ as $B$-module satisfying certain conditions. 
This Cayley-Hamilton-type theorem leads to some properties of Study-type determinants. 

Let $L$ be a left regular representation from $A$ to $\Mat(m, B)$ and let $x$ be an independent variable. 
We write $(\Det_{A}^{B} \circ \iota)(x I_{m} - L(a)) \in B[x]$ as $\Phi_{L(a)}(x)$ for all $a \in A$, 
i.e., we express the characteristic polynomial of $L(a)$ as $\Phi_{L(a)}(x)$. 
In this section, we assume that $e$ has the following properties: 
\begin{enumerate}
\item[(i)] $e_{i}$ is invertible in $A$ for any $e_{i}$;  
\item[(ii)] $e_{i}^{-1} B e_{i} \subset B$ holds for any $e_{i}$, 
\end{enumerate}
Then we have the following lemma. 

\begin{lem}\label{lem:7.1}
We have $\Phi_{L(\alpha)}(x) \in (Z(A) \cap B)[x]$. 
In particular, $\Sdet_{A}^{B}{(\alpha)} \in Z(A) \cap B$. 
\end{lem}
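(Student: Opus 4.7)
The plan is to show that every coefficient of $\Phi_{L(\alpha)}(x) \in B[x]$ lies in $Z(A) \cap B$; the ``in particular'' clause then follows by taking $x = 0$, since $\Sdet_{A}^{B}(\alpha)$ is (up to sign) the constant term of $\Phi_{L(\alpha)}(x)$. For each $i \in [m]$, assumptions (i) and (ii) make $\sigma_{i} \colon B \to B$, $\sigma_{i}(b) := e_{i}^{-1} b e_{i}$, a well-defined ring endomorphism of $B$. Since $B$ is commutative, a scalar $c \in B$ belongs to $Z(A) \cap B$ if and only if it commutes with every $e_{i}$, equivalently $\sigma_{i}(c) = c$ for all $i$. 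So it suffices to show that each $\sigma_{i}$ fixes every coefficient of $\Phi_{L(\alpha)}(x)$.

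Fix $i$ and introduce the new tuple $\tilde{e} := (e_{1} e_{i}^{-1}, e_{2} e_{i}^{-1}, \ldots, e_{m} e_{i}^{-1})$, which is another right $B$-basis of $A$. From the defining relation $\alpha e_{j} e_{i}^{-1} = \sum_{p} e_{p} e_{i}^{-1} L_{\tilde{e}}(\alpha)_{pj}$, multiplying on the right by $e_{i}$ and using the identity $e_{i}^{-1} c \, e_{i} = \sigma_{i}(c)$ for $c \in B$ to move scalars past $e_{i}^{-1}$, one obtains the entry-wise identity
\[
L_{e}(\alpha)_{pj} = \sigma_{i}\!\left( L_{\tilde{e}}(\alpha)_{pj} \right).
\]
By the change-of-basis argument in the proof of Lemma~$\ref{lem:3.5}(1)$, there is also an invertible $Q \in \Mat(m, B)$ with $L_{\tilde{e}}(\alpha) = Q^{-1} L_{e}(\alpha) Q$. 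Applying the ring endomorphism $\sigma_{i}$ entry-wise to this identity and combining it with the entry-wise identity above yields
\[
L_{e}(\alpha) = \sigma_{i}(Q)^{-1} \, \sigma_{i}(L_{e}(\alpha)) \, \sigma_{i}(Q),
\]
so $L_{e}(\alpha)$ and $\sigma_{i}(L_{e}(\alpha))$ are similar matrices over $B$.

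Similar matrices over a commutative ring share their characteristic polynomials, so $\Phi_{L(\alpha)}(x) = \Phi_{\sigma_{i}(L(\alpha))}(x) = \sigma_{i}(\Phi_{L(\alpha)}(x))$, where $\sigma_{i}$ is extended to $B[x]$ coefficient-wise with $\sigma_{i}(x) = x$. Hence every coefficient of $\Phi_{L(\alpha)}(x)$ is fixed by $\sigma_{i}$; letting $i$ range over $[m]$ and invoking the reduction above places every coefficient of $\Phi_{L(\alpha)}(x)$ in $Z(A) \cap B$, as required.

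The hardest point I anticipate is checking that $\tilde{e}$ actually is a basis, which amounts to showing $\sigma_{i}$ is surjective on $B$ (injectivity being automatic, since $\sigma_{i}$ is the restriction to $B$ of an inner automorphism of $A$). This can be arranged by writing $e_{i}^{-1} = \sum_{j} e_{j} c_{j}$ in the basis $e$ and using the identity $e_{i} e_{i}^{-1} = 1$ together with the invertibility of $L_{e}(e_{i})$ in $\Mat(m, B)$ to exhibit, for any given $b \in B$, an explicit preimage in $B$ under $\sigma_{i}$.
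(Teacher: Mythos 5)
Your overall strategy is the same as the paper's: show that each coefficient of $\Phi_{L(\alpha)}(x)$ is fixed by every $\sigma_{i}(b)=e_{i}^{-1}be_{i}$ by exhibiting a similarity in $\Mat(m,B)$ between $L_{e}(\alpha)$ and the entrywise image $\sigma_{i}(L_{e}(\alpha))$, and then use that an element of $B$ fixed by all $\sigma_{i}$ lies in $Z(A)\cap B$. That reduction, the entrywise identity $L_{e}(\alpha)=\sigma_{i}(L_{\tilde e}(\alpha))$, and the final steps (similar matrices over a commutative ring have equal characteristic polynomials; $\sigma_{i}$ commutes with $\Det$ and with specialization at $x=0$) are all correct.

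The genuine gap is the unproved claim that $\tilde e=(e_{1}e_{i}^{-1},\ldots,e_{m}e_{i}^{-1})$ is a right $B$-basis of $A$. Since $e_{j}e_{i}^{-1}B=e_{j}\,\sigma_{i}(B)\,e_{i}^{-1}$, one has $\sum_{j}e_{j}e_{i}^{-1}B=\bigl(\sum_{j}e_{j}\sigma_{i}(B)\bigr)e_{i}^{-1}$, which equals $A$ precisely when $\sigma_{i}(B)=B$; so your claim is \emph{equivalent} to surjectivity of $\sigma_{i}$, and condition (ii) only gives the inclusion $\sigma_{i}(B)\subset B$. Your closing sketch does not close this: writing $e_{i}^{-1}=\sum_{j}e_{j}c_{j}$ and expanding $e_{i}be_{i}^{-1}=\sum_{p}e_{p}\bigl(\sum_{j}L_{e}(e_{i})_{pj}\,\sigma_{j}(b)\,c_{j}\bigr)$, the factors $\sigma_{j}(b)$ sit \emph{between} $L_{e}(e_{i})_{pj}$ and $c_{j}$, so the relation $L_{e}(e_{i})c=(\text{coordinates of }1)$ and the invertibility of $L_{e}(e_{i})$ do not produce a preimage of $b$ in $B$. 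The paper avoids this issue entirely by multiplying on the right by $e_{i}$ rather than $e_{i}^{-1}$: it takes $Q_{i}\in\Mat(m,B)$ with $ee_{i}=eQ_{i}$ (which exists since each $e_{j}e_{i}\in A$), observes that $Q_{i}\,\sigma_{i}(Q_{i}')=I_{m}$ where $ee_{i}^{-1}=eQ_{i}'$ --- a one-sided inverse in $\Mat(m,B)$ with $B$ commutative, hence a two-sided one --- and derives $Q_{i}^{-1}L(\alpha)Q_{i}=\sigma_{i}(L(\alpha))$ directly from $\alpha e e_{i}=eL(\alpha)e_{i}$. The same repair works inside your framework: replace $\tilde e=e\,e_{i}^{-1}$ by $\tilde e=e\,e_{i}$, for which the transition matrix is provably invertible and for which $L_{\tilde e}(\alpha)=\sigma_{i}(L_{e}(\alpha))$; the rest of your argument then goes through unchanged.
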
 
\begin{proof}
Without loss of generality, we can assume that $L = L_{e}$. 
We show that $e_{i}^{-1} \Phi_{L(a)}(x) e_{i} = \Phi_{L(a)}(x)$ for all $e_{i}$. 
Since $e_{i}$ is invertible for all $e_{i}$, 
there exists an invertible element $Q_{i} \in \Mat(m, B)$ such that $e e_{i} = e Q_{i}$. 
Then, from $a e = e e_{i} e_{i}^{-1} L(a) e_{i} e_{i}^{-1}$, 
we obtain $a e Q_{i} = e Q_{i} e_{i}^{-1} L(a) e_{i}$. 
Therefore, we have $a e = e Q_{i} e_{i}^{-1} L(a) e_{i} Q_{i}^{-1}$. 
Also, because $L$ is injective and $e_{i}^{-1} L(a) e_{i} \in \Mat(m, B)$, 
we have $Q_{i}^{-1} L(a) Q_{i} = e_{i}^{-1} L(a) e_{i}$. 
Therefore, we have
\begin{align*}
e_{i}^{-1} \Phi_{L(a)}(x) e_{i} 
&= \Det_{\Mat(m, B)}^{B} \left( x I_{m} - e_{i}^{-1} L(a) e_{i} \right) \\ 
&= \Det_{\Mat(m, B)}^{B} \left( x I_{m} - Q_{i}^{-1} L(a) Q_{i} \right) \\ 
&= \Phi_{L(a)}(x). \\ 
\end{align*}
This completes the proof. 
\end{proof}

The following theorem is a Cayley-Hamilton-type theorem. 

\begin{thm}[Cayley-Hamilton-type theorem]\label{thm:7.2}
Let $\Phi_{L(a)}(x) = x^{m} + b_{m-1} x^{m-1} + \cdots + b_{0}$ be the characteristic polynomial of $L(a)$. 
Then we have 
$$
\Phi_{L(a)}(a) = a^{m} + b_{m-1} a^{m-1} + \cdots + b_{0} = 0. 
$$
\end{thm}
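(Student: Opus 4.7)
The plan is to reduce the statement to the classical Cayley--Hamilton theorem for matrices over the commutative ring $B$, and then transfer the resulting identity from $\Mat(m, B)$ back to $A$ via the injective algebra homomorphism $L$. Without loss of generality I may take $L = L_{e}$, exactly as in the proof of Lemma~\ref{lem:7.1}.

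First, I would apply the classical Cayley--Hamilton theorem to the matrix $L(a) \in \Mat(m, B)$. Since $B$ is commutative, this is a standard result and yields
$$
\Phi_{L(a)}(L(a)) = L(a)^{m} + b_{m-1} L(a)^{m-1} + \cdots + b_{0} I_{m} = 0
$$
in $\Mat(m, B)$.

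Next, I would evaluate $L$ on $\Phi_{L(a)}(a) \in A$. Here the crucial input is Lemma~\ref{lem:7.1}, which (under the hypotheses (i) and (ii)) guarantees that each coefficient $b_{i}$ lies in $Z(A) \cap B$. Since $L$ is a $Z(A) \cap B$-algebra homomorphism, this forces $L(b_{i}) = b_{i} L(1) = b_{i} I_{m}$, so the scalar coefficients commute past $L$. Combining with the ring homomorphism property of $L$, I obtain
$$
L\bigl( \Phi_{L(a)}(a) \bigr) = L(a)^{m} + b_{m-1} L(a)^{m-1} + \cdots + b_{0} I_{m} = \Phi_{L(a)}(L(a)) = 0.
$$

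Finally, since $L$ is injective, this forces $\Phi_{L(a)}(a) = 0$ in $A$, which is the desired identity. I do not foresee any serious obstacle: the substantive content has already been absorbed into Lemma~\ref{lem:7.1}, whose conclusion that the coefficients of $\Phi_{L(a)}(x)$ lie in $Z(A) \cap B$ is precisely what is needed to pass $L$ across the polynomial coefficient-by-coefficient, after which injectivity of the regular representation does the rest.
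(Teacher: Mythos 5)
Your proposal is correct and follows essentially the same route as the paper's own proof: classical Cayley--Hamilton over the commutative ring $B$, then Lemma~\ref{lem:7.1} to place the coefficients in $Z(A) \cap B$ so that the $Z(A) \cap B$-algebra homomorphism $L$ can be pulled across the polynomial, and finally injectivity of $L$. No differences worth noting.
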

\begin{proof}
Without loss of generality, we can assume that $L = L_{e}$. 
From the Cayley-Hamilton theorem for commutative rings, 
we have $L(a)^{m} + b_{m-1} L(a)^{m-1} + \cdots + b_{0} I_{m} = 0$. 
Then, because $L$ is a $Z(A) \cap B$-algebra homomorphism, from Lemma~$\ref{lem:7.1}$, 
we obtain 
$$
L(a)^{m} + b_{m-1} L(a)^{m-1} + \cdots + b_{0} I_{m} = L(a^{m} + b_{m-1} a^{m-1} + \cdots + b_{0}) = 0. 
$$
Finally, because $L$ is injective, we have $\Phi_{L(a)}(a) = 0$. 
This completes the proof. 
\end{proof}

From Corollary~$\ref{cor:4.3}$ and Lemma~$\ref{lem:7.1}$, 
we obtain the following corollary. 

\begin{cor}[(S$4$)]\label{cor:7.3}
For all $a \in \Mat(r, A)$, we have $\Sdet_{\Mat(r, A)}^{B}(a) \in Z(A) \cap B$. 
\end{cor}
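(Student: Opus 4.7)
The plan is to combine Lemma~$\ref{lem:7.1}$ (which handles the $r=1$ case) with Corollary~$\ref{cor:4.3}$ (which bootstraps from $A$ to $\Mat(r,A)$ while preserving containment in a subring $S \subset B$). The key observation is that $Z(A) \cap B$ is a subring of $B$, since it is the intersection of two subrings of $A$, namely $B$ itself and the center $Z(A)$.

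First, I would recall the setup. By Definition~$\ref{def:6.1}$, $\Sdet_{\Mat(r, A)}^{B} = \Det_{\Mat(mr, B)}^{B} \circ \iota \circ L'$, where $L'$ is a left regular representation from $\Mat(r, A)$ to $\Mat(m, \Mat(r, B))$; and $\Sdet_{A}^{B}$ (the $r=1$ case) equals $\Det_{\Mat(m,B)}^{B} \circ L$, where $L$ is a left regular representation from $A$ to $\Mat(m, B)$. Lemma~$\ref{lem:7.1}$ (which uses conditions (i) and (ii) on $e$) tells us that $(\Det_{\Mat(m,B)}^{B} \circ L)(a) \in Z(A) \cap B$ for every $a \in A$.

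Setting $S := Z(A) \cap B$, which is a subring of $B$ as noted above, the hypothesis of Corollary~$\ref{cor:4.3}$ is satisfied by Lemma~$\ref{lem:7.1}$. Applying Corollary~$\ref{cor:4.3}$ then yields $(\Det_{\Mat(mr,B)}^{B} \circ L')(a') \in S = Z(A) \cap B$ for all $a' \in \Mat(r, A)$, which is exactly the claim $\Sdet_{\Mat(r,A)}^{B}(a') \in Z(A) \cap B$.

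There is essentially no obstacle beyond verifying that $Z(A) \cap B$ really is a subring of $B$ and that Corollary~$\ref{cor:4.3}$ applies verbatim with this choice of $S$; both are immediate. The real content has already been absorbed into Lemma~$\ref{lem:7.1}$ (where the basis conditions (i) and (ii) are used to conjugate by each $e_i$ and show invariance of the characteristic polynomial) and into Corollary~$\ref{cor:4.3}$ (which rests on Lemmas~$\ref{lem:2.2}$ and $\ref{lem:4.2}$). Thus the proof should be two or three lines.
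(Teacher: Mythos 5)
Your proposal is correct and is exactly the paper's argument: the paper derives Corollary~$\ref{cor:7.3}$ by citing Corollary~$\ref{cor:4.3}$ together with Lemma~$\ref{lem:7.1}$, with $S = Z(A)\cap B$ playing the role of the subring. Your additional remark that $Z(A)\cap B$ is indeed a subring of $B$ is a small but worthwhile verification that the paper leaves implicit.
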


Next, from Corollaries~$\ref{cor:4.3}$ and $\ref{cor:7.3}$, 
we obtain the following corollary. 

\begin{cor}[(S$6$)]\label{cor:7.4} 
For all $a \in \Mat(r, A)$, we have $\Sdet_{\Mat(r, A)}^{C}(a) = \left( \Sdet_{\Mat(r, A)}^{B}(a) \right)^{n}$. 
\end{cor}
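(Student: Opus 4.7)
The plan is to combine property (S5) from Theorem~$\ref{thm:6.4}$ with Corollary~$\ref{cor:7.3}$, and then evaluate the resulting scalar determinant by exploiting the centrality of the inner Study-type determinant.

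First, I would apply (S5) to split the target through $B$:
\[
\Sdet_{\Mat(r, A)}^{C}(a) \;=\; \Sdet_{\Mat(1, B)}^{C}\bigl(\Sdet_{\Mat(r, A)}^{B}(a)\bigr)
\;=\; \Det_{\Mat(n, C)}^{C}\bigl(L_{f}(b)\bigr),
\]
where $b := \Sdet_{\Mat(r, A)}^{B}(a)$ and $L_{f}$ denotes the left regular representation of $B$ over $C$ with respect to a basis $f = (f_{1}\:\cdots\:f_{n})$. This reduces the whole claim to establishing the scalar identity $\Det_{\Mat(n, C)}^{C}(L_{f}(b)) = b^{n}$ for this particular $b$.

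Next, I would invoke Corollary~$\ref{cor:7.3}$, which asserts that $b \in Z(A) \cap B$. Since $b$ is central in $A$, it commutes with every $f_{j}$ viewed as an element of $A$, so the defining relation $bf_{j} = \sum_{i} f_{i}\,(L_{f}(b))_{ij}$ collapses to $bf_{j} = f_{j}b$, which identifies $L_{f}(b)$ with the scalar matrix $b\,I_{n}$. Taking determinants then gives $\Det_{\Mat(n, C)}^{C}(L_{f}(b)) = b^{n}$, and combining with the first display yields $\Sdet_{\Mat(r, A)}^{C}(a) = b^{n} = \bigl(\Sdet_{\Mat(r, A)}^{B}(a)\bigr)^{n}$, as required.

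The role of Corollary~$\ref{cor:4.3}$ in this argument is auxiliary but essential: it is what propagates Corollary~$\ref{cor:7.3}$ uniformly from scalar inputs $a \in A$ — where Lemma~$\ref{lem:7.1}$ supplies the centrality directly through the coefficients of the characteristic polynomial of $L_{e}(a)$ — to matrix inputs $a \in \Mat(r, A)$, so the centrality exploited above is valid for general $a$. The main technical obstacle is the step $L_{f}(b) = b\,I_{n}$: because $L_{f}$ is defined as a ring homomorphism into $\Mat(n, C)$, this identification must be read as an equality of matrices inside $\Mat(n, A)$, and it relies on the fact that $b$ is central in the ambient ring $A$ and not merely in $B$. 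Without that stronger centrality guaranteed by Corollary~$\ref{cor:7.3}$, the element $b$ could act on the $C$-basis $f$ nontrivially, and $\Det_{\Mat(n, C)}^{C}(L_{f}(b))$ would be the norm of $b$ from $B$ to $C$ rather than the plain $n$-th power.
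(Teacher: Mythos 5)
Your reduction via (S5) is fine: with $b := \Sdet_{\Mat(r,A)}^{B}(a)$ one does have $\Sdet_{\Mat(r,A)}^{C}(a) = \Det_{\Mat(n,C)}^{C}(L_{f}(b))$, and Corollary~\ref{cor:7.3} does give $b \in Z(A)\cap B$. The gap is the step ``$b$ commutes with every $f_{j}$, hence $L_{f}(b) = b\,I_{n}$.'' The matrix $L_{f}(b)$ has entries in $C$, determined by $b f_{j} = \sum_{i} f_{i}\,(L_{f}(b))_{ij}$; the identity $bf_{j}=f_{j}b$ holds for \emph{every} $b\in B$ because $B$ is commutative, so if your argument were valid it would prove $\Det_{\Mat(n,C)}^{C}(L_{f}(b)) = b^{n}$ for all $b\in B$. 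That is false: take $B=\mathbb{C}$, $C=\mathbb{R}$, $f=(1\ i)$, $b=i$; then $L_{f}(i)=\left(\begin{smallmatrix}0&-1\\1&0\end{smallmatrix}\right)$, whose determinant is $1\neq i^{2}$. Centrality of $b$ in $A$ is not the relevant input here: it constrains how $b$ interacts with elements of $A$ outside $B$, whereas $L_{f}(b)$ is computed entirely inside the commutative ring $B$ viewed as a $C$-module. So the claim in your closing paragraph---that centrality in $A$, as opposed to in $B$, is what forces $L_{f}(b)$ to be scalar---is a misdiagnosis; in general $\Det_{\Mat(n,C)}^{C}(L_{f}(b))$ is the norm of $b$ from $B$ to $C$ no matter how central $b$ is in $A$.

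What the argument actually needs is $b\in C$: then $f_{j}b$ is the $C$-multiple $f_{j}\cdot b$ of $f_{j}$, so $L_{f}(b)=bI_{n}$ literally and $\Det_{\Mat(n,C)}^{C}(L_{f}(b))=b^{n}$. In the motivating application this is available because $Z(\mathbb{H})\cap\mathbb{C}=\mathbb{R}=C$, but the inclusion $Z(A)\cap B\subseteq C$ is not among the hypotheses (i)--(ii) and does not follow from them: already $A=B=\mathbb{C}$, $C=\mathbb{R}$, $a=i$ satisfies (i)--(ii) while $\Sdet_{\Mat(1,A)}^{C}(i)=1\neq -1=\bigl(\Sdet_{\Mat(1,A)}^{B}(i)\bigr)^{2}$. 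To be fair, the paper gives no argument beyond the citation ``from Corollaries~\ref{cor:4.3} and \ref{cor:7.3},'' so your route through (S5) is the natural reconstruction and the missing step is exactly the one the paper leaves implicit; but as written your proof does not close it. You would need either to add the hypothesis $Z(A)\cap B\subseteq C$ (or directly $\Sdet_{\Mat(r,A)}^{B}(a)\in C$), or to supply an argument that genuinely uses (i)--(ii) to place $b$ in $C$.
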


\section{Image of a regular representation when the direct sum forms a group}
In this section, 
we obtain two expressions for regular representations and we characterize the image of a regular representation in the case that a basis of $A$ as $B$-module satisfies certain conditions. 

Let $e B := \{ e_{i} B \mid i \in [m] \}$ and we define a product of $e_{i} B$ and $e_{j} B$ as 
$$
e_{i} B * e_{j} B := \left\{ e_{i} e_{j} b \mid b \in B \right\}. 
$$
In this section, we assume that $e$ satisfies the following conditions: 
\begin{enumerate}
\item[(iii)] for any $e_{i}$ and $e_{j}$, $e_{i}B * e_{j}B \in eB$; 
\item[(iv)] there exists $e_{k}$ such that $e_{k} B = B$ as set; 
\item[(v)] for any $e_{i}$, there exists $e_{j}$ such that $e_{i}B * e_{j}B = B$. 
\end{enumerate}
It is easy to show that $e_{i}$ is invertible in $A$ and $(eB, *)$ is a group. 
For a group $G$, we denote the unit element of $G$ as $1_{G}$. 
We remark that even if $e'$ has the above properties, 
then $eB$ and $e'B$ are not necessary group isomorphism. 

\begin{rei}
Let $G = \mathbb{Z} / 2 \mathbb{Z} \times \mathbb{Z} / 2 \mathbb{Z}$ and $\alpha := \frac{1-i}{2} \left( \overline{1}, \overline{0} \right) + \frac{1 + i}{2} \left( \overline{0}, \overline{1} \right) \in \mathbb{C} G$. 
Then, $\left\{ \left( \overline{0}, \overline{0} \right), \left( \overline{1}, \overline{0} \right), \left( \overline{0}, \overline{1} \right), \left( \overline{1}, \overline{1} \right) \right\}$ and $\left\{ 1_{G}, \alpha, \alpha^{2}, \alpha^{3} \right\}$ are groups and basis of $\mathbb{C}G$ as a $\mathbb{C}$-right module, respectively. 
However, $\left\{ 1_{G}, \left( \overline{1}, \overline{0} \right), \left( \overline{0}, \overline{1} \right), \left( \overline{1}, \overline{1} \right) \right\} \ncong \left\{ 1_{G}, \alpha, \alpha^{2}, \alpha^{3} \right\}$ as group. 
\end{rei}

Let $P(e)$ be the diagonal matrix $\diag{(e_{1}, e_{2}, \ldots, e_{m})} \in \Mat(m, A)$. 
To obtain an expression for $L_{e}$, 
we define the indicator function ${\bf 1}_{B}$ by 
\begin{align*}
{\bf 1}_{B}(b) := 
\begin{cases}
1, & b \in B, \\ 
0, & b \not\in B. 
\end{cases}
\end{align*}
We now formulate an expression for regular representations in terms of ${\bf 1}_{B}$. 

\begin{lem}\label{lem:7.5}
Let $a = \sum_{k \in [m]} e_{k} b_{k} \in A$, where $b_{k} \in B$. 
Then we have 
$$
L_{e}(a)_{i j} = \sum_{k \in [m]} {\bf 1}_{B}(e_{i}^{-1} e_{k} e_{j}) e_{i}^{-1} e_{k} b_{k} e_{j}. 
$$
\end{lem}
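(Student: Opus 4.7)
The plan is to evaluate the defining relation $a e_j = \sum_{i \in [m]} e_i L_{e}(a)_{ij}$ after expanding $a$ in the basis $e$, and then to identify in which direct summand $e_i B$ each resulting term lies, using the group structure supplied by conditions (iii)--(v).

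First I would substitute $a = \sum_{k \in [m]} e_k b_k$ into $ae_j$ to obtain $ae_j = \sum_{k \in [m]} e_k b_k e_j$, so that the problem reduces to decomposing each $e_k b_k e_j$ according to the direct sum $A = \bigoplus_{i \in [m]} e_i B$ and extracting the coefficient of $e_i$. The formula to be proved can then be read as the claim that, for each $k$, exactly one index $i$ contributes, and that the indicator $\mathbf{1}_B(e_i^{-1} e_k e_j)$ picks out this index correctly.

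The heart of the argument is the identification of the summand containing $e_k b_k e_j$. By condition (iii), $e_k B * e_j B = e_{\ell} B$ for a unique $\ell = \ell(k,j) \in [m]$, so in particular $e_k e_j \in e_{\ell} B$, which is equivalent to $e_\ell^{-1} e_k e_j \in B$ and to $\mathbf{1}_B(e_i^{-1} e_k e_j) = \delta_{i,\ell(k,j)}$ (uniqueness of the direct sum decomposition prevents $e_i^{-1} e_k e_j$ from landing in $B$ for any other $i$). To transfer this from $e_k e_j$ to $e_k b_k e_j$, I would write $e_k b_k e_j = (e_k e_j)(e_j^{-1} b_k e_j)$; provided the conjugation $b_k \mapsto e_j^{-1} b_k e_j$ sends $B$ into $B$, the factor on the right is in $B$, so $e_k b_k e_j \in e_{\ell(k,j)} B$, and its coefficient on $e_{\ell(k,j)}$ is exactly $e_{\ell(k,j)}^{-1} e_k b_k e_j \in B$. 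Summing the contributions of $e_i$ over all $k$, and encoding the vanishing of terms with $i \neq \ell(k,j)$ by the indicator $\mathbf{1}_B(e_i^{-1} e_k e_j)$, gives the stated formula.

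The main obstacle is the stability condition $e_j^{-1} B e_j \subseteq B$ used above. In Section~7 this was assumption (ii), but Section~8 lists only (iii)--(v); I would argue that condition (iv) forces $e_{k_0} \in B$ and $e_{k_0}^{-1} \in B$ for the identity $e_{k_0}$ of the group $(eB,*)$, and then combine this with (iii) and (v) together with the direct-sum uniqueness to deduce $e_j^{-1} B e_j \subseteq B$ for every $e_j$. Once this stability is in hand, the remainder of the proof is a bookkeeping exercise collecting the coefficients of the $e_i$.
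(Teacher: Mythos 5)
Your overall strategy coincides with the paper's: the paper verifies that the row vector $e$ times the candidate matrix equals $a e$, using that for each pair $(k,j)$ exactly one index $i$ satisfies ${\bf 1}_{B}(e_{i}^{-1}e_{k}e_{j})=1$, while you decompose $a e_{j}$ and read off the coefficient of each $e_{i}$ --- the same computation read in the opposite direction. You are in fact more scrupulous than the paper on one point: for the uniqueness of the decomposition $a e_{j}=\sum_{i}e_{i}c_{i}$ with $c_{i}\in B$ to identify the candidate entries with $L_{e}(a)_{ij}$, one must know that $e_{i}^{-1}e_{k}b_{k}e_{j}$ lies in $B$, and you correctly isolate the needed ingredient $e_{j}^{-1}Be_{j}\subseteq B$ via the factorization $e_{k}b_{k}e_{j}=(e_{k}e_{j})(e_{j}^{-1}b_{k}e_{j})$.

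The gap is in your final paragraph: the stability $e_{j}^{-1}Be_{j}\subseteq B$ cannot be deduced from conditions (iii)--(v). Take $A=\mathbb{C}S_{3}$, $B=\mathbb{C}H$ with $H=\{1,(12)\}$, and $e=(1,\,(123),\,(132))$, a transversal of the left cosets of $H$. Then $e_{i}B*e_{j}B$ is always one of the three cosets, $e_{1}B=B$, and $(eB,*)\cong\mathbb{Z}/3\mathbb{Z}$, so (iii)--(v) hold; yet $(123)^{-1}(12)(123)=(13)\notin\mathbb{C}H$, so $e_{2}^{-1}Be_{2}\not\subseteq B$. Moreover the formula of the lemma genuinely fails here: for $a=(13)=e_{2}\cdot(12)$ one has $ae_{2}=(12)=e_{1}\cdot(12)$, so $L_{e}(a)_{12}=(12)$ and $L_{e}(a)_{32}=0$, whereas the right-hand side of the stated formula produces $0$ in position $(1,2)$ and $e_{3}^{-1}e_{2}(12)e_{2}=(13)\notin B$ in position $(3,2)$. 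So no argument can close this step from (iii)--(v) alone: you must simply \emph{assume} condition (ii) of Section~$7$ (the cumulative numbering (i)--(vi), and the remark in Section~$9$ that the bases there satisfy all of (i)--(vi), indicate that (ii) is meant to remain in force in Section~$8$). Once (ii) is taken as a hypothesis, your factorization argument is complete and correct; note also that the paper's own proof silently relies on the same membership of the entries in $B$, so your bookkeeping is the right way to make the argument airtight.
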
 
\begin{proof}
For all $a = \sum_{k \in [m]} e_{k} b_{k} \in A$, we have 
\begin{align*}
&e \left( \sum_{k \in [m]} {\bf 1}_{B}(e_{i}^{-1} e_{k} e_{j}) e_{i}^{-1} e_{k} b_{k} e_{j} \right)_{1 \leq i, j \leq m} \\ 
&\quad = \left( \sum_{i \in [m]} \sum_{k \in [m]} {\bf 1}_{B}(e_{i}^{-1} e_{k} e_{1}) e_{k} b_{k} e_{1} \quad \cdots \quad \sum_{i \in [m]} \sum_{k \in [m]} {\bf 1}_{B}(e_{i}^{-1} e_{k} e_{m}) e_{k} b_{k} e_{m} \right) \\ 
&\quad = \left( \sum_{k \in [m]} e_{k} b_{k} e_{1} \quad \cdots \quad \sum_{k \in [m]} e_{k} b_{k} e_{m} \right) \\ 
&\quad = a e. 
\end{align*}
This completes the proof. 
\end{proof}

Let $L_{eB}$ be the regular representation of the group $eB$. 
From 
$$
L_{eB}(e_{k} B)_{i j} = \left( {\bf 1}_{B}(e_{i}^{-1} e_{k} e_{j}) \right)_{1 \leq i, j \leq m}, 
$$
we can obtain an another expression for regular representations. 

\begin{cor}\label{cor:7.6}
Let $a = \sum_{k \in [m]} e_{k} b_{k} \in A$, where $b_{k} \in B$. 
Then we have 
$$
L_{e}(a) = P(e)^{-1} \left( \sum_{k \in [m]} L_{eB}(e_{k} B) e_{k} b_{k} \right) P(e). 
$$
\end{cor}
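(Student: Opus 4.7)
The plan is to verify the identity entry by entry, reducing everything to Lemma~$\ref{lem:7.5}$. First I would write out the $(i,j)$-entry of the right-hand side. Since $P(e) = \diag(e_{1}, e_{2}, \ldots, e_{m})$ is diagonal (and invertible because each $e_{i}$ is a unit in $A$ under the current hypotheses, as noted right after conditions (iii)--(v)), conjugating a matrix $M$ as $P(e)^{-1} M P(e)$ simply replaces the entry $M_{ij}$ by $e_{i}^{-1} M_{ij} e_{j}$. Applied to the inner sum, this gives
\begin{align*}
\left( P(e)^{-1} \Bigl( \sum_{k \in [m]} L_{eB}(e_{k} B) e_{k} b_{k} \Bigr) P(e) \right)_{ij}
&= \sum_{k \in [m]} e_{i}^{-1} \, \bigl( L_{eB}(e_{k} B) e_{k} b_{k} \bigr)_{ij} \, e_{j}.
\end{align*}

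Second, I would unpack the bracket. The matrix $L_{eB}(e_{k} B)$ has entries in $\{0, 1\}$, so the scalar $e_{k} b_{k}$ (an element of $A$) simply multiplies each entry on the right, and the entries $L_{eB}(e_{k}B)_{ij} = {\bf 1}_{B}(e_{i}^{-1} e_{k} e_{j})$ commute with everything in $A$ because $0$ and $1$ are central. Hence
\begin{align*}
\sum_{k \in [m]} e_{i}^{-1} \, {\bf 1}_{B}(e_{i}^{-1} e_{k} e_{j}) \, e_{k} b_{k} e_{j}
&= \sum_{k \in [m]} {\bf 1}_{B}(e_{i}^{-1} e_{k} e_{j}) \, e_{i}^{-1} e_{k} b_{k} e_{j},
\end{align*}
which is exactly $L_{e}(a)_{ij}$ by Lemma~$\ref{lem:7.5}$. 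Since the $(i,j)$-entries agree for all $i, j \in [m]$, the matrix identity follows.

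There is no real obstacle: the proof is a direct bookkeeping exercise, and the only point requiring a moment's care is the noncommutativity of $A$. Specifically, one must be sure that the scalar ${\bf 1}_{B}(e_{i}^{-1} e_{k} e_{j})$ (being $0$ or $1$) is allowed to move past $e_{i}^{-1}$, and that the diagonal matrix $P(e)^{-1}$ acts on the left by $e_{i}^{-1}$ on row $i$ while $P(e)$ acts on the right by $e_{j}$ on column $j$, without mixing with the ambient matrix entries in any other way. Once this ordering is tracked correctly, the identification with Lemma~$\ref{lem:7.5}$ is immediate, and no further computation is needed.
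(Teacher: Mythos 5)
Your proof is correct and matches the paper's (essentially unstated) argument: the paper simply records the identity $L_{eB}(e_{k}B)_{ij} = {\bf 1}_{B}(e_{i}^{-1}e_{k}e_{j})$ and lets the corollary follow from Lemma~\ref{lem:7.5}, which is exactly the entrywise conjugation computation you carry out. Your added care about the centrality of the $\{0,1\}$-valued entries and the left/right action of $P(e)^{-1}$ and $P(e)$ is the right bookkeeping and introduces no gap.
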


We add the following assumption: 
\begin{enumerate}
\item[(vi)] for any $e_{i}B$ and $e_{j}B$, $e_{i}B * e_{j} B = e_{j} B * e_{i} B$. 
\end{enumerate}
Let $J(e_{k}) := P(e)^{-1} \left( L_{eB}(e_{k} B) \right) P(e)$ for all $k \in [m]$, 
and we write $\{ \rho(s) \mid s \in S \}$ as $\rho(S)$ for any map $\rho$ and any set $S$. 
We show that $b \in \Mat(m, B)$ is an image of $L_{e}$ if and only if $b$ commutes with $J(e_{k})$ for all $k \in [m]$. 

\begin{thm}[Theorem~$\ref{thm:1.9}$]\label{thm:7.7}
We have 
$$
L_{e}(A) = \{ b \in \Mat(m, B) \mid J(e_{k}) b = b J(e_{k}), k \in [m] \}. 
$$
\end{thm}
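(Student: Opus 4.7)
The approach rests on observing that under (iii)--(vi), $(eB, *)$ is an abelian group, so the matrices $L_{eB}(e_k B)$ pairwise commute and carry entries in $\{0, 1\}$.

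For the forward inclusion $L_e(A) \subseteq \{b \in \Mat(m, B) : J(e_k) b = b J(e_k),\ k \in [m]\}$, I would combine the identity $L_e(a) = P(e)^{-1} M(a) P(e)$ of Corollary~\ref{cor:7.6}, where $M(a) := \sum_\ell L_{eB}(e_\ell B) \cdot (e_\ell b_\ell)$ for $a = \sum_\ell e_\ell b_\ell$, with the defining relation $J(e_k) = P(e)^{-1} L_{eB}(e_k B) P(e)$. Conjugating out $P(e)$ reduces the desired commutation to $L_{eB}(e_k B)\, M(a) = M(a)\, L_{eB}(e_k B)$, which holds summand by summand: the $\{0, 1\}$-entries of $L_{eB}(e_k B)$ commute with each $A$-scalar $e_\ell b_\ell$, while abelian-ness of $eB$ gives $L_{eB}(e_k B) L_{eB}(e_\ell B) = L_{eB}(e_\ell B) L_{eB}(e_k B)$.

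For the reverse inclusion, I would first reduce to the case $e_1 = 1_A$: by (iv) some $e_{k_0}$ satisfies $e_{k_0} B = B$, which forces $e_{k_0}^{-1} \in B$, so the basis change $e_{k_0} \leftarrow e_{k_0} e_{k_0}^{-1} = 1$ (diagonal over $B$) followed by a reordering lets us assume $e_1 = 1_A$; this conjugates both $L_e(A)$ and each $J(e_k)$ by the same element of $\Mat(m, B)$, so the statement transfers. Now $J(e_1) = I_m$, and for $b$ in the commutant set $a := \sum_i e_i b_{i, 1} \in A$. Lemma~\ref{lem:7.5} at $j = 1$ yields $L_e(a)_{i, 1} = b_{i, 1}$ (only the $k = i$ summand survives, since $e_i^{-1} e_k \in B$ forces $i = k$ by the directness of $\bigoplus_i e_i B$), so by the forward inclusion $c := L_e(a) - b$ lies in the commutant and has zero first column. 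Inspecting the $(i, 1)$-entry of $c\, J(e_k) = J(e_k)\, c$: the relation $[L_{eB}(e_k B)]_{p, 1} = \mathbf{1}_B(e_p^{-1} e_k) = \delta_{p, k}$ gives $J(e_k)_{p, 1} = \delta_{p, k} e_k^{-1}$, whence $(c J(e_k))_{i, 1} = c_{i, k} e_k^{-1}$, while $(J(e_k) c)_{i, 1}$ is a linear combination of the vanishing entries of the first column of $c$ and so is $0$. Invertibility of $e_k$ in $A$ then forces $c_{i, k} = 0$ for all $i, k$, so $c = 0$ and $b = L_e(a) \in L_e(A)$.

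The main technical obstacle is the careful bookkeeping of which entries of $L_{eB}(e_k B)$ and $J(e_k)$ vanish and of how the $\{0, 1\}$-entries commute past (generally non-commuting) $A$-scalars when verifying the forward inclusion; both points reduce to the directness of the decomposition $A = \bigoplus_i e_i B$ together with the invertibility of each $e_i$, after which the argument boils down to two straightforward computations.
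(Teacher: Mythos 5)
Your proof is correct and takes essentially the same route as the paper's: the forward inclusion is the commutation built into Corollary~$\ref{cor:7.6}$ (which you merely spell out via condition (vi)), and the reverse inclusion subtracts off $L_{e}(a)$ matching the first column, then uses the single invertible entry in the first column of each $J(e_{k})$ to kill the remaining entries. Your preliminary normalization $e_{1}=1_{A}$ is a harmless extra step that the paper leaves implicit.
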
 
\begin{proof}
From Corollary~$\ref{cor:7.6}$, we have $L_{e}(A) \subset \{ b \in \Mat(m, B) \mid J(e_{k}) b = b J(e_{k}), k \in [m] \}$. 
We show that $\{ b \in \Mat(m, B) \mid J(e_{k}) b = b J(e_{k}), k \in [m] \} \subset L_{e}(A)$. 
For all $b \in \Mat(m, B)$, there exists $a \in A$ and $b'_{ij} \in B$ such that $b = L_{e}(a) + b'$ where $b' = (b'_{ij})_{1 \leq i, j \leq m}$ and $b'_{k1} = 0$ for all $k \in [m]$. 
Also from Corollary~$\ref{cor:7.6}$, we have $b' J(e_{k}) = J(e_{k}) b'$ for all $k \in [m]$. 
Further, we know that for all $j \in [m] \setminus \{1\}$, there exists $e_{k}$ such that $(J(e_{k}))_{1 1} = e_{j} e_{1}^{-1}$ and $(J(e_{k}))_{i 1} = 0$ for all $i \neq j$. 
Therefore, we have 
\begin{align*}
b'_{i j} e_{j} e_{1}^{-1} 
&= b'_{i j} (J(e_{k}))_{j 1} \\ 
&= (b' J(e_{k}))_{i 1} \\ 
&= (J(e_{k}) b')_{i 1} \\ 
&= 0. 
\end{align*}
This implies $b = L_{e}(a) \in L_{e}(A)$. 
This completes the proof. 
\end{proof}

From Lemma~$\ref{lem:4.1}$ and Theorem~$\ref{thm:7.7}$, we obtain the following corollary. 

\begin{cor}\label{cor:7.8}
We have 
$$
L_{e \otimes I_{r}}(\Mat(r, A)) = \{ b \in \Mat(mr, B) \mid (J(e_{k}) \otimes I_{r}) b = b J(e_{k}) \otimes I_{r}), k \in [m] \}. 
$$
\end{cor}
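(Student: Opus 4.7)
The plan is to deduce Corollary~\ref{cor:7.8} from Lemma~\ref{lem:4.1} and Theorem~\ref{thm:7.7} by tracking block structures compatibly on both sides of the identification $\Mat(mr, B) \cong \Mat(m, \Mat(r, B))$. By Lemma~\ref{lem:4.1}, every element of $L_{e \otimes I_{r}}(\Mat(r, A))$ has the form $\sum_{i, j \in [r]} L_{e}(a_{ij}) \otimes E_{ij}$ for some $(a_{ij}) \in \Mat(r, A)$. For the forward inclusion I would fix $k \in [m]$ and use
$$
(J(e_{k}) \otimes I_{r})(L_{e}(a_{ij}) \otimes E_{ij}) = (J(e_{k}) L_{e}(a_{ij})) \otimes E_{ij} = (L_{e}(a_{ij}) J(e_{k})) \otimes E_{ij},
$$
where the second equality is Theorem~\ref{thm:7.7} applied to the individual block $L_{e}(a_{ij}) \in L_{e}(A)$. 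This in turn equals $(L_{e}(a_{ij}) \otimes E_{ij})(J(e_{k}) \otimes I_{r})$, and summing over $i, j$ gives the required commutation for the whole Kronecker sum.

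For the reverse inclusion, I would take $b \in \Mat(mr, B)$ that commutes with every $J(e_{k}) \otimes I_{r}$ and index its entries by pairs $((p, u), (q, v))$ with $p, q \in [m]$ and $u, v \in [r]$, corresponding to the Kronecker convention above. For each fixed $(u, v) \in [r]^{2}$ I would extract the slice $b^{u, v} \in \Mat(m, B)$ defined by $b^{u, v}_{p, q} := b_{(p, u), (q, v)}$. Expanding the hypothesis entrywise and using that the second tensor factor $I_{r}$ forces $u$ and $v$ to match on both sides then yields $J(e_{k}) b^{u, v} = b^{u, v} J(e_{k})$ for every $k$. Theorem~\ref{thm:7.7} now supplies $a_{uv} \in A$ with $b^{u, v} = L_{e}(a_{uv})$; setting $a := (a_{uv})_{1 \leq u, v \leq r} \in \Mat(r, A)$, Lemma~\ref{lem:4.1} shows entrywise that $L_{e \otimes I_{r}}(a) = b$, because both sides produce $L_{e}(a_{uv})_{p, q}$ at the position $((p, u), (q, v))$.

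The main source of friction is the bookkeeping between the two Kronecker indexings (block index versus within-block index, equivalently the two ways of realising $\Mat(mr, B) \cong \Mat(m, \Mat(r, B))$); one must verify that the slice decomposition $b \mapsto (b^{u, v})$ is exactly inverse to the assembly $(a_{uv}) \mapsto \sum L_{e}(a_{uv}) \otimes E_{uv}$ used by Lemma~\ref{lem:4.1}, so that the two directions of the corollary refer to the same identification. Once this is settled, the only algebraic content required beyond Theorem~\ref{thm:7.7} is the mixed-product identity $(X \otimes I_{r})(Y \otimes E_{ij}) = (XY) \otimes E_{ij} = (Y \otimes E_{ij})(X \otimes I_{r})$ whenever $X$ and $Y$ commute, which handles the Kronecker-level commutation uniformly.
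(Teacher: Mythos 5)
Your proposal is correct and is exactly the derivation the paper intends: the paper gives no written proof, stating only that the corollary follows from Lemma~$\ref{lem:4.1}$ and Theorem~$\ref{thm:7.7}$, and your argument (blockwise commutation via the mixed-product identity for the forward inclusion, slicing $b$ into $m\times m$ pieces $b^{u,v}$ and applying Theorem~$\ref{thm:7.7}$ to each for the reverse) is precisely how those two results combine. The index bookkeeping you flag is handled correctly, so nothing is missing.
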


\section{On the relationship between the Study-type and Study determinants}
In this section, 
we introduce the Study determinant and elucidate its properties. 
We derive these properties from the properties of the Study-type determinant. 

First, we recall that any complex $r \times r$ matrix can be written uniquely as $b = c_{1} + i c_{2}$, where $c_{1}$ and $c_{2} \in \Mat(r, \mathbb{R})$, 
and any quaternionic $r \times r$ matrix can be written uniquely as $a = b_{1} + j b_{2}$, where $b_{1}$ and $b_{2} \in \Mat(r, \mathbb{C})$. 
We define $\phi_{r} : \Mat(r, \mathbb{C}) \rightarrow \Mat(2r, \mathbb{R})$ and $\psi_{r} : \Mat(r, \mathbb{H}) \rightarrow \Mat(2r, \mathbb{C})$ by 
\begin{align*}
\phi_{r}(c_{1} + i c_{2}) := 
\begin{pmatrix}
c_{1} & - c_{2} \\ 
c_{2} & c_{1} 
\end{pmatrix}, \quad 
\psi_{r}(b_{1} + j b_{2}) := 
\begin{pmatrix}
b_{1} & -\overline{b_{2}} \\ 
b_{2} & \overline{b_{1}} 
\end{pmatrix}, 
\end{align*}
respectively. 
The Study determinant $\Sdet{}$ is defined by 
$$
\Sdet{(a)} := \left( \Det_{\Mat(2r, \mathbb{C})}^{\mathbb{C}} \circ \psi_{r} \right)(a) 
$$ 
for all $a \in \Mat(r, \mathbb{H})$. 
Let 
$$
J_{r} := 
\begin{pmatrix}
0 & -I_{r} \\ 
I_{r} & 0 
\end{pmatrix}. 
$$
Then the following are known (see, e.g., \cite{Aslaksen1996}): 
\begin{enumerate}
\item[(S$0'$)] the maps $\phi_{r}$ and $\psi_{r}$ are injective algebra homomorphisms; 
\item[(S$1'$)] $\Sdet{(a a')} = \Sdet{(a)} \: \Sdet{(a')}$; 
\item[(S$2'$)] $a$ is invertible in $\Mat(r, \mathbb{H})$ if and only if $\Sdet(a) \in \mathbb{C}$ is invertible; 
\item[(S$3'$)] if $a'$ is obtained from $a$ by adding a left-multiple of a row to another row or a right-multiple of a column to another column, 
then we have $\Sdet(a') = \Sdet(a)$; 
\item[(S$4'$)] $\Sdet(a) \in Z(\mathbb{H}) = \mathbb{R}$; 
\item[(S$5'$)] $\left( \Det_{\Mat(4r, \mathbb{R})}^{\mathbb{R}} \circ \phi_{2r} \circ \psi_{r} \right)(a) = \Sdet{(a)}^{2}$; 
\item[(S$6'$)] $\phi_{r}(\Mat(r, \mathbb{C})) = \{ c \in \Mat(2r, \mathbb{R}) \mid J_{r} c = c J_{r} \}$; 
\item[(S$7'$)] $\psi_{r}(\Mat(r, \mathbb{H})) = \{ b \in \Mat(2r, \mathbb{C}) \mid J_{r} b = \overline{b} J_{r} \}$. 
\end{enumerate}

These properties can be derived from results given in Sections~$2$--$8$. 
Let $A = \mathbb{H}$, 
let $B = \mathbb{C}$, 
let $C = \mathbb{R}$, 
let $e = (1 \: j)$, 
and let $f = (1 \: i)$. 
Then, the basis $e$ of $A$ as $B$-module and the basis $f$ of $B$ as $C$-module satisfy the conditions~(i)--(vi). 
From Examples~$\ref{rei:3.1}$ and $\ref{rei:3.2}$, 
we have $\phi_{r} = \iota \circ L_{f \otimes I_{r}}$ and $\psi_{r} = \iota' \circ L_{e \otimes I_{r}}$, 
where $\iota$ and $\iota'$ are inclusion maps. 
Therefore, (S$0'$) holds. 
Also, from Lemma~$\ref{lem:6.2}$, (S$1'$) and (S$2'$) hold. 
By Lemma~$\ref{lem:6.3}$, we have (S$3'$). 
From Corollary~$\ref{cor:7.3}$, (S$4'$) holds. 
By Corollary~$\ref{cor:7.4}$, (S$5'$) holds. 
Finally, (S$6'$) and (S$7'$) can be derived from Corollary~$\ref{cor:7.8}$ and the fact that $(j J_{r}) b = b (j J_{r})$ if and only if $J_{r} b = \overline{b} J_{r}$.

\section{On the relationship to the group determinant}
In this section, 
we recall the definition of group determinants, 
and we give an extension of Dedekind's theorem and derive an inequality for the degrees of irreducible representations of finite groups.

First, we recall the definition of the group determinant. 
Let $G$ be a finite group, 
let $[x_{g}] = \{ x_{g} \mid g \in G \}$ be the set of independent commuting variables, 
let $\mathbb{C}[x_{g}]$ be the polynomial ring in the variables $[x_{g}]$ with coefficients in $\mathbb{C}$, 
and let $|G|$ be the order of $G$. 
The group determinant $\Theta(G)$ of $G$ is given by $\Theta(G) := \Det_{\Mat(|G|, \mathbb{C}[x_{g}])}^{\mathbb{C}[x_{g}]}{\left( x_{g h^{-1}} \right)_{g, h \in G}} \in \mathbb{C}[x_{g}]$, 
where we apply a numbering to the elements of $G$ (for details, see, e.g., \cite{conrad1998origin}, \cite{Frobenius1968gruppen}, \cite{Frobenius1968gruppencharaktere}, \cite{Frobenius1968gruppen2}, \cite{Hawkins1971}, \cite{Johnson1991}, \cite{van2013history}, and \cite{Yamaguchi2017}). 
It is thus seen that the group determinant $\Theta(G)$ is a homogeneous polynomial of degree $\left| G \right|$. 
In general, the matrix $\left( x_{g h^{-1}} \right)_{g, h \in G}$ is covariant under a change in the numbering of the elements of $G$. 
However, the group determinant, $\Theta(G)$, is invariant. 

Let $\mathbb{C}G := \left\{ \sum_{g \in G} c_{g} g \mid c_{g} \in \mathbb{C} \right\}$ be the group algebra of $G$ over $\mathbb{C}$, 
let $H$ be an abelian subgroup of $G$, 
let $[G:H]$ be the index of $H$ in $G$, 
let $A = \mathbb{C}[x_{g}] \otimes \mathbb{C}G = \left\{ \sum_{g \in G} c_{g}g \mid c_{g} \in \mathbb{C}[x_{g}] \right\}$, 
let $B = \mathbb{C}[x_{g}] \otimes \mathbb{C}H$, 
and let $C = \mathbb{C}[x_{g}] \otimes \mathbb{C}\{ 1_{G} \}$. 
For a group $G$, we denote the regular representation of the group $G$ as $L_{G}$. 
We regard $L_{G}$ as $\mathbb{C}[x_{g}]$-algebra homomorphism from $A$ to $C$. 
Then, from Lemma~$\ref{lem:7.5}$, $L_{G}$ is equivalent to the left regular representation from $A$ to $\Mat(|G|, C)$. 
Therefore, the following commutative diagram holds: 
\[
\xymatrix{
A \ar[r]^-{L} \ar[d]^-{L_{G}} & \Mat([G:H], B) \ar[d]^-{L'} \ar[r]^{\Det_{\Mat([G:H], B)}^{B}} & B \ar[r]^-{L_{H}} & \Mat(|H|, C) \ar[d]_-{\Det_{\Mat(|H|, C)}^{C}} \\ 
\Mat(|G|, C) \ar@{^{(}-_>}[r]^-{} & \Mat(|H|, \Mat([G:H], C)) \ar@{^{(}-_>}[r]^-{} & \Mat(|G|, C) \ar[r]_-{\Det_{\Mat(|G|, C)}^{C}} & C \\ 
}
\]
It is easy to show that $\Theta(G) = \Det_{\Mat(|G|, \mathbb{C}[x_{g}])}^{\mathbb{C}[x_{g}]} \left( \sum_{g \in G} x_{g} L_{G}(g) \right)$ (see, e.g., \cite{conrad1998origin}). 
Therefore, we have 
\begin{align*}
\Theta(G) 1_{G} 
&= \left( \Det_{\Mat(|G|, C)}^{C} \circ L_{G} \right) (\mathfrak{X}_{G}) \\ 
&= \Sdet_{\Mat(1, A)}^{C}(\mathfrak{X}) \\ 
&= \left( \Sdet_{\Mat(1, B)}^{C} \circ \Sdet_{\Mat(1, A)}^{B} \right) (\mathfrak{X}_{G}) \\ 
&= \left( \Det_{\Mat(|H|, C)}^{C} \circ L_{H} \circ \Det_{\Mat([G:H], B)}^{B} \circ L \right) (\mathfrak{X}_{G}). 
\end{align*}

We extend $\varphi \in \widehat{G}$ to $\varphi \colon \mathbb{C}[x_{g}]G \to \mathbb{C}[x_{g}]G$ satisfy $\varphi\left(\sum_{g \in G} c_{g} g \right) = \sum_{g \in G} c_{g} \varphi(g)$, 
where $c_{g} \in \mathbb{C}[x_{g}]$. 
Frobenius proved the following theorem concerning the factorization of the group determinant (see, e.g., \cite{conrad1998origin}). 

\begin{thm}[Frobenius' theorem, Theorem~\ref{thm:1.6}]\label{thm:9.1}
Let $G$ be a finite group. 
Then we have the irreducible factorization 
$$
\Theta(G) = \prod_{\varphi \in \widehat{G}} \Det_{\Mat(\deg{\varphi}, \mathbb{C}[x_{g}])}^{\mathbb{C}[x_{g}]} \left( \varphi\left( \mathfrak{X}_{G} \right) \right)^{\deg{\varphi}}. 
$$
\end{thm}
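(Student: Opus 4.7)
The plan is to combine the commutative diagram established just before Theorem~\ref{thm:9.1} with the Wedderburn--Artin decomposition of $\mathbb{C}G$ to get the factorization identity essentially for free, and then to argue irreducibility and pairwise coprimality of the factors separately.

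By the commutative diagram above, $\Theta(G) 1_{G} = \left( \Det_{\Mat(|G|, C)}^{C} \circ L_{G} \right)(\mathfrak{X}_{G})$, so it suffices to factor $\left( \Det \circ L_{G} \right)(\mathfrak{X}_{G})$ in $\mathbb{C}[x_{g}]$. By Wedderburn--Artin we have $\mathbb{C}G \cong \bigoplus_{\varphi \in \widehat{G}} \Mat(\deg \varphi, \mathbb{C})$, and under this isomorphism $L_{G}$ decomposes as $L_{G} \sim \bigoplus_{\varphi \in \widehat{G}} (\deg \varphi) \cdot \varphi$, each irreducible appearing with multiplicity equal to its degree (each simple Wedderburn block contributes its defining module $\deg \varphi$ times). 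Since the ordinary determinant is conjugation-invariant (equivalently, invariant under the $S_{|G|}$-action of Section~\ref{Preparation}), an application of Corollary~\ref{cor:5.1} with $\rho$ the identity homomorphism $A \to A$ yields
$$
\Theta(G) = \prod_{\varphi \in \widehat{G}} \Det_{\Mat(\deg \varphi, \mathbb{C}[x_{g}])}^{\mathbb{C}[x_{g}]} \left( \varphi(\mathfrak{X}_{G}) \right)^{\deg \varphi}
$$
as polynomials in $\mathbb{C}[x_{g}]$.

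What remains is to verify that each factor $P_{\varphi} := \Det(\varphi(\mathfrak{X}_{G}))$ is irreducible in $\mathbb{C}[x_{g}]$ and that $P_{\varphi}$ and $P_{\varphi'}$ are coprime for inequivalent $\varphi, \varphi' \in \widehat{G}$. For coprimality, I would specialize $\mathfrak{X}_{G}$ (i.e., the coefficients $x_{g}$) to a central idempotent $z_{\varphi} \in \mathbb{C}G$ of the $\varphi$-Wedderburn block: then $\varphi(z_{\varphi}) = I_{\deg \varphi}$ while $\varphi'(z_{\varphi}) = 0$, so $P_{\varphi}(z_{\varphi}) \neq 0$ and $P_{\varphi'}(z_{\varphi}) = 0$, precluding any common irreducible factor. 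For irreducibility, I would use Burnside's theorem (or the Jacobson density theorem) to see that $\varphi \colon \mathbb{C}G \to \Mat(\deg \varphi, \mathbb{C})$ is a surjective $\mathbb{C}$-linear map, so $P_{\varphi}$ is the pullback of the classical determinant polynomial on $\Mat(\deg \varphi, \mathbb{C})$ along a surjective linear map between affine spaces; since that polynomial is irreducible, so is its pullback.

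The main obstacle is the irreducibility of each $P_{\varphi}$; the factorization identity itself follows almost mechanically from the commutative diagram plus Wedderburn--Artin, while coprimality is an easy idempotent specialization.
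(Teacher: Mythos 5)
Your proposal is correct, and for the factorization identity itself it follows the same route as the paper: the paper simply observes that Theorem~9.1 ``holds from'' Theorem~9.2 (the decomposition $L_{G} \sim d_{1}\varphi^{(1)} \oplus \cdots \oplus d_{s}\varphi^{(s)}$ of the regular representation), together with $\Theta(G) = \Det\bigl(\sum_{g} x_{g} L_{G}(g)\bigr)$ and the conjugation-invariance/multiplicativity of the determinant on block-diagonal matrices --- exactly your Wedderburn--Artin step. Where you go beyond the paper is in actually justifying the word \emph{irreducible}: the paper treats Frobenius' theorem as a known classical result and delegates irreducibility to the cited references, whereas you supply the standard arguments --- Burnside's theorem makes $\varphi \colon \mathbb{C}G \to \Mat(\deg\varphi, \mathbb{C})$ surjective, so $\Det(\varphi(\mathfrak{X}_{G}))$ is the pullback of the generic determinant (an irreducible polynomial) along a surjective linear map and hence irreducible, and specialization at the central idempotent $z_{\varphi}$ separates inequivalent factors. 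Both of these are sound; note only that your coprimality argument as phrased needs the irreducibility already in hand (a common irreducible factor of two irreducible polynomials forces them to be associates, and associates cannot disagree on vanishing at $z_{\varphi}$), so the logical order should be: irreducibility first, then the idempotent specialization. Your invocation of Corollary~5.1 with $\rho = \mathrm{id}$ is harmless but heavier than needed; the first equality there is just ``the determinant of a direct sum is the product of the determinants,'' which is all the identity requires.
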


Let $\widehat{G} = \{ \varphi^{(1)}, \varphi^{(2)}, \ldots, \varphi^{(s)} \}$ be a complete set of inequivalent irreducible representations of $G$ over $\mathbb{C}$. 
Theorem~$\ref{thm:9.1}$ holds from the following theorem (which is treated in detail in \cite{benjamin2013}). 

\begin{thm}[]\label{thm:9.2}
Let $G$ be a finite group, 
let $d_{i} = \deg{\varphi^{(i)}}$, 
and let $L_{G}$ be the left regular representation of $G$. 
Then we have 
$$
L_{G} \sim d_{1} \varphi^{(1)} \oplus d_{2} \varphi^{(2)} \oplus \cdots \oplus d_{s} \varphi^{(s)}. 
$$
\end{thm}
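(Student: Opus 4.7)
The plan is to invoke standard character theory, since this result is the classical decomposition theorem for the left regular representation of a finite group and the machinery of Sections~2--8 is not really needed here. First I would compute the character $\chi_{L_{G}}$ of $L_{G}$ directly from the definition: with respect to the basis $G$ of $\mathbb{C}G$, the matrix $L_{G}(g)$ is the permutation matrix corresponding to left multiplication by $g$, which has a nonzero diagonal entry only when $g = 1_{G}$. Hence $\chi_{L_{G}}(1_{G}) = |G|$ and $\chi_{L_{G}}(g) = 0$ for $g \neq 1_{G}$.

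Next, I would invoke Maschke's theorem, which guarantees that $\mathbb{C}G$ is semisimple and therefore $L_{G}$ admits a decomposition $L_{G} \sim \bigoplus_{i=1}^{s} m_{i} \varphi^{(i)}$ for some non-negative integers $m_{i}$. The multiplicities are then recovered via the first orthogonality relation for irreducible characters: if $\chi^{(i)}$ denotes the character of $\varphi^{(i)}$, then
\[
m_{i} \;=\; \langle \chi_{L_{G}}, \chi^{(i)} \rangle \;=\; \frac{1}{|G|} \sum_{g \in G} \chi_{L_{G}}(g) \overline{\chi^{(i)}(g)} \;=\; \chi^{(i)}(1_{G}) \;=\; \deg{\varphi^{(i)}} \;=\; d_{i},
\]
using that only the $g = 1_{G}$ term of the sum survives.

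I do not anticipate any real obstacle, since both ingredients (the character of $L_{G}$ and the orthogonality of irreducible characters over $\mathbb{C}$) are entirely standard and are proved in any introductory text on the representation theory of finite groups, which the author already cites (e.g.\ \cite{benjamin2013}). The only small point worth stating explicitly is that the decomposition holds at the level of equivalence of representations, which is exactly the content of the symbol $\sim$ used in the statement, so no further identification is needed to conclude.
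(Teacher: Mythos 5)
Your proof is correct: computing the character of $L_{G}$ (namely $\chi_{L_{G}}(1_{G}) = |G|$ and $\chi_{L_{G}}(g) = 0$ otherwise), invoking Maschke's theorem for complete reducibility, and extracting the multiplicities via the first orthogonality relation is the standard argument and it goes through exactly as you describe. The paper itself gives no proof of this theorem, deferring entirely to the cited reference \cite{benjamin2013}, and your argument is precisely the one found there, so you have simply filled in the details the author chose to omit.
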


Therefore, the following theorem is deduced from Corollary~$\ref{cor:5.1}$. 

\begin{thm}[Extension of Dedekinds' theorem, Theorem~$\ref{thm:1.4}$]\label{thm:9.3}
Let $G$ be a finite group and let $H$ be an abelian subgroup of $G$. 
Then, writing $\Sdet_{\Mat(1, A)}^{B}(\mathfrak{X})$ as $\Theta(G:H)$, we have 
\begin{align*}
\Theta(G) 1_{G} 
&= \prod_{\varphi \in \widehat{G}} \Det_{\Mat(\deg{\varphi}, B)}^{B} \left( \varphi(\mathfrak{X}) \right)^{\deg{\varphi}} \\ 
&= \prod_{\chi \in \widehat{H}} \chi \left( \Theta(G:H) \right). 
\end{align*}
\end{thm}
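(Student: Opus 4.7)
The plan is to derive both equalities from Corollary~\ref{cor:5.1} applied to the ring homomorphism $L \colon A \to \Mat([G:H], B)$ appearing in the commutative diagram immediately before Theorem~\ref{thm:9.1}, with the identifications $A = \mathbb{C}[x_{g}]G$, $B = \mathbb{C}[x_{g}]H$, $C = \mathbb{C}[x_{g}] 1_{G}$, and $\bar{C} = \mathbb{C}[x_{g}]$. The choice $\rho := L$ is what allows the Study-type determinant $\Theta(G:H) = \Sdet_{\Mat(1,A)}^{B}(\mathfrak{X})$ to appear on the right-hand side.

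First I would identify $L_{f \otimes I_{[G:H]}} \circ L$ with $L_{G}$, the $\mathbb{C}[x_{g}]$-linear extension of the left regular representation of the group $G$, using Lemma~\ref{lem:3.6} (this is exactly the commutative diagram written out just before Theorem~\ref{thm:9.1}). Theorem~\ref{thm:9.2} then supplies the decomposition $L_{G} \sim \bigoplus_{\varphi \in \widehat{G}} (\deg\varphi)\, \varphi$. Because $H$ is abelian, every $\chi \in \widehat{H}$ is one-dimensional, so the same theorem yields $L_{f} \sim L_{H} \sim \bigoplus_{\chi \in \widehat{H}} \chi$.

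Feeding these decompositions into Corollary~\ref{cor:5.1}, and using the trivial observation that $\Det \circ \chi = \chi$ for one-dimensional $\chi$, I obtain the three-fold equality
\[
\prod_{\varphi \in \widehat{G}} \Det_{\Mat(\deg{\varphi},\, B)}^{B}\!\bigl(\varphi(\mathfrak{X})\bigr)^{\deg\varphi}
\;=\; \bigl(\Det_{\Mat(|G|,\, C)}^{C} \circ L_{G}\bigr)(\mathfrak{X})
\;=\; \prod_{\chi \in \widehat{H}} \chi\!\Bigl(\bigl(\Det_{\Mat([G:H],\, B)}^{B} \circ L\bigr)(\mathfrak{X})\Bigr).
\]
The leftmost product equals $\Theta(G)$ by Frobenius' theorem (Theorem~\ref{thm:9.1}); the middle expression also equals $\Theta(G)$ via the computation $\Theta(G) = \Det(\sum_{g} x_{g} L_{G}(g))$ displayed just before Theorem~\ref{thm:9.1}; and the rightmost product is $\prod_{\chi \in \widehat{H}} \chi(\Theta(G:H))$ by the definition of $\Theta(G:H)$. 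Multiplying the resulting polynomial identities in $\mathbb{C}[x_{g}]$ by $1_{G}$ promotes them to the claimed identities in $\mathbb{C}[x_{g}]G$.

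The main obstacle will be managing the identifications carefully: one must verify that the composite regular representations truly realize $L_{G}$ and $L_{H}$ (as opposed to being equivalent only up to conjugation that could pollute Corollary~\ref{cor:5.1}), and that the decompositions of Theorem~\ref{thm:9.2}, originally stated over $\mathbb{C}$, extend to valid decompositions over $\bar{C} = \mathbb{C}[x_{g}]$ so that Corollary~\ref{cor:5.1} applies verbatim. Everything else is bookkeeping around the definitions of $\Theta(G)$, $\Theta(G:H)$, and the Study-type determinant.
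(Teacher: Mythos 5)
Your proposal is correct and follows essentially the same route as the paper: the paper likewise deduces the theorem from Corollary~\ref{cor:5.1} applied with $\rho = L$, using the commutative diagram and the displayed computation of $\Theta(G)1_{G}$ preceding Theorem~\ref{thm:9.1}, together with Theorem~\ref{thm:9.2} for the decompositions of $L_{G}$ and $L_{H}$ and Frobenius' theorem to identify the left-hand product with $\Theta(G)$. Your write-up is in fact more explicit than the paper's one-line deduction, and the caveats you flag (the identifications up to conjugation and the base extension to $\mathbb{C}[x_{g}]$) are handled implicitly by the paper in the same way.
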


The following is a special case of Theorem~$\ref{thm:9.3}$ \cite{Yamaguchi2017}. 

\begin{thm}[Theorem~$\ref{thm:1.7}$]\label{thm:9.4}
Let $G$ be a finite abelian group and let $H$ be a subgroup of $G$. 
Then, for every $h \in H$, there exists a homogeneous polynomial $c_{h} \in \mathbb{C}[x_{g}]$ such that $\deg{c_{h}} = [G:H]$ and 
$$
\Theta(G) 1_{G} = \prod_{\chi \in \widehat{H}} \sum_{h \in H} \chi(h) c_{h} h. 
$$
If $H = G$, then we can take $c_{h} = x_{h}$ for each $h \in H$. 
\end{thm}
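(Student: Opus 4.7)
The plan is to derive Theorem~$\ref{thm:9.4}$ as a direct specialization of Theorem~$\ref{thm:9.3}$. The latter already supplies the factorization $\Theta(G) 1_{G} = \prod_{\chi \in \widehat{H}} \chi(\Theta(G:H))$, so the remaining task is to expand $\Theta(G:H) \in B = \mathbb{C}[x_{g}]H$ in the natural basis as $\sum_{h \in H} c_{h} h$ and to verify that each $c_{h}$ is a homogeneous polynomial of degree $[G:H]$ in the variables $\{x_{g}\}$.

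To compute $\Theta(G:H)$ explicitly, I would fix coset representatives $g_{1} = 1_{G}, g_{2}, \ldots, g_{[G:H]}$ for $H$ in $G$; since $G$ is abelian these form a free $B$-basis of $A = \mathbb{C}[x_{g}]G$. Let $L \colon A \to \Mat([G:H], B)$ be the corresponding left regular representation, so by definition $\Theta(G:H) = (\Det_{\Mat([G:H], B)}^{B} \circ L)(\mathfrak{X}_{G})$. The key observation is that for each $g \in G$ the matrix $L(g)$ has entries in $\mathbb{C}H \subset B$ and is free of the polynomial variables, because the action of $g$ on the basis $\{g_{i}\}$ is determined purely by the group law in $G$. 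Hence $L(\mathfrak{X}_{G}) = \sum_{g \in G} x_{g} L(g)$ is a matrix whose every entry is a homogeneous polynomial of degree $1$ in $\{x_{g}\}$ with coefficients in $\mathbb{C}H$, and expanding the determinant as a signed sum of $[G:H]$-fold products of such entries shows that $\Theta(G:H)$ is homogeneous of degree $[G:H]$ in $\{x_{g}\}$ with coefficients in $\mathbb{C}H$. Writing $\Theta(G:H) = \sum_{h \in H} c_{h} h$ then exhibits polynomials $c_{h} \in \mathbb{C}[x_{g}]$, each either zero or homogeneous of exact degree $[G:H]$, as required.

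For each $\chi \in \widehat{H}$ I would then apply the extension $\sum_{h} c_{h} h \mapsto \sum_{h} c_{h} \chi(h) h$ of $\chi$ to an algebra endomorphism of $\mathbb{C}[x_{g}]H$ -- the extension implicit in the statement of Theorem~$\ref{thm:9.3}$ -- to obtain $\chi(\Theta(G:H)) = \sum_{h \in H} \chi(h) c_{h} h$. Substituting into Theorem~$\ref{thm:9.3}$ yields $\Theta(G) 1_{G} = \prod_{\chi \in \widehat{H}} \sum_{h \in H} \chi(h) c_{h} h$. For the special case $H = G$, we have $[G:H] = 1$, so $L$ is the identity map; hence $\Theta(G:G) = \mathfrak{X}_{G} = \sum_{h \in G} x_{h} h$ and $c_{h} = x_{h}$ for every $h \in G$, completing the proof.

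I do not anticipate a serious obstacle: the only genuinely new content beyond Theorem~$\ref{thm:9.3}$ is the homogeneity assertion, and this reduces to the elementary fact that $L(g) \in \Mat([G:H], \mathbb{C}H)$ does not involve the polynomial variables -- immediate since $L$ arises by $\mathbb{C}[x_{g}]$-linearly extending the classical left regular representation $\mathbb{C}G \to \Mat([G:H], \mathbb{C}H)$ of the group ring.
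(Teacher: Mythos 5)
Your route is the same as the paper's: the paper offers no independent proof of Theorem~\ref{thm:9.4}, stating only that it is a special case of Theorem~\ref{thm:9.3} (with a citation to the earlier work), and your proposal fills in exactly that specialization --- expand $\Theta(G:H)=\bigl(\Det_{\Mat([G:H],B)}^{B}\circ L\bigr)(\mathfrak{X}_{G})$ in the basis of coset representatives, observe that each $L(g)$ has entries in $\mathbb{C}H$ free of the variables, and conclude homogeneity of degree $[G:H]$. That computation is correct, as is the $H=G$ case.

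There is one loose end you flag but do not close: you conclude that each $c_{h}$ is ``either zero or homogeneous of exact degree $[G:H]$,'' whereas the statement asserts $\deg c_{h}=[G:H]$ for \emph{every} $h\in H$, which requires $c_{h}\neq 0$. This can be repaired as follows. With coset representatives $g_{1},\dots,g_{k}$ ($k=[G:H]$), the entry $L(\mathfrak{X}_{G})_{ij}$ involves only the variables $x_{g}$ with $g\in g_{i}Hg_{j}^{-1}$; for $i\neq j$ this set is disjoint from $H$, so in the Leibniz expansion of the determinant every permutation other than the identity contributes monomials containing at least one $x_{g}$ with $g\notin H$. The identity permutation contributes $\bigl(\sum_{h'\in H}x_{h'}h'\bigr)^{k}$, whose monomials involve only $\{x_{h'}\mid h'\in H\}$ and hence cannot be cancelled; the coefficient of each $h\in H$ in that power is $\sum_{h_{1}\cdots h_{k}=h}x_{h_{1}}\cdots x_{h_{k}}\neq 0$. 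Therefore $c_{h}\neq 0$ for all $h$, and your argument then gives the full statement.
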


From Theorems~$\ref{thm:5.3}$, $\ref{thm:9.1}$ and $\ref{thm:9.3}$, we obtain the following corollary: 

\begin{cor}[Corollary~$\ref{cor:1.8}$]\label{cor:10.5}
Let $G$ be a finite group and let $H$ be an abelian subgroup of $G$. 
Then, for all $\varphi \in \widehat{G}$, we have 
\begin{align*}
\deg{\varphi} \leq [G:H]. 
\end{align*}
\end{cor}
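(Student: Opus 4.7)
The plan is to apply the degree inequality (Theorem~\ref{thm:5.3}) directly in the group-algebra setup already established in this section. Take $A = \mathbb{C}[x_{g}]G$, $B = \mathbb{C}[x_{g}]H$, $C = \mathbb{C}[x_{g}]$, and let $\rho = L \colon A \to \Mat([G:H], B)$ be the left regular representation coming from the decomposition of $A$ as a free $B$-module of rank $[G:H]$ (choosing a left transversal of $H$ in $G$ as the basis $e$, and $H$ itself as the basis $f$, so that $ef$ corresponds to $G$ and $\mathfrak{X}$ to $\mathfrak{X}_{G}$). With this choice the composite $L_{3} \circ \rho$ is (equivalent to) the ordinary left regular representation $L_{G}$ of $G$ over $C$, while $L_{2} = L_{H}$ is the regular representation of $H$.

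Next, I would identify the direct-sum decompositions feeding into Theorem~\ref{thm:5.3}. By Theorem~\ref{thm:9.2}, $L_{G}$ decomposes as $\bigoplus_{\varphi \in \widehat{G}} (\deg \varphi) \cdot \varphi$, so the constituents $\varphi^{(i)}$ appearing in Theorem~\ref{thm:5.3} are precisely the elements of $\widehat{G}$. Frobenius' theorem (Theorem~\ref{thm:9.1}) asserts that each $\Det(\varphi(\mathfrak{X}_{G}))$ is an irreducible polynomial over $\mathbb{C}[x_{g}]$, which is exactly the irreducibility hypothesis required by Theorem~\ref{thm:5.3}. Since $H$ is abelian, $L_{H}$ decomposes into a direct sum of one-dimensional characters $\chi \in \widehat{H}$, and therefore $\max_{j} \deg \psi^{(j)} = 1$.

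Finally, I would compute $\deg \rho$. The matrix $L(\mathfrak{X}_{G}) \in \Mat([G:H], B)$ has every entry homogeneous of degree $1$ in the variables $\{ x_{g} \}$ (each entry is a $\mathbb{C}H$-linear combination of the $x_{g}$). Hence $\Det(L(\mathfrak{X}_{G})) = \Theta(G:H)$ is homogeneous of $x_{g}$-degree $[G:H]$, so $\deg \rho = [G:H]$; the same conclusion can be extracted from Theorem~\ref{thm:9.3} by comparing $x_{g}$-degrees on the two sides of $\Theta(G) 1_{G} = \prod_{\chi} \chi(\Theta(G:H))$. Plugging these numerical inputs into Theorem~\ref{thm:5.3} yields $\deg \varphi \leq 1 \cdot [G:H] = [G:H]$ for every $\varphi \in \widehat{G}$. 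There is no genuinely difficult step in this argument; the only bookkeeping that needs attention is checking that $L_{3} \circ \rho$ really is equivalent to $L_{G}$ and that $\bar{C} = \mathbb{C}[x_{g}]$ is a legitimate common ring of definition for all the $\varphi^{(i)}$ and $\psi^{(j)}$.
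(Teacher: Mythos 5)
Your proposal is correct and follows essentially the same route as the paper, which derives the corollary from Theorem~5.3 (the degree inequality), Theorem~9.1 (Frobenius' theorem, supplying the irreducibility hypothesis), and Theorem~9.3 / the Section~10 setup (identifying $L_{G}$ with $L'\circ L$, the constituents $\varphi^{(i)}$ with $\widehat{G}$, the $\psi^{(j)}$ with the degree-one characters of the abelian group $H$, and $\deg\rho = [G:H]$). The paper states the corollary without writing out these details; your proposal is a faithful expansion of exactly that argument.
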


Note that Corollary~$\ref{cor:10.5}$ follows from Frobenius reciprocity, 
and it is known that if $H$ is an abelian normal subgroup of $G$, then $\deg{\varphi}$ divides $[G:H]$ for all $\varphi \in \widehat{G}$ (see, e.g, \cite{kondo2011group}).

\section{Future work}
There are several noncommutative determinants, and some of their relationships are known.
For example, 
the following is known (see, e.g., \cite{Aslaksen1996}, \cite{Kyrchei2008}, and \cite{Kyrchei-inbook}):
 
For any $M \in \Mat(r, \mathbb{H})$, we have 
$$
\Sdet{\left( M \right)} = \Mdet{\left(M M^{*} \right)} = \left( \Ddet{(M)} \right)^{2}, 
$$
where $\Mdet$ and $\Ddet$ are the determinant of the Moore (see, e.g., \cite{Aslaksen1996}, \cite{dyson1972quaternion}, and \cite{Kyrchei-inbook}) and of the Dieudonn\'{e} (\cite{artin2016geometric} and \cite{zbMATH03044809}), respectively. 

In the future, we study relations with other noncommutative determinants, 
for example, 
Capelli identities (see, e.g., \cite{Capelli1890}, \cite{itoh2001central}, \cite{Umeda2008}, and \cite{doi:10.1080/03081087.2017.1382443}), 
the determinant of Chen~\cite{Longxuan1991}, 
quasideterminant~\cite{Gel'fand1991}, 
the determinant of Kyrchei~\cite{Kyrchei2008} and \cite{Kyrchei-inbook}, 
the determinant of matrices of pseudo-differential operators~\cite{sato1975}, etc.

\clearpage

\thanks{Acknowledgments:}
I am deeply grateful to Professor Hiroyuki Ochiai, who provided the helpful comments and suggestions. 
Also, I would like to thank my colleagues in the Graduate School of Mathematics at Kyushu University, 
in particular Yuka Yamaguchi, for comments and suggestions. 
This work was supported by a grant from the Japan Society for the Promotion of Science (JSPS KAKENHI Grant Number 15J06842).

\bibliographystyle{plain}
\bibliography{reference}

\medskip
\begin{flushleft}
Naoya Yamaguchi\\
Office for Establishment of an Information-related School\\
Nagasaki University\\
1-14 Bunkyo, Nagasaki City 852-8521 \\
Japan\\
yamaguchi@nagasaki-u.ac.jp
\end{flushleft}

\end{document}